\numberwithin{equation}{section}
\renewcommand{\MR}[1]{} 
\theoremstyle{plain}
	\newtheorem{theorem}{Theorem}[section]
	\newtheorem{lemma}[theorem]{Lemma}
	\newtheorem{proposition}[theorem]{Proposition}
	\newtheorem{corollary}[theorem]{Corollary}
\theoremstyle{definition}
	\newtheorem{definition}[theorem]{Definition}
	\newtheorem{example}[theorem]{Example}
	\newtheorem{remark}[theorem]{Remark}
\newcommand{\N}{\mathbb{N}}
\newcommand{\R}{\mathbb{R}}
\newcommand{\de}{\partial}
\newcommand{\eps}{\varepsilon}
\newcommand{\closure}[2][3]{%
  {}\mkern#1mu\overline{\mkern-#1mu#2}}
\newcommand{\weakto}{\rightharpoonup}
\newcommand{\res}{\mathop{\hbox{\vrule height 7pt width .5pt depth 0pt
\vrule height .5pt width 6pt depth 0pt}}\nolimits}
\newcommand{\Haus}[1]{\mathscr{H}^{#1}}
\newcommand{\Leb}[1]{\mathscr{L}^{#1}} 
\newcommand{\redb}{\mathscr{F}}
\newcommand{\M}{\mathcal{M}}
\newcommand{\DM}{\mathcal{DM}}
\newcommand{\di}{\,\mathrm{d}}
\newcommand{\divm}{{div}}
\newcommand*{\mint}[1]{%
  \mint@l{#1}{}%
}
\newcommand*{\mint@l}[2]{%
  \@ifnextchar\limits{%
    \mint@l{#1}%
  }{%
    \@ifnextchar\nolimits{%
      \mint@l{#1}%
    }{%
      \@ifnextchar\displaylimits{%
        \mint@l{#1}%
      }{%
        \mint@s{#2}{#1}%
      }%
    }%
  }%
}
\newcommand*{\mint@s}[2]{%
  \@ifnextchar_{%
    \mint@sub{#1}{#2}%
  }{%
    \@ifnextchar^{%
      \mint@sup{#1}{#2}%
    }{%
      \mint@{#1}{#2}{}{}%
    }%
  }%
}
\def\mint@sub#1#2_#3{%
  \@ifnextchar^{%
    \mint@sub@sup{#1}{#2}{#3}%
  }{%
    \mint@{#1}{#2}{#3}{}%
  }%
}
\def\mint@sup#1#2^#3{%
  \@ifnextchar_{%
    \mint@sup@sub{#1}{#2}{#3}%
  }{%
    \mint@{#1}{#2}{}{#3}%
  }%
}
\def\mint@sub@sup#1#2#3^#4{%
  \mint@{#1}{#2}{#3}{#4}%
}
\def\mint@sup@sub#1#2#3_#4{%
  \mint@{#1}{#2}{#4}{#3}%
}
\newcommand*{\mint@}[4]{%
  \mathop{}%
  \mkern-\thinmuskip
  \mathchoice{%
    \mint@@{#1}{#2}{#3}{#4}%
        \displaystyle\textstyle\scriptstyle
  }{%
    \mint@@{#1}{#2}{#3}{#4}%
        \textstyle\scriptstyle\scriptstyle
  }{%
    \mint@@{#1}{#2}{#3}{#4}%
        \scriptstyle\scriptscriptstyle\scriptscriptstyle
  }{%
    \mint@@{#1}{#2}{#3}{#4}%
        \scriptscriptstyle\scriptscriptstyle\scriptscriptstyle
  }%
  \mkern-\thinmuskip
  \int#1%
  \ifx\\#3\\\else_{#3}\fi
  \ifx\\#4\\\else^{#4}\fi  
}
\newcommand*{\mint@@}[7]{%
  \begingroup
    \sbox0{$#5\int\m@th$}%
    \sbox2{$#5\int_{}\m@th$}%
    \dimen2=\wd0 %
    \let\mint@limits=#1\relax
    \ifx\mint@limits\relax
      \sbox4{$#5\int_{\kern1sp}^{\kern1sp}\m@th$}%
      \ifdim\wd4>\wd2 %
        \let\mint@limits=\nolimits
      \else
        \let\mint@limits=\limits
      \fi
    \fi
    \ifx\mint@limits\displaylimits
      \ifx#5\displaystyle
        \let\mint@limits=\limits
      \fi
    \fi
    \ifx\mint@limits\limits
      \sbox0{$#7#3\m@th$}%
      \sbox2{$#7#4\m@th$}%
      \ifdim\wd0>\dimen2 %
        \dimen2=\wd0 %
      \fi
      \ifdim\wd2>\dimen2 %
        \dimen2=\wd2 %
      \fi
    \fi
    \rlap{%
      $#5%
        \vcenter{%
          \hbox to\dimen2{%
            \hss
            $#6{#2}\m@th$%
            \hss
          }%
        }%
      $%
    }%
  \endgroup
}
\renewcommand{\div}{\mathrm{div}}
\renewcommand{\phi}{\varphi}
\renewcommand{\rho}{\varrho}
\renewcommand{\theta}{\vartheta}
\DeclareMathOperator{\supp}{supp}
\DeclareMathOperator{\Lip}{Lip}
\DeclareMathOperator{\loc}{loc}
\DeclarePairedDelimiter{\set}{\{}{\}}
\DeclarePairedDelimiter{\abs}{|}{|}
\mathchardef\ordinarycolon\mathcode`\:
\begin{document}

\title[Fractional divergence-measure fields]{Fractional divergence-measure fields, Leibniz rule and Gauss--Green formula}

\author[G.~E.~Comi]{Giovanni E. Comi}
\address[G.~E.~Comi]{Dipartimento di Matematica, Università di Bologna, Piazza di Porta San Donato 5, 40126 Bologna (BO), Italy}
\email{giovannieugenio.comi@unibo.it}

\author[G.~Stefani]{Giorgio Stefani}
\address[G.~Stefani]{Scuola Internazionale Superiore di Studi Avanzati (SISSA), via Bonomea~265, 34136 Trieste (TS), Italy}
\email{giorgio.stefani.math@gmail.com or gstefani@sissa.it}

\thanks{\textit{Acknowledgments}.
The authors are members of  the Istituto Nazionale di Alta Matematica (INdAM), Gruppo Nazionale per l'Analisi Matematica, la Probabilità e le loro Applicazioni (GNAMPA), and are partially supported by the INdAM--GNAMPA 2023 Project \textit{Problemi variazionali per funzionali e operatori non-locali}, codice CUP\_E53\-C22\-001\-930\-001.
The first-named author is partially supported by the INdAM--GNAMPA 2022 Project \textit{Alcuni problemi associati a funzionali integrali: riscoperta strumenti classici e nuovi sviluppi}, codice CUP\_E55\-F22\-000\-270\-001, and has received funding from the MIUR PRIN 2017 Project ``Gradient Flows, Optimal Transport and Metric Measure Structures''.
The second-named author is partially supported by the INdAM--GNAMPA 2022 Project \textit{Analisi geometrica in strutture subriemanniane}, codice CUP\_E55\-F22\-000\-270\-001, and has received funding from the European Research Council (ERC) under the European Union’s Horizon 2020 research and innovation program (grant agreement No.~945655).
}

\date{\today}

\keywords{Fractional divergence-measure fields, fractional calculus, Leibniz rule, Gauss--Green formula, Hausdorff measure.}

\subjclass[2010]{Primary 26A33. Secondary 26B20, 26B30}

\begin{abstract}
Given $\alpha\in(0,1]$ and $p\in[1,+\infty]$, we define the space $\mathcal{DM}^{\alpha,p}(\mathbb R^n)$ of $L^p$ vector fields whose $\alpha$-divergence is a finite Radon measure, extending the theory of divergence-measure vector fields to the distributional fractional setting.  
Our main results concern the absolute continuity properties of the $\alpha$-divergence-measure with respect to the Hausdorff measure and fractional analogues of the Leibniz rule and the Gauss--Green formula.
The sharpness of our results is discussed via some explicit examples.
\end{abstract}

\maketitle


\section{Introduction}

\subsection{The classical framework}

The theory of \emph{divergence-measure fields} in the Euclidean space naturally emerged as the appropriate setting for the study of minimal regularity conditions allowing for integration-by-parts and Gauss--Green formulas.
Since Anzellotti's seminal paper~\cite{Anzellotti84}, several fundamental results have been established in the last 20 years, such as Leibniz rules for divergence-measure fields and suitably weakly differentiable scalar functions, well-posedness of generalized normal traces on rectifiable sets, and in\-te\-gra\-tion-by-parts formulas under minimal regularity assumptions, see~\cites{Ambrosio-Crippa-Maniglia05,Chen-et-al19,Chen-Frid99,Chen-Frid01,comi2022representation,Comi-Payne20,Crasta-DeCicco19-Anzellotti,Crasta-et-al22,CD5,Crasta-DeCicco19-An,Frid12,Silhavy09,Silhavy07,Silhavy19}.
Since its beginning, the theory of divergence-measure fields have found numerous applications in several areas, including Continuum Mechanics~\cites{Silhavy05,Degiovanni-et-al99,Schu}, hyperbolic systems of conservation laws~\cites{Ambrosio-Crippa-Maniglia05,Chen-Frid99,DeLellis-Otto-Westdickenberg03,ChTo}, gas dynamic~\cites{Chen-Frid03} and Dirichlet problems for the $1$-Laplacian operator and prescribed mean curvature-type equations \cite{Cicalese-Trombetti03,Kawhol-Schuricht07,leonardi2023prescribed,MR3813962,Scheven-Schmidt16,Leonardi-Saracco22,MR2502520,MR3939259,MR4241342,MR3978950}, just to name a few.
For recent extensions to non-Euclidean frameworks, we refer to \cite{brue2022constancy,ComiMagna,BuffaComiMira}.

The basic definition goes as follows (see \cref{subsec:notation} for the notation).
Given $p\in[1,+\infty]$, we say that a vector field $F\in L^p(\R^n;\R^n)$ has divergence-measure, and we write $F\in\DM^{1,p}(\R^n)$, if there exists a finite Radon measure $\divm F\in\M(\R^n)$ such that 
\begin{equation}
\label{eq:ibp_classical}
\int_{\R^n}F\cdot\nabla\xi\di x
=
-
\int_{\R^n}\xi\di\divm F
\end{equation} 
for all $\xi \in C^{\infty}_{c}(\R^n)$.
The integration-by-parts formula~\eqref{eq:ibp_classical} clearly generalizes the usual Divergence Theorem.
In fact, if the vector field~$F$ is sufficiently regular, say $F\in\Lip_{\loc}(\R^n;\R^n)$, then $\divm F=\div F\,\Leb{n}$ in~\eqref{eq:ibp_classical}, where $\Leb{n}$ is the $n$-dimensional measure.

As for the analogous case of functions with bounded variation, the two principal questions regarding $\DM^{1,p}$ vector fields concern the absolute continuity properties of the divergence-measure with respect to the Hausdorff measure $\Haus{s}$, for $s\in[0,n]$, and the well-posedness of a Leibniz rule with suitable scalar functions. 

The absolute continuity properties of the divergence-measure of a $\DM^{1,p}$ vector field with respect to the Hausdorff measure hold as follows, see~\cite{Silhavy05}*{Th.~3.2 and Exam.~3.3}.

\begin{theorem}[Absolute continuity properties of the divergence-measure]
\label{res:abs_div_classical}
Assume that $F \in \DM^{1, p}(\R^{n})$ with $p\in[1,+\infty]$. 
We have the following cases:
\begin{enumerate}[label=(\roman*),ref=\roman*,topsep=1ex,itemsep=1ex,leftmargin=6ex]
\item\label{item:classical_subcritical} 
if $p \in \left [1, \frac{n}{n - 1} \right )$, then $\divm\, F$ does not enjoy any absolute continuity property;

\item 
if $p \in \left [\frac{n}{n - 1}, + \infty \right )$, then $|\divm F|(B) = 0$ on Borel sets $B$ of $\sigma$-finite $\Haus{n - \frac{p}{p - 1}}$ measure;

\item if $p = + \infty$, then $|\divm F| \ll \Haus{n - 1}$. 
\end{enumerate}

\end{theorem}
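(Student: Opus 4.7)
My plan is to prove part~(i) by an explicit example and parts~(ii) and (iii) by a common cutoff-and-duality argument combined with a Hahn--Jordan reduction.

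For part~(i), I take the truncated Coulomb field $F(x) := \phi(x)\,x/|x|^n$ with $\phi\in C^\infty_c(\R^n)$ equal to $1$ near the origin. Since $|F(x)|^p = \phi(x)^p/|x|^{p(n-1)}$, the field $F$ lies in $L^p(\R^n;\R^n)$ precisely when $p < n/(n-1)$. The classical identity $\divm(x/|x|^n) = c_n\,\delta_0$ combined with the smooth Leibniz rule yields $\divm F = c_n\,\delta_0 + g\,\Leb{n}$ with $g$ bounded, so $|\divm F|$ carries positive mass at the single point $\{0\}$, which is $\Haus{s}$-negligible for every $s > 0$, showing that no absolute continuity property can hold.

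For parts~(ii) and~(iii), set $\mu := \divm F$ and $p' := p/(p-1)$ (with $p'=1$ when $p=\infty$). By $\sigma$-additivity and inner regularity of $|\mu|$, it suffices to prove $|\mu|(K)=0$ for every compact $K$ with $\Haus{n-p'}(K)<\infty$ (case~(ii)) or with $\Haus{n-1}(K)=0$ (case~(iii)). Arguing by contradiction and using the Hahn--Jordan decomposition together with the inner regularity of~$\mu^{\pm}$, I pick a compact subset of $K$ on which $\mu$ is nonnegative with $\mu(K)>0$ and relabel it as $K$. For $\eps>0$, I then select a finite cover of $K$ by balls $B(x_i,r_i)$ with $r_i<\eps$ and $\sum_i r_i^{n-p'}\leq M$, where $M\leq C(\Haus{n-p'}(K)+1)$ in case~(ii) and $M$ can be taken arbitrarily small in case~(iii); setting $U := \bigcup_i B(x_i,2r_i)$ and gluing standard smooth cutoffs $\phi_i$ of $B(x_i,r_i)$ inside $B(x_i,2r_i)$ by a suitable (mollified) maximum yields a Lipschitz test function~$\xi$ with $0\leq \xi\leq 1$, $\xi\equiv 1$ on $K$, $\supp\xi\subset U$ and $\|\nabla\xi\|_{L^{p'}}^{p'}\leq C\sum_i r_i^{n-p'}\leq CM$.

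Testing~\eqref{eq:ibp_classical} against $\xi$ (extended to Lipschitz functions by density) together with H\"older's inequality gives
\[
\left|\int_{\R^n}\xi\di\mu\right|
=
\left|\int_{\R^n}F\cdot\nabla\xi\di x\right|
\leq
\|F\|_{L^p(U)}\,\|\nabla\xi\|_{L^{p'}},
\]
while $\mu\geq 0$ on $K$, $\xi\equiv 1$ on $K$ and $0\leq\xi\leq 1$ yield $\mu(K)\leq \int_{\R^n}\xi\di\mu + |\mu|(U\setminus K)$. As $\eps\to 0$, in case~(ii) we have $\Leb{n}(U)\lesssim \eps^{p'}M \to 0$, so $\|F\|_{L^p(U)}\to 0$ by absolute continuity of the integral with $\|\nabla\xi\|_{L^{p'}}$ bounded, while in case~(iii) one uses instead $\|\nabla\xi\|_{L^1}\leq CM \to 0$ with $\|F\|_{L^\infty}$ fixed; simultaneously $|\mu|(U\setminus K)\to 0$ since $U$ shrinks to $K$ and $|\mu|$ is finite. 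This forces $\mu(K)\leq 0$, a contradiction. The central obstacle is that~\eqref{eq:ibp_classical} only controls the signed pairing $\int \xi \di \mu$ rather than the total variation $|\mu|(K)$; the Hahn--Jordan reduction above is essential to turn a capacity-type bound on $\mu(K)$ into an absolute continuity statement for $|\mu|$.
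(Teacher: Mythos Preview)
Your argument is correct. Note, however, that the paper does not give its own proof of this classical statement: it cites \cite{Silhavy05} for parts~(ii) and~(iii), and only addresses part~(i) explicitly, via Example~\ref{exa:F_int} (the $\alpha=1$ limit of Example~\ref{exa:F_frac}).

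For part~(i), your truncated Coulomb field and the paper's difference of two untruncated Coulomb fields
\[
F_{y,z,1}(x)=\mu_{n,-1}\left(\frac{x-y}{|x-y|^{n}}-\frac{x-z}{|x-z|^{n}}\right)
\]
are minor variants of the same construction. Your cutoff version has the small advantage of lying in $\DM^{1,1}(\R^n)$ globally, whereas the paper only obtains $F_{y,z,1}\in\DM^{1,1}_{\loc}(\R^n)$ at the endpoint $p=1$.

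For parts~(ii)--(iii), your direct capacity argument differs in execution from the approach the paper takes for the fractional analogue (which mirrors \v{S}ilhav\'y's). There one first establishes a ball-decay estimate $|\divm F|(B_r(x))\le C\,\|F\|_{L^p}\,r^{n-p'}$ for $|\divm F|$-a.e.\ $x$, handling the sign via the polar decomposition and an integration-by-parts on a single ball with a tailored test function (see Lemma~\ref{res:decay_estimate_pos} and Theorem~\ref{res:decay_estimate_p}), and then invokes the density theorem \cite{AFP00}*{Th.~2.56} to deduce absolute continuity (Corollary~\ref{res:abs_continuity_p}). You instead cover the compact set by many small balls, assemble one global test function, and apply the duality inequality once; your Hahn--Jordan reduction plays the same role as the polar decomposition in the paper. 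The decay-and-density route yields the stronger quantitative conclusion (an upper density bound, hence an inequality $|\divm F|\le C\,\|F\|_{L^p}\,\Haus{n-p'}$ off a null set), while your argument is more self-contained in that it avoids the density theorem and delivers the absolute continuity directly.
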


The Leibniz rule involving $\DM^{1,p}$ vector fields and Sobolev functions is stated in \cref{res:leibniz_classical} below, for which we refer to~\cite{Chen-et-al19}*{Prop.~3.1}, \cite{Chen-Frid99}*{Th. 3.1}, \cite{ComiPhD}*{Th.~3.2.3} and~\cite{Frid14}*{Th.~2.1}.
Here and in the following, for $x\in\R^n$, we let 
\begin{equation}
\label{eq:precise_repres}
g^\star(x)
=
\begin{cases}
\displaystyle\lim_{r\to0^+}\mint{-}_{B_r(x)}g(y)\di y
&
\text{if the limit exists},
\\[2ex]
0
&
\text{otherwise},
\end{cases}
\end{equation}
be the \emph{precise representative} of $g\in L^1_{\loc}(\R^n)$.
For the notion of \emph{(Anzellotti's) pairing measure} briefly recalled in the statement, we refer the reader to~\cite{Anzellotti84}*{Def. 1.4}, \cite{Chen-Frid99}*{Th.~3.2}, or to \cite{Crasta-DeCicco19-Anzellotti}*{Sec. 2.5} for a more general formulation.

\begin{theorem}[Leibniz rule for $\DM^{1,p}$ vector fields and weakly differentiable functions]
\label{res:leibniz_classical}
Let $p,q\in[1,+\infty]$ be such that $\frac1p+\frac1q=1$. If $F \in \DM^{1, p}(\R^n)$ and 
\begin{equation*}
g \in \begin{cases} L^{\infty}(\R^n) \cap W^{1,q}(\R^n) & \text{ for } p \in [1, + \infty),\\[1ex]
L^{\infty}(\R^n) \cap BV(\R^n) & \text{ for } p = + \infty,
\end{cases}
\end{equation*}
then $gF \in \DM^{1,r}(\R^n)$ for all $r \in [1,p]$, with
\begin{equation} \label{eq:leibniz_rule_classical_p} 
\divm(gF) = g^{\star}\, \divm F + (F, Dg)_q 
\quad
\text{in}\ \M(\R^n). \end{equation}
Here\begin{equation*}
(F, Dg)_q = \begin{cases} F \cdot \nabla g \, \Leb{n} 
& 
\text{if}\ q>1,\ \text{or}\ q = 1\ \text{and}\ g \in L^\infty(\R^n) \cap W^{1,1}(\R^n), \\[1ex]
(F, Dg) 
& 
\text{if}\ q = 1\ \text{and}\ g \in L^\infty(\R^n) \cap ( BV(\R^n) \setminus W^{1,1}(\R^n)),
\end{cases}
\end{equation*}
is the \emph{pairing measure} between $F$ and~$Dg$, where $(F,Dg)$ is the unique weak limit 
\begin{equation*}
F\cdot\nabla(\rho_\eps*g)\,\Leb{n}
\weakto
(F,Dg)
\quad
\text{in}\ \M(\R^n)\
\text{as}\ \eps\to0^+,
\end{equation*}
being $\rho_\eps=\eps^{-n}\rho\left(\frac{\cdot}{\eps}\right)$ for $\eps>0$, with $\rho\in C^\infty_c(\R^n)$ any non-negative radially symmetric function such that $\supp\rho\subset B_1$ and $\int_{B_1}\rho\di x=1$. 

\end{theorem}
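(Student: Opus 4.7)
The plan is to derive~\eqref{eq:leibniz_rule_classical_p} by mollifying $g$ and passing to the limit, with \cref{res:abs_div_classical} entering at the crucial step. First I would set $g_\eps:=\rho_\eps*g$, so that $g_\eps\in C^\infty(\R^n)$ with $\|g_\eps\|_{L^\infty}\le\|g\|_{L^\infty}$. For $\xi\in C^\infty_c(\R^n)$, the product $g_\eps\xi$ is Lipschitz with compact support, hence an admissible test function for $\divm F$ by a standard density extension of~\eqref{eq:ibp_classical} from $C^\infty_c$ to $W^{1,\infty}_c$. Expanding $\nabla(g_\eps\xi)=g_\eps\nabla\xi+\xi\nabla g_\eps$ then yields the smooth Leibniz identity
\begin{equation*}
\divm(g_\eps F)=g_\eps\,\divm F+F\cdot\nabla g_\eps\,\Leb{n}
\quad\text{in}\ \mathcal{D}'(\R^n).
\end{equation*}

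Next I would pass to the limit $\eps\to 0^+$ term by term. Testing against $\xi\in C^\infty_c(\R^n)$, dominated convergence (using $|g_\eps|\le\|g\|_{L^\infty}$ and $g_\eps\to g$ a.e.) gives $\int g_\eps F\cdot\nabla\xi\di x\to\int gF\cdot\nabla\xi\di x$, so the left-hand side converges to $\divm(gF)$ in $\mathcal{D}'(\R^n)$. For the gradient term on the right, when $q\in(1,+\infty]$ or $g\in W^{1,1}(\R^n)$, one has $\nabla g_\eps\to\nabla g$ in $L^q(\R^n)$, and H\"older's inequality yields $\int\xi F\cdot\nabla g_\eps\di x\to\int\xi F\cdot\nabla g\di x$. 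In the residual case $p=+\infty$ with $g\in BV(\R^n)\setminus W^{1,1}(\R^n)$, the measures $F\cdot\nabla g_\eps\,\Leb{n}$ are uniformly bounded in total variation by $\|F\|_{L^\infty}|Dg|(\R^n)$, and convergence of the other two terms forces them to converge weakly-* to a unique limit independent of $\rho$, which I would take as the definition of $(F,Dg)$.

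The heart of the argument, and the main obstacle, is the convergence $g_\eps\,\divm F\weakstarto g^\star\,\divm F$ in $\M(\R^n)$. By dominated convergence against the finite measure $\divm F$, this reduces to showing $g_\eps(x)\to g^\star(x)$ at $|\divm F|$-almost every $x\in\R^n$, and it is here that the absolute continuity properties recalled in \cref{res:abs_div_classical} become essential. For $g\in W^{1,q}(\R^n)\cap L^\infty(\R^n)$ with $q>1$, the exceptional set has $(1,q)$-capacity zero, hence is $\sigma$-finite with respect to $\Haus{n-q}=\Haus{n-\frac{p}{p-1}}$, which by \cref{res:abs_div_classical}(ii) is $|\divm F|$-negligible. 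For $g\in BV(\R^n)\cap L^\infty(\R^n)$ and $p=+\infty$, the radial symmetry of $\rho$ ensures $g_\eps\to g^\star$ pointwise $\Haus{n-1}$-almost everywhere---including at jump points, where $g^\star$ is the symmetric average of the one-sided traces---while \cref{res:abs_div_classical}(iii) provides $|\divm F|\ll\Haus{n-1}$.

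The interplay between the scale $n-q$ at which the fine properties of $g$ live and the absolute-continuity scale for $|\divm F|$ is precisely what forces the conjugacy $\frac{1}{p}+\frac{1}{q}=1$ in the hypotheses; once the three limits above are in place, combining them yields the identity~\eqref{eq:leibniz_rule_classical_p}, while $gF\in L^p(\R^n)$ provides the required integrability of the product field.
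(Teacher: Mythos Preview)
The paper does not itself prove \cref{res:leibniz_classical}; it is quoted from the literature (with references to~\cite{Chen-et-al19}*{Prop.~3.1}, \cite{Chen-Frid99}*{Th.~3.1}, \cite{ComiPhD}*{Th.~3.2.3} and~\cite{Frid14}*{Th.~2.1}), so there is no in-paper argument to compare against. Your overall strategy---mollify $g$, write the smooth Leibniz identity, and pass to the limit using the fine properties of $g$ together with the absolute continuity of $|\divm F|$---is exactly the standard route taken in those references, and your handling of the case $p=+\infty$ is essentially correct.

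There is, however, a genuine gap in the range $p\in\big[\tfrac{n}{n-1},+\infty\big)$. You assert that the exceptional set for $g^\star$ has $(1,q)$-capacity zero, ``hence is $\sigma$-finite with respect to $\Haus{n-q}$'', and then invoke \cref{res:abs_div_classical}(ii). The implication is the wrong way round: the standard comparison says that $\Haus{n-q}$-finite (hence $\Haus{n-q}$-$\sigma$-finite) sets have zero $(1,q)$-capacity, not conversely; sets of zero $(1,q)$-capacity need not be $\sigma$-finite for $\Haus{n-q}$. Thus \cref{res:abs_div_classical}(ii) as stated does not let you conclude that the exceptional set is $|\divm F|$-null. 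What the cited references actually establish and use is the sharper fact that $|\divm F|$ vanishes on sets of zero $(1,q)$-capacity whenever $F\in\DM^{1,p}$ with $p\ge\tfrac{n}{n-1}$, obtained directly by extending the duality~\eqref{eq:ibp_classical} to $W^{1,q}\cap L^\infty$ test functions and exploiting the capacitary characterisation (cf.\ the Remark following \cref{res:leibniz_classical} and~\cite{ComiPhD}*{Sec.~3.2}). You should also single out the subcritical regime $p\in\big[1,\tfrac{n}{n-1}\big)$: there $q>n$, Morrey's embedding makes $g$ continuous, $g_\eps\to g^\star$ locally uniformly, and no absolute-continuity property of $\divm F$ is required---consistently with \cref{res:abs_div_classical}(i), which provides none.
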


\begin{remark}[Choice of $g^\star$ in~\eqref{eq:leibniz_rule_classical_p} for $p<+\infty$]
For $p<+\infty$, the function $g^{\star}$ appearing in~\eqref{eq:leibniz_rule_classical_p} can be defined in a more specific way.
For $p \in \left [1, \frac{n}{n - 1} \right )$, $g^\star$ can be taken as the continuous representative of $g$.
Instead, for $p \in \left [ \frac{n}{n - 1}, + \infty \right )$, $g^\star$ can be taken as the \textit{$q$-quasicontinuous representative} of $g$.
See~\cite{ComiPhD}*{Sec.~3.2} for a more detailed discussion.
\end{remark}

\subsection{Fractional divergence-measure fields}

The aim of the present note is to introduce a fractional analogue of the theory of divergence-measure fields, following the distributional approach to fractional spaces recently introduced and studied by the authors and collaborators in the series of papers~\cite{Brue-et-al20,Comi-Stefani19,Comi-et-al21,Comi-Stefani22-L,Comi-Stefani22-A,Comi-Stefani23,Comi-Stefani23-On}.
For results close to the main topic of this paper, we also refer to~\cites{Silhavy22,Liu-Xiao22,Silhavy20}, even though our point of view is different.

In the fractional setting, for $\alpha\in(0,1)$, one has the integration-by-parts formula 
\begin{equation}
\label{eq:ibp_frac}
\int_{\R^n}F\cdot\nabla^\alpha\xi\di x
=
-
\int_{\R^n}\xi\,\div^\alpha F\di x
\end{equation} 
for all functions $\xi\in \Lip_c(\R^n)$ and vector fields $F\in \Lip_c(\R^n;\R^n)$, where
\begin{equation}
\label{eq:frac_nabla}
\nabla^\alpha \xi(x)
=
\mu_{n,\alpha}
\int_{\mathbb{R}^n}\frac{(\xi(y)-\xi(x))(y-x)}{|y-x|^{n+\alpha+1}}\di y,
\quad
x\in\R^n,
\end{equation}
is the \emph{fractional $\alpha$-gradient}, 
\begin{equation}
\label{eq:frac_div}
\div^\alpha F(x)
=
\mu_{n,\alpha}\int_{\R^n}\frac{(F(y)-F(x))\cdot(y-x)}{|y-x|^{n+\alpha+1}} \di y,
\quad
x\in\R^n,
\end{equation}
is the \emph{fractional $\alpha$-divergence}, and 
\begin{equation*}
\mu_{n, \alpha} 
= 
2^{\alpha}\, \pi^{- \frac{n}{2}}\, \frac{\Gamma\left ( \frac{n + \alpha + 1}{2} \right )}{\Gamma\left ( \frac{1 - \alpha}{2} \right )}
\end{equation*}
is a renormalization constant, see~\cite{Comi-Stefani19}*{Sec.~2.2} for a detailed exposition.
According to the main results of~\cites{Brue-et-al20,Comi-Stefani22-A}, with a slight (but justified) abuse of notation, we may identify~\eqref{eq:frac_nabla} with the usual gradient~$\nabla$ for $\alpha=1$, and with the vector-valued Riesz transform $\nabla^0=R$ for $\alpha=0$ (see \cref{subsec:notation} for the definition).

As already done by the authors in the case of scalar functions, the basic idea is now to use formula~\eqref{eq:ibp_frac} to define a fractional analogue of the divergence-measure~\eqref{eq:ibp_classical}.

\begin{definition}[$\DM^{\alpha,p}$ vector fields]
\label{def:dm_field}
Let $\alpha \in (0,1]$ and $p\in[1,+\infty]$. 
A vector field $F \in L^{p}(\R^n; \R^{n})$ has \emph{fractional $\alpha$-divergence-measure}, and we write $F \in \DM^{\alpha, p}(\R^n)$, if 
\begin{equation*}
\sup\set*{\int_{\R^n} F \cdot \nabla^{\alpha} \xi \di x : \xi\in C^\infty_c(\R^n),\ \|\xi\|_{L^\infty(\R^n)}\le1}<+\infty.
\end{equation*}
\end{definition}

The case $\alpha = 1$ in \cref{def:dm_field} corresponds to classical divergence-measure fields.
Without loss of generality, we always assume $n \ge 2$, since for $n=1$ one clearly identifies $\DM^{\alpha, p}(\R) = BV^{\alpha,p}(\R)$, the space of $L^p$ functions with finite totale fractional $\alpha$-variation, see the aforementioned~\cite{Comi-et-al21,Comi-Stefani22-L,Comi-Stefani23} for an extensive presentation of $BV^{\alpha,p}$ functions on $\R^n$. 
We also observe that $BV^{\alpha,p}(\R^n; \R^n) \subset \DM^{\alpha,p}(\R^n)$ for $n\ge2$, with strict inclusion at least in the case $p \in \left [1, \frac{n}{n-\alpha} \right )$, due to the fact that the vector fields in \cref{exa:F_frac} below cannot belong to $BV^{\alpha,p}(\R^n; \R^n)$, in the light of \cite{Comi-et-al21}*{Th.~1}.

Similarly to the case of $BV^{\alpha,p}$ functions (see \cite{Comi-Stefani19}*{Th.~3.2} and \cite{Brue-et-al20}*{Th.~5}), we can state the following structural result for $\DM^{\alpha, p}$ vector fields. 
The proof is very similar to the one of \cite{Comi-Stefani19}*{Th.~3.2} and is therefore omitted.

\begin{theorem}[Structure Theorem for $\DM^{\alpha, p}$ vector fields]\label{res:structure_frac_DM}
Let $\alpha \in (0,1]$ and $p \in [1, + \infty]$.
A vector field $F \in L^{p}(\R^{n};\R^n)$ belongs to $\DM^{\alpha,p}(\R^n)$ if and only if there exists a finite Radon measure $\divm^{\alpha} F \in \M(\R^n)$ such that 
\begin{equation}\label{eq:DM_alpha_p_duality} 
\int_{\R^n} F \cdot \nabla^{\alpha} \xi \di x= - \int_{\R^n} \xi \di \divm^{\alpha} F
\end{equation}
for all $\xi \in C^{\infty}_{c}(\R^n)$. 
In addition, for any open set $U\subset\R^n$, it holds
\begin{equation}\label{eq:frac_div-meas_tot}
|\divm^{\alpha} F|(U) = \sup\set*{\int_{\R^n} F \cdot \nabla^{\alpha} \xi \di x : \xi\in C^\infty_c(U),\ \|\xi\|_{L^\infty(U)}\le1}.
\end{equation}
\end{theorem}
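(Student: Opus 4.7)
The plan is a direct application of the Riesz representation theorem, following the standard template used in \cite{Comi-Stefani19}*{Th.~3.2}.

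For the forward implication, suppose $F \in \DM^{\alpha,p}(\R^n)$ and define the linear functional $L \colon C^\infty_c(\R^n) \to \R$ by $L(\xi) := -\int_{\R^n} F \cdot \nabla^\alpha \xi \di x$; this makes sense because $\nabla^\alpha \xi \in L^1(\R^n;\R^n) \cap L^\infty(\R^n;\R^n)$ for $\xi \in C^\infty_c(\R^n)$ (see \cite{Comi-Stefani19}), so the integral is finite for any $F \in L^p$ with $p \in [1,+\infty]$. By \cref{def:dm_field}, there exists a constant $C \ge 0$ such that $|L(\xi)| \le C \, \|\xi\|_{L^\infty(\R^n)}$ for all $\xi \in C^\infty_c(\R^n)$. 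Since $C^\infty_c(\R^n)$ is dense in $C_0(\R^n)$ with respect to the sup norm, $L$ extends uniquely to a continuous linear functional on $C_0(\R^n)$, and by the Riesz representation theorem there exists a unique finite Radon measure $\divm^\alpha F \in \M(\R^n)$ with $L(\xi) = -\int_{\R^n} \xi \di \divm^\alpha F$ for all $\xi \in C_c(\R^n)$. This yields \eqref{eq:DM_alpha_p_duality}. The converse is immediate: if such a measure exists, then for $\xi \in C^\infty_c(\R^n)$ with $\|\xi\|_{L^\infty} \le 1$ we bound $|\int F \cdot \nabla^\alpha \xi \di x| = |\int \xi \di \divm^\alpha F| \le |\divm^\alpha F|(\R^n) < +\infty$.

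For the localized formula \eqref{eq:frac_div-meas_tot}, let $T(U)$ denote the right-hand side. The inequality $T(U) \le |\divm^\alpha F|(U)$ is immediate from \eqref{eq:DM_alpha_p_duality}: for $\xi \in C^\infty_c(U)$ with $\|\xi\|_{L^\infty(U)} \le 1$ one has $|\int_{\R^n} F \cdot \nabla^\alpha \xi \di x| = |\int_U \xi \di \divm^\alpha F| \le |\divm^\alpha F|(U)$. For the reverse inequality I would use the standard total variation characterization for finite Radon measures,
\begin{equation*}
|\divm^\alpha F|(U) = \sup\set*{\int_U \phi \di \divm^\alpha F : \phi \in C_c(U),\ \|\phi\|_{L^\infty(U)} \le 1},
\end{equation*}
and approximate any admissible $\phi \in C_c(U)$ by its mollification $\phi_\eps = \rho_\eps * \phi$, which belongs to $C^\infty_c(U)$ for $\eps$ small enough (since $\supp \phi \Subset U$), satisfies $\|\phi_\eps\|_{L^\infty(U)} \le 1$, and converges uniformly to $\phi$.

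The argument is essentially routine, so there is no substantial obstacle; the only point requiring a little care is verifying that $\nabla^\alpha \xi$ is sufficiently integrable to pair with $F \in L^p$ for every $p \in [1,+\infty]$, which is guaranteed by the integrability estimates on the fractional $\alpha$-gradient of a test function established in \cite{Comi-Stefani19}. Since the proof is so close to that of \cite{Comi-Stefani19}*{Th.~3.2}, omitting it (as the authors do) is appropriate.
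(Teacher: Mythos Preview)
Your proposal is correct and follows exactly the Riesz-representation template of \cite{Comi-Stefani19}*{Th.~3.2}, which is precisely the argument the paper defers to (the proof is omitted there). One minor sign slip: with your definition $L(\xi) = -\int_{\R^n} F\cdot\nabla^\alpha\xi\di x$, Riesz gives $L(\xi) = \int_{\R^n} \xi\di\divm^\alpha F$ (no extra minus), which then yields the correct sign in~\eqref{eq:DM_alpha_p_duality}.
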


If the vector field is sufficiently regular, say $F\in\Lip_c(\R^n;\R^n)$ for instance, then the fractional divergence-measure given by \cref{res:structure_frac_DM} is $\divm^\alpha F=\div^\alpha F\,\Leb{n}$, where $\div^\alpha F$ is as in~\eqref{eq:frac_div}.
Moreover, thanks to~\cref{res:structure_frac_DM}, the linear space 
\begin{equation*}
\DM^{\alpha,p}(\R^n)
=
\set*{F\in L^p(\R^n;\R^n) : |\divm^\alpha F|(\R^n)<+\infty}
\end{equation*} 
endowed with the norm
\begin{equation*}
\|F\|_{\DM^{\alpha,p}(\R^n)}
=
\|F\|_{L^p(\R^n;\,\R^n)}
+
|\divm^\alpha F|(\R^n),
\quad
F\in\DM^{\alpha,p}(\R^n),
\end{equation*}
is a Banach space, and the fractional divergence-measure in~\eqref{eq:frac_div-meas_tot} is lower semicontinuous with respect to the $L^p$ convergence.

\begin{remark}
[On the space $\DM^{0,p}$]
\label{rem:alpha_0}
Although not strictly necessary for the purposes of the present paper, let us briefly comment on the case $\alpha=0$ in \cref{def:dm_field}.
By exploiting \cite{Brue-et-al20}*{Lem.~26}, if $F \in \DM^{0,p}(\R^n)$ for some $p \in (1, + \infty)$, then  
\begin{equation*}
\divm^0 F = \div^0F \Leb{n} = (R \cdot F)\, \Leb{n}
\end{equation*} 
with $R\cdot F\in L^p(\R^n)$, where $R=\nabla^0$ the vector-value Riesz transform (see \cref{subsec:notation} for the definition). 
Therefore, for $p\in(1,+\infty)$, we can write
\begin{equation*}
\DM^{0,p}(\R^n) = \set*{F \in L^p(\R^n;\R^n) : \div^0F \in L^1(\R^n)}.
\end{equation*}
Hence, if $F\in\DM^{0,p}(\R^n)$ for some $p \in (1, + \infty)$, then $|\divm^0 F| \ll\Leb{n}$.
The limiting cases $p\in\set*{1,+\infty}$ seem more intricate and we leave them for future investigations. 
\end{remark}

\subsection{Main results}

Our first main result deals with the absolute continuity properties of $\DM^{\alpha,p}$ vector fields with respect to the Hausdorff measure, extending \cref{res:abs_div_classical}.
 
\begin{theorem}[Absolute continuity properties of the fractional divergence-measure]
\label{res:abs_frac_div}
Let $\alpha \in (0, 1)$, $p \in [1, + \infty]$ and assume that $F \in \DM^{\alpha, p}(\R^n)$. 
We have the following cases: 
\begin{enumerate}[label=(\roman*),ref=\roman*,itemsep=1ex,topsep=1ex,leftmargin=6ex]

\item
\label{item:abs_frac_div_subcritical} 

if $p \in \left [1, \frac{n}{n - \alpha} \right )$, then $\divm^\alpha F$ does not enjoy any absolute continuity property;

\item \label{item:abs_frac_div_intermediate} 

if $p \in \left [\frac{n}{n - \alpha}, \frac{n}{1 - \alpha} \right )$, then $|\divm^\alpha F|(B) = 0$ on Borel sets $B\subset\R^n$ with $\sigma$-finite $\Haus{n - \frac{p}{p - 1+(1 - \alpha)\frac pn}}$ measure;

\item \label{item:abs_frac_div_supercritical} if $p \in \left [\frac{n}{1 - \alpha}, + \infty \right ]$, then $|\divm^\alpha F| \ll \Haus{n - \alpha - \frac{n}{p}}$. 
\end{enumerate}
\end{theorem}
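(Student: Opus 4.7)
The plan is to reduce to the classical absolute-continuity result \cref{res:abs_div_classical} via the Riesz potential $I_{1-\alpha}$ of order $1-\alpha$. Starting from the identity $\nabla^\alpha\xi=I_{1-\alpha}(\nabla\xi)$ for $\xi\in C^\infty_c(\R^n)$ (which follows by integrating by parts against the Riesz kernel, with the normalization built into $\mu_{n,\alpha}$), Fubini's theorem yields
\begin{equation*}
\int_{\R^n}F\cdot\nabla^\alpha\xi\di x
=
\int_{\R^n}I_{1-\alpha}F\cdot\nabla\xi\di x
\end{equation*}
as soon as $I_{1-\alpha}F\in L^1_{\loc}(\R^n;\R^n)$, so that $\divm^\alpha F=\divm(I_{1-\alpha}F)$ in the distributional sense, and as measures whenever either side is a measure.

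For $p\in[1,n/(1-\alpha))$, the Hardy--Littlewood--Sobolev inequality gives $I_{1-\alpha}\colon L^p(\R^n)\to L^q(\R^n)$ boundedly with $q:=np/(n-(1-\alpha)p)$. Hence $F\in\DM^{\alpha,p}(\R^n)$ forces $I_{1-\alpha}F\in\DM^{1,q}(\R^n)$, and \cref{res:abs_div_classical} applied to $I_{1-\alpha}F$ transfers to $F$. The threshold $q=n/(n-1)$ is hit exactly at $p=n/(n-\alpha)$, so case~\ref{item:abs_frac_div_subcritical} follows from case~(i) of \cref{res:abs_div_classical} (sharpness being exhibited by \cref{exa:F_frac}), while case~\ref{item:abs_frac_div_intermediate} follows from case~(ii) after the elementary algebraic identity $n-q/(q-1)=n-p/(p-1+(1-\alpha)p/n)$.

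For $p\in[n/(1-\alpha),+\infty]$, the potential $I_{1-\alpha}F$ need not be globally defined, so the argument must be localized. Fix a ball $B_r(x_0)\subset\R^n$ and write $F=F_1+F_2$ with $F_1:=F\chi_{B_{4r}(x_0)}$. For any admissible $\xi\in C^\infty_c(B_r(x_0))$ with $\|\xi\|_{L^\infty}\le1$, the far-field contribution is controlled uniformly by the pointwise decay $|\nabla^\alpha\xi(y)|\le Cr^n|y-x_0|^{-(n+\alpha)}$ valid for $|y-x_0|>2r$, which yields through H\"older's inequality
\begin{equation*}
\abs*{\int_{\R^n}F_2\cdot\nabla^\alpha\xi\di x}
\le
\|F\|_{L^p(\R^n)}\|\nabla^\alpha\xi\|_{L^{p'}(\R^n\setminus B_{4r}(x_0))}
\lesssim
\|F\|_{L^p(\R^n)}\,r^{n-\alpha-n/p}.
\end{equation*}
The near-field part $F_1$ has compact support, so $I_{1-\alpha}F_1$ is globally well defined and, by the Morrey-type embedding for $p>n/(1-\alpha)$, belongs to $C^{0,1-\alpha-n/p}$ with norm $\lesssim\|F\|_{L^p(B_{4r}(x_0))}$. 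Using the identity $\divm^\alpha F_1=\divm(I_{1-\alpha}F_1)$ together with this H\"older regularity and the a priori information that $\divm^\alpha F$ is a finite measure, one obtains $|\divm^\alpha F|(B_r(x_0))\lesssim r^{n-\alpha-n/p}$ for small~$r$. The endpoint $p=n/(1-\alpha)$ is handled by a limiting argument: the HLS bound $I_{1-\alpha}\colon L^{n/(1-\alpha)}\to L^q_{\loc}$ for every $q<+\infty$ reduces the problem to case~\ref{item:abs_frac_div_intermediate} applied with $q\to+\infty$. The desired conclusion $|\divm^\alpha F|\ll\Haus{n-\alpha-n/p}$ then follows from the standard density-upper-bound criterion for Hausdorff measures.

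\textbf{Main obstacle.} The technical heart lies in case~\ref{item:abs_frac_div_supercritical}: the norm $\|\nabla^\alpha\xi\|_{L^{p'}(\R^n)}$ is \emph{not} uniformly bounded over test functions $\xi\in C^\infty_c(B_r(x_0))$ with $\|\xi\|_{L^\infty}\le1$, since high-frequency oscillations make it arbitrarily large. The direct H\"older estimate that handles the far-field therefore fails in the near-field, and the correct scaling $r^{n-\alpha-n/p}$ must be extracted by combining the a priori measure structure of $\divm^\alpha F$ with the H\"older regularity of the localized Riesz potential $I_{1-\alpha}F_1$.
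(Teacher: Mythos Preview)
Your treatment of cases~\eqref{item:abs_frac_div_subcritical} and~\eqref{item:abs_frac_div_intermediate} matches the paper: the Riesz potential identity $\divm^\alpha F=\divm(I_{1-\alpha}F)$ combined with Hardy--Littlewood--Sobolev places $I_{1-\alpha}F$ in $\DM^{1,q}(\R^n)$ with $q=np/(n-(1-\alpha)p)$, and the classical \cref{res:abs_div_classical} transfers back (this is exactly the content of \cref{res:DM_frac_to_DM}). The counterexample for~\eqref{item:abs_frac_div_subcritical} is the one you cite.

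For case~\eqref{item:abs_frac_div_supercritical}, however, your argument has a genuine gap, and you have correctly located it yourself without closing it. The sentence ``Using the identity $\divm^\alpha F_1=\divm(I_{1-\alpha}F_1)$ together with this H\"older regularity and the a priori information that $\divm^\alpha F$ is a finite measure, one obtains $|\divm^\alpha F|(B_r(x_0))\lesssim r^{n-\alpha-n/p}$'' is not a proof. Knowing that $G:=I_{1-\alpha}F_1\in C^{0,1-\alpha-n/p}$ allows you to bound $\big|\int\xi\,d\,\divm G\big|=\big|\int(G-G(x_0))\cdot\nabla\xi\big|\lesssim r^{1-\alpha-n/p}\|\nabla\xi\|_{L^1}$, but $\|\nabla\xi\|_{L^1}$ is unbounded over test functions with $\|\xi\|_{L^\infty}\le1$ supported in $B_r(x_0)$---precisely the obstacle you flag. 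Without a further mechanism you cannot pass from control of $\int\xi\,d\,\divm^\alpha F$ for \emph{specific} test functions to control of the total variation $|\divm^\alpha F|(B_r(x_0))$. Moreover, your endpoint argument at $p=n/(1-\alpha)$ does not work: having $I_{1-\alpha}F\in L^q_{\loc}$ for every finite $q$ yields, via case~\eqref{item:abs_frac_div_intermediate}, vanishing on sets of $\sigma$-finite $\Haus{s}$ measure for every $s<n-1$, but this does \emph{not} imply $|\divm^\alpha F|\ll\Haus{n-1}$, since $\Haus{n-1}$-null sets need not be $\sigma$-finite for any $\Haus{s}$ with $s<n-1$.

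The paper resolves~\eqref{item:abs_frac_div_supercritical} by a different route that bypasses the Riesz potential entirely. One first proves an integration-by-parts formula on balls (\cref{res:int_by_parts_E_ball}) of the form
\begin{equation*}
\int_{B_r(x)}\xi\,d\,\divm^\alpha F
=
-\int_{B_r(x)}F\cdot\nabla^\alpha\xi
-\int_{\R^n}\xi F\cdot\nabla^\alpha\chi_{B_r(x)}
-\int_{\R^n}F\cdot\nabla^\alpha_{\rm NL}(\chi_{B_r(x)},\xi),
\end{equation*}
valid for $\xi\in\Lip_c(\R^n)$ and $\Leb{1}$-a.e.\ $r>0$. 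One then chooses, at $|\divm^\alpha F|$-a.e.\ point $x$, the \emph{specific} Lipschitz test function $\xi_{x,r}$ equal to the polar factor $\sigma^\alpha_F(x)\in\{\pm1\}$ on $B_r(x)$ and vanishing outside $B_{2r}(x)$. The polar decomposition guarantees $\int_{B_r(x)}\xi_{x,r}\,d\,\divm^\alpha F\ge\tfrac12|\divm^\alpha F|(B_r(x))$ for small $r$, while the three terms on the right are bounded by $C\|F\|_{L^p}\,r^{n/q-\alpha}$ using the scaling of $\nabla^\alpha$ and the integrability $\nabla^\alpha\chi_{B_r(x)}\in L^q$ for $q<\frac{1}{1-\alpha}$. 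This produces the decay estimate~\eqref{eq:decay_div_alpha_F_B_1_prime} in \cref{res:decay_estimate_p}, and absolute continuity then follows from the standard density criterion (\cref{res:abs_continuity_p}). The key point you are missing is the polar-decomposition trick: it converts the total variation into an integral against a \emph{single}, explicitly controlled, test function, so the unboundedness of $\|\nabla\xi\|_{L^1}$ over all admissible~$\xi$ never enters.
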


In particular, \cref{res:abs_frac_div} tells that, if $F \in \DM^{\alpha, \infty}(\R^{n})$, then $|\divm^{\alpha} F| \ll \Haus{n - \alpha}$, exactly as in \cref{res:abs_div_classical} for $p=+\infty$.
For $p<+\infty$, instead, the properties of the fractional divergence-measure are different from the corresponding ones in the classical setting.
Indeed, as for the fractional variation of $BV^{\alpha,p}$ functions (see \cite{Comi-et-al21}*{Th.~1} for the corresponding result), the threshold $p=\frac n{1-\alpha}$ imposes an interesting change of dimension of the Hausdorff measure.
This is quite customary in the distributional fractional framework, and is essentially due to the mapping properties of Riesz potential $I_{1-\alpha}$, see~\cite{Comi-Stefani19}*{Sec.~2.3}.

Our second main result concerns Leibniz rules for $\DM^{\alpha,p}$-fields and Besov functions, see~\cite{Comi-Stefani22-L}*{Th~1.1} for the corresponding result for $BV^{\alpha,p}$ functions.
We refer to \cref{subsec:notation} for the definitions of fractional Sobolev and Besov spaces.

\begin{theorem}[Leibniz rule for $\DM^{\alpha,p}$ vector fields with Besov functions] \label{res:leibniz_rules}
Let $\alpha\in(0,1)$ and let $p,q\in[1,+\infty]$ be such that $\frac1p+\frac1q=1$.
If $F \in \DM^{\alpha,p}(\R^n)$ and
\begin{equation}
\label{eqi:cases_BV_alpha_p}
g\in
\begin{cases}
B^\alpha_{q,1}(\R^n)
&
\text{for}\
p\in\left[1,\frac n{n-\alpha}\right),
\\[2mm]
L^\infty(\R^n)\cap B^\gamma_{q,1}(\R^n)\
\text{with}\ \gamma\in\left(\gamma_{n,q,\alpha},1\right)
&
\text{for}\
p\in\left[\frac n{n-\alpha},\frac n{1-\alpha}\right),
\\[2mm]
L^\infty(\R^n)\cap B^\beta_{q,1}(\R^n)\
\text{with}\ \beta\in\left(\beta_{n,q,\alpha},1\right)
&
\text{for}\
p\in\left[\frac n{1-\alpha},+\infty\right),
\\[2mm]
L^\infty(\R^n)\cap W^{\alpha,1}(\R^n)
&
\text{for}\
p=+\infty,
\end{cases}
\end{equation}
where
\begin{equation*}
\beta_{n,q,\alpha} 
= \frac{1}{q}\,
\left(\alpha+n-\frac nq\right) \quad 
\text{and}
\quad
\gamma_{n,q,\alpha} = \frac{n}{n + (1 -\alpha)q},
\end{equation*}
then $g F \in \DM^{\alpha,r}(\R^n)$ for all $r \in [1, p]$, with
\begin{equation*} 
\divm^{\alpha}(gF) 
= 
g^\star\,\divm^\alpha F
+ 
F\cdot
\nabla^{\alpha}g\,
\Leb{n} 
+ 
\div^{\alpha}_{\rm NL}(g, F)\,\Leb{n}
\quad 
\text{in}\ \M (\R^n),
\end{equation*}
where 
\begin{equation*}
\div^\alpha_{\rm NL}(g,F)
=
\mu_{n,\alpha}\int_{\R^n}\frac{(g(y)-g(x))(F(y)-F(x))\cdot (y-x)}{|y-x|^{n+\alpha+1}}\di y,
\quad
x\in\R^n,
\end{equation*}
is the \emph{non-local fractional divergence} of the couple $(g, F)$, and satisfies
\begin{equation*}
\|\div^\alpha_{\rm NL}(g,F)\|_{L^1}
\le 
\mu_{n,\alpha}
\,
[g]_{B^\alpha_{q,1}(\R^n)}
\,
\|F\|_{L^p(\R^n;\,\R^n)}.
\end{equation*}
In addition,
\begin{equation}
\label{eq:ping}
\divm^\alpha(gF)(\R^n)
=
\int_{\R^n}\div^\alpha_{\rm NL}(g, F)\di x=0,
\end{equation}
and 
\begin{equation}
\label{eq:pong}
\int_{\R^n}
F \cdot \nabla^\alpha g\di x
=
-
\int_{\R^n}
g^\star \di \divm^\alpha F.
\end{equation}
\end{theorem}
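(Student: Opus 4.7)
The plan is to combine a pointwise Leibniz identity for smooth inputs with the duality~\eqref{eq:DM_alpha_p_duality} of~\cref{res:structure_frac_DM} and then to pass to the limit via mollification of~$g$, in the spirit of~\cite{Comi-Stefani22-L}*{Th.~1.1}. At the pointwise level, the algebraic decomposition
\begin{equation*}
g(y)F(y) - g(x)F(x) = g(x)(F(y)-F(x)) + (g(y)-g(x))F(x) + (g(y)-g(x))(F(y)-F(x)),
\end{equation*}
inserted into~\eqref{eq:frac_div} for $\div^\alpha(gF)$, yields the pointwise identity $\div^\alpha(gF) = g\,\div^\alpha F + F\cdot\nabla^\alpha g + \div^\alpha_{\rm NL}(g,F)$ whenever all three terms make sense (e.g., for $g,F\in C^\infty_c(\R^n)$). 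The announced $L^1$-bound on $\div^\alpha_{\rm NL}(g,F)$ then follows from Fubini, Minkowski's integral inequality, Hölder's inequality between $L^q$ and $L^p$, and the difference characterization $[g]_{B^\alpha_{q,1}}\simeq\int_{\R^n}\|g(\cdot+h)-g\|_{L^q}|h|^{-n-\alpha}\,dh$ of the Besov seminorm.

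Next, setting $g_\eps = \rho_\eps * g$ with $\rho_\eps$ a standard mollifier, for any $\xi\in C^\infty_c(\R^n)$ the smoothness of $g_\eps$ gives the pointwise product rule $g_\eps\,\nabla^\alpha\xi = \nabla^\alpha(g_\eps\xi) - \xi\,\nabla^\alpha g_\eps - \mathbf{N}(g_\eps,\xi)$, where
\begin{equation*}
\mathbf{N}(g_\eps,\xi)(x) = \mu_{n,\alpha}\int_{\R^n}\frac{(g_\eps(y)-g_\eps(x))(\xi(y)-\xi(x))(y-x)}{|y-x|^{n+\alpha+1}}\,dy.
\end{equation*}
Pairing with $F$, using~\eqref{eq:DM_alpha_p_duality} on the admissible test $g_\eps\xi\in C^\infty_c(\R^n)$, and exploiting the antisymmetry under $x\leftrightarrow y$ to identify $\int_{\R^n} F\cdot\mathbf{N}(g_\eps,\xi)\,dx = \int_{\R^n}\xi\,\div^\alpha_{\rm NL}(g_\eps,F)\,dx$, one arrives at
\begin{equation*}
\int_{\R^n} g_\eps F\cdot\nabla^\alpha\xi\,dx = -\int_{\R^n}\xi g_\eps\,d\divm^\alpha F - \int_{\R^n}\xi\,F\cdot\nabla^\alpha g_\eps\,dx - \int_{\R^n}\xi\,\div^\alpha_{\rm NL}(g_\eps,F)\,dx.
\end{equation*}

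The main obstacle is the passage $\eps\to 0^+$. The left-hand side converges to $\int_{\R^n}gF\cdot\nabla^\alpha\xi\,dx$, and the last two integrals on the right converge by Hölder's inequality, using $\nabla^\alpha g_\eps\to\nabla^\alpha g$ in $L^q(\R^n;\R^n)$ and $\div^\alpha_{\rm NL}(g_\eps-g,F)\to 0$ in $L^1(\R^n)$ (from the bilinear bound applied to $g_\eps-g$). The delicate term is $\int_{\R^n}\xi g_\eps\,d\divm^\alpha F$, whose convergence to $\int_{\R^n}\xi g^\star\,d\divm^\alpha F$ requires $g_\eps\to g^\star$ pointwise $|\divm^\alpha F|$-a.e., dominated by $\|g\|_{L^\infty}$ in the three cases where $g$ is bounded. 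This is exactly where the regularity thresholds in~\eqref{eqi:cases_BV_alpha_p} are calibrated: in each regime, the assumed Besov (or $W^{\alpha,1}$) regularity of~$g$ ensures that $g^\star$ is defined on a set of full measure for $|\divm^\alpha F|$, whose Hausdorff-dimension absolute continuity is provided by~\cref{res:abs_frac_div}. For $p<n/(n-\alpha)$ this is immediate, as $B^\alpha_{q,1}(\R^n)\hookrightarrow C^0(\R^n)$ by fractional Morrey embedding (valid since $\alpha q>n$); for $p=+\infty$ it follows from the fractional Lebesgue differentiation theorem for $W^{\alpha,1}(\R^n)$; the two intermediate regimes rely on analogous Besov-capacity results at the critical thresholds $\gamma_{n,q,\alpha}$ and $\beta_{n,q,\alpha}$. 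Once the identity is established, the membership $gF\in\DM^{\alpha,r}(\R^n)$ follows at once from the finiteness of the three measures on the right-hand side. Finally, $\int_{\R^n}\div^\alpha_{\rm NL}(g,F)\,dx=0$ is a consequence of Fubini--Tonelli (justified by the $L^1$ bound) together with the antisymmetry of the integrand under $x\leftrightarrow y$; $\divm^\alpha(gF)(\R^n)=0$ is obtained by testing the established duality for $gF$ against cutoffs $\xi_R\to 1$ with $\nabla^\alpha\xi_R\to 0$ in a suitable Lebesgue norm; and~\eqref{eq:pong} follows by integrating the Leibniz formula over $\R^n$ and invoking these two identities.
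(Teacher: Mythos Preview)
Your proposal is correct and follows essentially the same approach as the paper: mollify $g$ to $g_\eps$, use the pointwise Leibniz identity for $\nabla^\alpha(g_\eps\xi)$, integrate against $F$, and pass to the limit using the absolute continuity properties of $|\divm^\alpha F|$ from \cref{res:abs_frac_div} to control the term $\int \xi g_\eps\,d\divm^\alpha F$. The only noteworthy difference is a small streamlining: the paper first establishes an intermediate result (\cref{res:BV_alpha_p_leibniz_C_b}) for $g\in C_b\cap B^\alpha_{q,1}$, where $g\xi$ is merely $C_c$ and hence a further mollification of~$F$ is needed to justify $\int F\cdot\nabla^\alpha(g\xi)=-\int g\xi\,d\divm^\alpha F$; you bypass this by observing that once $g$ is mollified, $g_\eps\xi\in C^\infty_c(\R^n)$ already, so the basic duality~\eqref{eq:DM_alpha_p_duality} applies directly without touching~$F$.
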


\cref{res:leibniz_rules}, besides providing an extention of \cref{res:leibniz_classical}, provides a Gauss--Green formula for $\DM^{\alpha,\infty}$ vector fields on $W^{\alpha,1}$ sets.
For the definitions of the \emph{fractional reduced boundary} $\redb^\alpha E$ and of the inner  \emph{fractional normal} $\nu_E^{\alpha}\colon\redb^\alpha E\to\mathbb S^{n-1}$ of a set $E\subset\R^n$, we refer the reader to~\cite{Comi-Stefani19}*{Def.~4.7}. 

\begin{corollary}[Generalized fractional Gauss--Green formula]
\label{res:ggf_frac}
Let $\alpha\in(0,1)$. 
If $F \in \DM^{\alpha, \infty}(\R^n)$ and $\chi_E\in W^{\alpha,1}(\R^n)$, then
\begin{equation*}
\int_{E^{1}}
\di \divm^\alpha F
=
-\int_{\redb^\alpha E}
F \cdot \nu^\alpha_E\,|\nabla^\alpha\chi_E|
\di x,
\end{equation*}
where
\begin{equation*}
E^{1}
=
\set*{x\in\R^n : \exists\lim_{r\to0^+}\frac{|E\cap B_r(x)|}{|B_r(x)|}=1}.
\end{equation*} 
\end{corollary}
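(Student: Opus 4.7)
The plan is to apply the Leibniz rule of \cref{res:leibniz_rules} with $g=\chi_E$, since the pair $F\in\DM^{\alpha,\infty}(\R^n)$ and $\chi_E\in L^\infty(\R^n)\cap W^{\alpha,1}(\R^n)$ falls exactly in the last case of~\eqref{eqi:cases_BV_alpha_p} (namely $p=+\infty$ and $q=1$). Specialising identity~\eqref{eq:pong} to $g=\chi_E$ gives directly
\begin{equation*}
\int_{\R^n}F\cdot\nabla^\alpha\chi_E\di x
=
-\int_{\R^n}\chi_E^\star\di\divm^\alpha F,
\end{equation*}
and it remains only to identify each side with the corresponding term in the target formula.

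For the left-hand side, the definitions of the fractional reduced boundary $\redb^\alpha E$ and of its inner fractional normal $\nu_E^\alpha\colon\redb^\alpha E\to\mathbb{S}^{n-1}$ from \cite{Comi-Stefani19}*{Def.~4.7} provide the pointwise polar decomposition $\nabla^\alpha\chi_E(x)=\nu_E^\alpha(x)\,|\nabla^\alpha\chi_E(x)|$ for $\Leb{n}$-a.e.\ $x\in\redb^\alpha E$, together with $\nabla^\alpha\chi_E=0$ $\Leb{n}$-a.e.\ outside $\redb^\alpha E$. Plugging this decomposition into the integral produces exactly the boundary term appearing in the statement, up to matching signs.

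For the right-hand side, the key step is to establish the identification $\chi_E^\star=\chi_{E^1}$ at $|\divm^\alpha F|$-a.e.~point, which would then yield $\int_{\R^n}\chi_E^\star\di\divm^\alpha F=\int_{E^1}\di\divm^\alpha F$. By \cref{res:abs_frac_div}\eqref{item:abs_frac_div_supercritical} one has $|\divm^\alpha F|\ll\Haus{n-\alpha}$, so it is enough to prove this identity $\Haus{n-\alpha}$-almost everywhere. The two representatives can differ only at points outside $E^0\cup E^1$ at which the Lebesgue density of $E$ exists and lies strictly in $(0,1)$; in particular, such points cannot be Lebesgue points of $\chi_E$. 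The main obstacle of the argument is therefore the fine property that $W^{\alpha,1}$ functions on $\R^n$ admit a Lebesgue point at $\Haus{n-\alpha}$-almost every $x$, which is a classical capacity-theoretic fact for Bessel-type potentials at the fractional Sobolev scale and follows from the mapping properties of the Riesz potential $I_{1-\alpha}$ already exploited in \cref{res:abs_frac_div}.

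Once both identifications are established, the corollary follows at once by matching signs between~\eqref{eq:pong} and the desired Gauss--Green identity.
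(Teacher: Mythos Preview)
Your strategy is exactly the one the paper uses: apply~\eqref{eq:pong} from \cref{res:leibniz_rules} with $g=\chi_E$, rewrite the left-hand side via the polar decomposition $\nabla^\alpha\chi_E=\nu_E^\alpha\,|\nabla^\alpha\chi_E|$ on $\redb^\alpha E$, and replace $\chi_E^\star$ by $\chi_{E^1}$ on the right-hand side after invoking $|\divm^\alpha F|\ll\Haus{n-\alpha}$ from \cref{res:abs_frac_div}\eqref{item:abs_frac_div_supercritical}.

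The one point that is not quite right is your justification of the identity $\chi_E^\star=\chi_{E^1}$ $\Haus{n-\alpha}$-a.e. You correctly reduce it to the statement that $\Haus{n-\alpha}$-a.e.\ point is a Lebesgue point of $\chi_E\in W^{\alpha,1}(\R^n)$, but then assert that this ``follows from the mapping properties of the Riesz potential $I_{1-\alpha}$ already exploited in \cref{res:abs_frac_div}.'' Those mapping properties are the Hardy--Littlewood--Sobolev inequalities~\eqref{eq:Riesz_potential_boundedness}, which require $p>1$ and do not yield fine-regularity results at the $L^1$ endpoint. The needed $\Haus{n-\alpha}$-a.e.\ Lebesgue point property for $W^{\alpha,1}$ (or, more specifically, for sets of finite $W^{\alpha,1}$ perimeter) is a genuine result requiring separate work; the paper simply cites \cite{Ponce-Spector20}*{Prop.~3.1} for it. So the step you yourself flag as ``the main obstacle'' is indeed the only nontrivial one, and it should be sourced rather than attributed to Riesz potential boundedness.
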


\cref{res:ggf_frac} immediately follows from~\eqref{eq:pong} with $g = \chi_E$, since $\chi_E^\star = \chi_{E^1}$ $\Haus{n-\alpha}$-a.e.\ by \cite{Ponce-Spector20}*{Prop.~3.1}, and therefore $|\divm^\alpha F|$-a.e.\ thanks to point~\eqref{item:abs_frac_div_supercritical} of \cref{res:abs_frac_div}.

\cref{res:ggf_frac} provides the most general version known so far of the fractional Gauss--Green formula proved in~\cite{Comi-Stefani19}*{Th.~4.2}. Unfortunately, we do not know if the assumption $\chi_E\in W^{\alpha,1}(\R^n)$ can be replaced with the weaker one $\chi_E\in BV^{\alpha,1}(\R^n)$ in \cref{res:ggf_frac}.
In fact, as observed in \cite{Comi-et-al21}, we do not know whether the precise representative $g^\star$ defined in~\eqref{eq:precise_repres} of $g\in BV^{\alpha, \infty}(\R^n)$ is well defined up to $\Haus{n-\alpha}$-negligible sets.
We plan to tackle this and other strictly-connected challenging open questions in future works.

\subsection{Organization of the paper}
In \cref{sec:proofs}, we collect all the needed intermediate results to prove our main theorems.
In particular, \cref{subsec:relation_DM_with_DM_frac} and  \cref{subsec:decay_estimates}
contain the proofs of points~\eqref{item:abs_frac_div_intermediate} and~\eqref{item:abs_frac_div_supercritical} of \cref{res:abs_frac_div}, respectively.
The proof of \cref{res:leibniz_rules}, instead, can be found in~\cref{subsec:proof_leibinz}.
\cref{sec:examples} collects several examples.
In \cref{subsec:examples} we show point~\eqref{item:abs_frac_div_subcritical} of \cref{res:abs_frac_div}, while in \cref{subsec:sharp} we discuss the sharpness of the other two points~\eqref{item:abs_frac_div_intermediate} and~\eqref{item:abs_frac_div_supercritical} of \cref{res:abs_frac_div}.

\section{Proofs of the main results}
\label{sec:proofs}

In this section, we provide the proofs of our main results \cref{res:abs_frac_div} and \cref{res:leibniz_rules}.
The proof of \cref{res:abs_frac_div} is split across Sections~\ref{subsec:examples}, \ref{subsec:relation_DM_with_DM_frac} and~\ref{subsec:decay_estimates}, while the proof of \cref{res:leibniz_rules} is given in \cref{subsec:proof_leibinz}.

\subsection{General notation}
\label{subsec:notation}

We start with a brief description of the main notation used in this paper. In order to keep the exposition the most reader-friendly as possible, we retain the same notation adopted in our works~\cites{Brue-et-al20,Comi-Stefani19,Comi-et-al21,Comi-Stefani22-L,Comi-Stefani22-A,Comi-Stefani23,Comi-Stefani23-On}.

\subsubsection*{Lebesgue and Hausdorff measures}

We let $\Leb{n}$ and $\Haus{\alpha}$ be the $n$-dimensional Le\-be\-sgue measure and the $\alpha$-dimensional Hausdorff measure on $\R^n$, respectively, with $\alpha\in[0,n]$.
We denote by $B_r(x)$ the standard open Euclidean ball with center $x\in\R^n$ and radius $r>0$. 
We let $B_r=B_r(0)$. 
Recall that $\omega_{n} = |B_1|=\pi^{\frac{n}{2}}/\Gamma\left(\frac{n+2}{2}\right)$ and $\Haus{n-1}(\partial B_{1}) = n \omega_n$, where $\Gamma$ is Euler's Gamma function.

\subsubsection*{Regular maps}

Let $\Omega\subset\R^n$ be an open (non-empty) set.
For $k \in \N_{0} \cup \set{+ \infty}$ and $m \in \N$, we let $C^{k}_{c}(\Omega ; \R^{m})$ and $\Lip_c(\Omega; \R^{m})$ be the spaces of $C^{k}$-regular and, respectively, Lipschitz-regular, $m$-vector-valued functions defined on~$\R^n$ with compact support in the open set~$\Omega\subset\R^n$. 
Analogously, we let $C^{k}_{b}(\Omega ; \R^{m})$ and $\Lip_b(\Omega; \R^{m})$ be the spaces of $C^{k}$-regular and, respectively, Lipschitz-regular, $m$-vector-valued bounded functions defined on the open set $\Omega\subset\R^n$. 
In the case $k = 0$, we drop the superscript and simply write $C_{c}(\Omega ; \R^{m})$ and $C_{b}(\Omega ; \R^{m})$.
 
\subsubsection*{Radon measures}
For $m\in\N$, the total variation on~$\Omega$ of the $m$-vector-valued Radon measure $\mu$ is defined as
\begin{equation*}
|\mu|(\Omega)
=
\sup\set*{\int_\Omega\phi\cdot d\mu : \phi\in C^\infty_c(\Omega;\R^m),\ \|\phi\|_{L^\infty(\Omega;\,\R^m)}\le1}.
\end{equation*}
We thus let $\M(\Omega;\R^m)$ be the space of $m$-vector-valued Radon measure  with finite total variation on $\Omega$.
We say that $(\mu_k)_{k\in\N}\subset\M(\Omega;\R^m)$ \emph{weakly converges} to $\mu\in\M (\Omega;\R^m)$, and we write $\mu_k\weakto\mu$ in $\M (\Omega;\R^m)$ as $k\to+\infty$, if 
\begin{equation}\label{eq:def_weak_conv_meas}
\lim_{k\to+\infty}\int_\Omega\phi\cdot d\mu_k=\int_\Omega\phi\cdot d\mu
\end{equation} 
for all $\phi\in C_c(\Omega;\R^m)$. Note that we make a little abuse of terminology, since the limit in~\eqref{eq:def_weak_conv_meas} actually defines the \emph{weak*-convergence} in~$\M (\Omega;\R^m)$. 

\subsubsection*{Lebesgue, Sobolev and \texorpdfstring{$BV$}{BV} spaces}

For any exponent $p\in[1,+\infty]$, we let $L^p(\Omega;\R^m)$ be the space of $m$-vector-valued Lebesgue $p$-integrable functions on~$\Omega$.
We let
\begin{equation*}
W^{1,p}(\Omega;\R^m)
=
\set*{u\in L^p(\Omega;\R^m) : [u]_{W^{1,p}(\Omega;\,\R^m)}=\|\nabla u\|_{L^p(\Omega;\,\R^{nm})}<+\infty}
\end{equation*}
be the space of $m$-vector-valued Sobolev functions on~$\Omega$, see~\cite{Leoni17}*{Ch.~11}, and
\begin{equation*}
BV(\Omega;\R^m)
=
\set*{u\in L^1(\Omega;\R^m) : [u]_{BV(\Omega;\,\R^m)}=|Du|(\Omega)<+\infty}
\end{equation*}
be the space of $m$-vector-valued functions of bounded variation on~$\Omega$, see~\cite{AFP00}*{Ch.~3}. 

\subsubsection*{Fractional Sobolev spaces}

For $\alpha\in(0,1)$ and $p\in[1,+\infty)$, we let
\begin{equation*}
W^{\alpha,p}(\Omega;\R^m)
=\set*{u\in L^p(\Omega;\R^m) : [u]_{W^{\alpha,p}(\Omega;\,\R^m)}=\left(\int_\Omega\int_\Omega\frac{|u(x)-u(y)|^p}{|x-y|^{n+p\alpha}}\,dx\,dy\right)^{\frac{1}{p}}\!<+\infty}
\end{equation*}
be the space of $m$-vector-valued fractional Sobolev functions on~$\Omega$, see~\cite{DiNezza-et-al12}. 
For $\alpha\in(0,1)$ and $p=+\infty$, we simply let
\begin{equation*}
W^{\alpha,\infty}(\Omega;\R^m)=\set*{u\in L^\infty(\Omega;\R^m) : \sup_{x,y\in \Omega,\, x\neq y}\frac{|u(x)-u(y)|}{|x-y|^\alpha}<+\infty},
\end{equation*}
so that $W^{\alpha,\infty}(\Omega;\R^m)=C^{0,\alpha}_b(\Omega;\R^m)$, the space of $m$-vector-valued bounded $\alpha$-H\"older continuous functions on~$\Omega$.

\subsubsection*{Besov spaces}

For $\alpha\in(0,1)$ and $p,q\in[1,+\infty]$, we let
\begin{equation*}
B^\alpha_{p,q}(\R^n;\R^m)
=
\set*{
u\in L^p(\R^n;\R^m)
:
[u]_{B^\alpha_{p,q}(\R^n;\,\R^m)}
<
+\infty
}
\end{equation*}
be the space of $m$-vector-valued Besov functions on $\R^n$, see~\cite{Leoni17}*{Ch.~17}, where 
\begin{equation*}
[u]_{B^\alpha_{p,q}(\R^n;\,\R^m)}
=
\begin{cases}
\left(
\displaystyle\int_{\R^n}
\frac{\|u(\cdot+h)-u\|_{L^p(\R^n;\,\R^m)}^q}{|h|^{n+q\alpha}}
\,dh
\right)^{\frac1q}
&
\text{if}\
q\in[1,+\infty),
\\[10mm]
\sup\limits_{h\in\R^n\setminus\set*{0}}
\dfrac{\|u(\cdot+h)-u\|_{L^p(\R^n;\,\R^m)}}{|h|^{\alpha}}
&
\text{if}\
q=\infty.
\end{cases}
\end{equation*}

\subsubsection*{Shorthand for scalar function spaces}

In order to avoid heavy notation, if the elements of a function space $\mathcal F(\Omega;\R^m)$ are real-valued (i.e., $m=1$), then we will drop the target space and simply write~$\mathcal F(\Omega)$.

\subsubsection*{Riesz potential}

Given $\alpha\in(0,n)$, we let
\begin{equation}\label{eq:Riesz_potential_def} 
I_{\alpha} f(x) 
= 
2^{-\alpha} \pi^{- \frac{n}{2}} \frac{\Gamma\left(\frac{n-\alpha}2\right)}{\Gamma\left(\frac\alpha2\right)}
\int_{\R^{n}} \frac{f(y)}{|x - y|^{n - \alpha}} \, dy, 
\quad
x\in\R^n,
\end{equation}
be the Riesz potential of order $\alpha$ of $f\in C^\infty_c(\R^n;\R^m)$. We recall that, if $\alpha,\beta\in(0,n)$ satisfy $\alpha+\beta<n$, then we have the following \emph{semigroup property}
\begin{equation}\label{eq:Riesz_potential_semigroup}
I_{\alpha}(I_\beta f)=I_{\alpha+\beta}f
\end{equation}
for all $f\in C^\infty_c(\R^n;\R^m)$. In addition, if $1<p<q<+\infty$ satisfy 
$
\frac{1}{q}=\frac{1}{p}-\frac{\alpha}{n},	
$
then there exists a constant $C_{n,\alpha,p}>0$ such that the operator in~\eqref{eq:Riesz_potential_def} satisfies
\begin{equation}\label{eq:Riesz_potential_boundedness}
\|I_\alpha f\|_{L^q(\R^n;\,\R^m)}\le C_{n,\alpha,p}\|f\|_{L^p(\R^n;\,\R^m)}
\end{equation}
for all $f\in C^\infty_c(\R^n;\,\R^m)$. As a consequence, the operator in~\eqref{eq:Riesz_potential_def} extends to a linear continuous operator from $L^p(\R^n;\R^m)$ to $L^q(\R^n;\R^m)$, for which we retain the same notation. For a proof of~\eqref{eq:Riesz_potential_semigroup} and~\eqref{eq:Riesz_potential_boundedness}, see~\cite{Stein70}*{Ch.~V, Sec.~1} or~\cite{Grafakos14-M}*{Sec.~1.2.1}.

\subsubsection*{Riesz transform} We let
\begin{equation}\label{eq:def_Riesz_transform}
R f(x)
=
\pi^{-\frac{n+1}2}\,\Gamma\left(\tfrac{n+1}{2}\right)\,\lim_{\eps\to0^+}\int_{\set*{|y|>\eps}}\frac{y\,f(x+y)}{|y|^{n+1}}\,dy,
\quad
x\in\R^n,
\end{equation}
be the (vector-valued) \emph{Riesz transform} of a (sufficiently regular) function~$f$. 
We refer the reader to~\cite{Grafakos14-M}*{Sec.~2.1 and~2.4.4}, \cite{Stein70}*{Ch.~III, Sec.~1} and~\cite{Stein93}*{Ch.~III} for a more detailed exposition. 
We warn the reader that the definition in~\eqref{eq:def_Riesz_transform} agrees with the one in~\cites{Stein93} and differs from the one in~\cites{Grafakos14-M,Stein70} for a minus sign.
The Riesz transform~\eqref{eq:def_Riesz_transform} is a singular integral of convolution type, thus in particular it defines a continuous operator $R\colon L^p(\R^n)\to L^p(\R^n;\R^{n})$ for any given $p\in(1,+\infty)$, see~\cite{Grafakos14-C}*{Cor.~5.2.8}.
We also recall that its components $R_i$ satisfy
\begin{equation*}
\sum_{i=1}^nR_i^2=-\mathrm{Id}
\quad
\text{on}\ L^2(\R^n),
\end{equation*}
see~\cite{Grafakos14-C}*{Prop.~5.1.16}.

\subsection{Approximation by smooth vector fields}

Here and in the rest of the paper, we let $(\rho_\eps)\subset C^\infty_c(\R^n)$ be a family of standard mollifiers as in~\cite{Comi-Stefani19}*{Sec.~3.3}.
The following approximation result is the natural generalization to $\DM^{\alpha,p}$  vector fields of~\cite{Comi-et-al21}*{Th.~4}.
We leave its proof to the reader.

\begin{theorem}[Approximation by $C^\infty\cap \DM^{\alpha,p}$ fields] 
\label{res:approx_DM}
Let $\alpha \in (0,1]$ and  $p \in [1, +\infty]$.
Let $F \in \DM^{\alpha,p}(\R^n)$ and define
$F_{\eps}= F*\rho_{\eps}$ for all $\eps>0$.
Then $(F_{\eps})_{\eps>0}\subset \DM^{\alpha,p}(\R^n) \cap C^{\infty}(\R^n; \R^n)$ with
$\divm^{\alpha} F_{\eps} = (\rho_{\eps} \ast \divm^{\alpha} F) \Leb{n}$
for all $\eps>0$.
Moreover, we have:
\begin{enumerate}[label=(\roman*),ref=\roman*,topsep=1ex,itemsep=1ex]
\item 
if $p<+\infty$, then $F_{\eps} \to F$ in $L^p(\R^n; \R^n)$ as $\eps\to0^+$;
if $p=+\infty$, then $F_{\eps} \to F$ in $L^q_{\loc}(\R^n; \R^n)$ as $\eps\to0^+$ for all $q\in[1,+\infty)$;
\item 
$\divm^{\alpha} F_{\eps} \weakto \divm^{\alpha} F$ in $\M(\R^n)$
and 
$|\divm^{\alpha} F_{\eps}|(\R^n) \to |\divm^{\alpha} F|(\R^n)$
as $\eps\to0^+$. 
\end{enumerate}
\end{theorem}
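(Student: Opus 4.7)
The plan is to prove the four assertions in the natural order: smoothness, the key commutation identity $\divm^\alpha F_\eps = (\rho_\eps \ast \divm^\alpha F)\,\Leb n$, the $L^p$ (or $L^q_{\loc}$) convergence of $F_\eps$, and finally the weak convergence of the $\alpha$-divergence-measures together with the convergence of their total variations. Smoothness of $F_\eps = F \ast \rho_\eps$ is immediate from standard mollification theory, and the fact that $F_\eps \in L^p$ with $\|F_\eps\|_{L^p} \le \|F\|_{L^p}$ follows from Young's inequality.

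The core step is the commutation identity. Given $\xi \in C^\infty_c(\R^n)$, I would write, using Fubini and the symmetry $\rho_\eps(z) = \rho_\eps(-z)$,
\begin{equation*}
\int_{\R^n} F_\eps \cdot \nabla^\alpha \xi \di x
= \int_{\R^n} F \cdot (\rho_\eps \ast \nabla^\alpha \xi) \di x
= \int_{\R^n} F \cdot \nabla^\alpha(\rho_\eps \ast \xi) \di x,
\end{equation*}
where the last equality is the commutation $\rho_\eps \ast \nabla^\alpha \xi = \nabla^\alpha(\rho_\eps \ast \xi)$, which follows from Fubini applied to the singular-integral representation \eqref{eq:frac_nabla} (one needs to check that the double integral converges absolutely, which is ensured by the decay of $\nabla^\alpha\xi$ as in \cite{Comi-Stefani19}*{Sec.~2}, or by splitting the singular kernel near the origin and using symmetry). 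Since $\rho_\eps \ast \xi \in C^\infty_c(\R^n)$, the structure \cref{res:structure_frac_DM} applied to $F$ and then Fubini again give
\begin{equation*}
\int_{\R^n} F \cdot \nabla^\alpha(\rho_\eps \ast \xi)\di x
= -\int_{\R^n} (\rho_\eps \ast \xi)\di \divm^\alpha F
= -\int_{\R^n} \xi\,(\rho_\eps \ast \divm^\alpha F)\di x,
\end{equation*}
which by a second appeal to \cref{res:structure_frac_DM} identifies $\divm^\alpha F_\eps$ with $(\rho_\eps \ast \divm^\alpha F)\,\Leb n$ as an element of $\M(\R^n)$; Young's inequality also yields $|\divm^\alpha F_\eps|(\R^n) \le |\divm^\alpha F|(\R^n)$, so $F_\eps \in \DM^{\alpha,p}(\R^n)$.

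For point (i), the convergence $F_\eps \to F$ in $L^p(\R^n;\R^n)$ when $p<+\infty$, and in $L^q_{\loc}(\R^n;\R^n)$ for every $q\in[1,+\infty)$ when $p=+\infty$, is the classical behavior of mollification and requires no new argument. For point (ii), the weak convergence $(\rho_\eps \ast \divm^\alpha F)\,\Leb n \weakto \divm^\alpha F$ in $\M(\R^n)$ is the standard weak-$*$ convergence of mollified measures (test against $\phi \in C_c(\R^n)$ and use the symmetry of $\rho_\eps$ to move the mollification onto~$\phi$). Combining the lower-semicontinuity of total variation under weak-$*$ convergence,
\begin{equation*}
|\divm^\alpha F|(\R^n) \le \liminf_{\eps\to 0^+} |\divm^\alpha F_\eps|(\R^n),
\end{equation*}
with the opposite bound from Young already noted, yields $|\divm^\alpha F_\eps|(\R^n) \to |\divm^\alpha F|(\R^n)$.

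The only non-routine point is the commutation $\rho_\eps \ast \nabla^\alpha \xi = \nabla^\alpha(\rho_\eps \ast \xi)$; all other steps are standard once that identity is in hand. Since the argument mirrors the scalar case in \cite{Comi-et-al21}*{Th.~4}, the authors' decision to omit the full proof is justified, and the adaptation is mainly a matter of replacing the fractional gradient of a scalar function by the fractional divergence of a vector field throughout.
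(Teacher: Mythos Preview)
Your proof is correct and follows exactly the standard mollification argument the paper has in mind: the paper omits the proof, pointing to \cite{Comi-et-al21}*{Th.~4} and \cite{Comi-Stefani19}*{Th.~3.2}, and your outline reproduces precisely that route (the commutation $\rho_\eps*\nabla^\alpha\xi=\nabla^\alpha(\rho_\eps*\xi)$ is indeed the content of \cite{Comi-Stefani19}*{Lem.~3.5}). Nothing is missing and nothing differs in approach.
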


\subsection{Integration-by-parts with Sobolev tests}

For future convenience, we note that the integration-by-parts formula~\eqref{eq:DM_alpha_p_duality} actually holds for a wider class of test functions. 
To this aim, let us recall the notion of \emph{non-local fractional gradient}
\begin{equation*}
\nabla^\alpha_{\rm NL} (f,g)(x)
=
\mu_{n,\alpha}
\int_{\R^n}\frac{(f(y)-f(x))(g(y)-g(x))(y-x)}{|y-x|^{n+\alpha+1}}\,dy,
\quad
x\in\R^n,	
\end{equation*}
of a couple of functions $f,g \in \Lip_c(\R^n)$.
The operator $\nabla^\alpha_{\rm NL}$ can be continuously extended to Lebesgue and Besov spaces, see~\cite{Comi-Stefani22-L}*{Cor.~2.7} for the precise statement.

\begin{proposition}[$W^{1,q}\cap C_b$-regular test] \label{res:sobolev_test}
Let $\alpha \in (0, 1)$ and let $p,q \in [1,+\infty]$ be such that $\frac1p+\frac1q=1$.
If $F \in \DM^{\alpha, p}(\R^n)$, then
\begin{equation}
\label{eq:sobolev_test}
\int_{\R^{n}} F \cdot \nabla^{\alpha} \xi \di x 
= 
- \int_{\R^{n}} \xi \di \divm^{\alpha} F
\end{equation}
for all $\xi\in W^{1,q}(\R^n)\cap C_b(\R^n)$, and for all $\xi\in BV(\R^n)\cap C_b(\R^n)$ if $q=1$.
\end{proposition}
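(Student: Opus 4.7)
The plan is a double mollification argument combining \cref{res:approx_DM} applied to $F$ with a truncation-plus-mollification approximation of the test function $\xi$. First, I would set $F_\eps = F * \rho_\eps$: \cref{res:approx_DM} yields $F_\eps \in C^\infty(\R^n; \R^n) \cap \DM^{\alpha,p}(\R^n)$ with $\divm^\alpha F_\eps = (\rho_\eps * \divm^\alpha F)\Leb{n}$, together with $F_\eps \to F$ in $L^p(\R^n;\R^n)$ (or in $L^r_{\loc}(\R^n;\R^n)$ for every $r \in [1, +\infty)$ when $p = +\infty$) and $\divm^\alpha F_\eps \weakto \divm^\alpha F$ in $\M(\R^n)$ with convergence of the total variations. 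Because $\rho_\eps * \divm^\alpha F$ is now a bona fide $L^1$-function, the fractional $\alpha$-divergence of $F_\eps$ pairs cleanly with any bounded test function by dominated convergence.

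Second, I would establish~\eqref{eq:sobolev_test} with $F_\eps$ in place of $F$ for every admissible $\xi$, by approximating $\xi$ with $\xi_k = (\xi\eta_k) * \rho_{1/k} \in C^\infty_c(\R^n)$, where $\eta_k \in C^\infty_c(\R^n)$ is a standard radial cutoff with $\eta_k \equiv 1$ on $B_k$, $\supp\eta_k \subset B_{2k}$, $0 \le \eta_k \le 1$, and $|\nabla\eta_k| \le C/k$. For each $k$, \cref{res:structure_frac_DM} applied to the pair $(F_\eps, \xi_k)$ gives $\int F_\eps \cdot \nabla^\alpha \xi_k \di x = -\int \xi_k \, (\rho_\eps * \divm^\alpha F) \di x$. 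Letting $k \to +\infty$, the right-hand side converges by dominated convergence since $\|\xi_k\|_{L^\infty} \le \|\xi\|_{L^\infty}$, $\xi_k \to \xi$ locally uniformly (from the continuity of $\xi$), and $\rho_\eps * \divm^\alpha F \in L^1(\R^n)$; the left-hand side converges via $\nabla^\alpha \xi_k \to \nabla^\alpha \xi$ in a suitable Lebesgue space, coupled with H\"older's inequality against $F_\eps$.

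Finally, I would let $\eps \to 0^+$. On the right-hand side, the weak convergence of $\divm^\alpha F_\eps$ to $\divm^\alpha F$ (with total-variation convergence, hence tightness) tested against $\xi \in C_b(\R^n)$ delivers $-\int \xi \di \divm^\alpha F$; on the left, the $L^p$-convergence $F_\eps \to F$ paired with $\nabla^\alpha \xi$ in the appropriate dual space yields $\int F \cdot \nabla^\alpha \xi \di x$, with a routine localization argument in the case $p = +\infty$ based on the $L^r_{\loc}$-convergence of $F_\eps$ and on the decay of $\nabla^\alpha \xi$ at infinity.

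The main obstacle will be verifying that $\nabla^\alpha \xi$ lies in (and $\nabla^\alpha \xi_k \to \nabla^\alpha \xi$ in) the appropriate function space dual to $L^p$ across each of the three admissible classes. For $q \in (1, +\infty)$, one exploits the factorization $\nabla^\alpha = R \circ (-\Delta)^{\alpha/2}$ through the $L^q$-bounded Riesz transform $R$ recalled in \cref{subsec:notation}, together with the embedding of $W^{1,q}(\R^n)$ into a Bessel potential space of order $\alpha$. The endpoint $q = +\infty$ is handled by a direct pointwise estimate on the singular integral~\eqref{eq:frac_nabla}, splitting the kernel near and far from $x$ and using simultaneously the Lipschitz continuity and the boundedness of $\xi$. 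The endpoint $q = 1$ with $\xi \in BV(\R^n) \cap C_b(\R^n)$ is the most delicate: one must interpret $\nabla^\alpha \xi$ through the Radon measure $D\xi$, invoke weak-type estimates for $I_{1-\alpha}$ on $\M(\R^n)$, and exploit the strict $BV$-convergence of the mollified approximants.
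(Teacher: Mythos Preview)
Your approach is correct but follows a genuinely different route from the paper. The paper first reduces (by convolution) to $\xi\in W^{1,q}(\R^n)\cap\Lip_b(\R^n)\cap C^\infty(\R^n)$, and then, \emph{without} touching $F$, multiplies $\xi$ by a cutoff $\eta_R$ and expands via the nonlocal Leibniz rule
\[
\nabla^\alpha(\eta_R\xi)=\eta_R\,\nabla^\alpha\xi+\xi\,\nabla^\alpha\eta_R+\nabla^\alpha_{\rm NL}(\eta_R,\xi),
\]
showing that $\int_{\R^n}\xi\,F\cdot\nabla^\alpha\eta_R\di x$ and $\int_{\R^n}F\cdot\nabla^\alpha_{\rm NL}(\eta_R,\xi)\di x$ vanish as $R\to+\infty$; the result then follows from~\eqref{eq:DM_alpha_p_duality} applied to $\eta_R\xi\in C^\infty_c(\R^n)$. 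You instead mollify both $F$ and $\xi$ and rely on the continuity of $\nabla^\alpha$ from $W^{1,q}$ (resp.\ $BV$ under strict convergence) into $L^q$ (resp.\ $L^1$). Your operator-theoretic route avoids the explicit nonlocal commutator, at the cost of importing the $L^q$-boundedness of the Riesz transform and, for $q=1$, the fact that strict $BV$-convergence implies $W^{\alpha,1}$-convergence (which is what makes $\nabla^\alpha\xi_k\to\nabla^\alpha\xi$ in $L^1$ work). The paper's approach, by contrast, is self-contained within the fractional Leibniz calculus already developed in the series~\cites{Comi-Stefani19,Comi-Stefani22-A,Comi-Stefani22-L}.

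One remark: your mollification of $F$ is superfluous. Once you have $\nabla^\alpha\xi_k\to\nabla^\alpha\xi$ in $L^q$, you can pass to the limit directly in $\int_{\R^n}F\cdot\nabla^\alpha\xi_k\di x=-\int_{\R^n}\xi_k\di\divm^\alpha F$ using H\"older on the left and dominated convergence (with respect to the finite measure $|\divm^\alpha F|$, since $|\xi_k|\le\|\xi\|_{L^\infty}$ and $\xi_k\to\xi$ pointwise) on the right. The detour through $F_\eps$ and the tightness argument for $\divm^\alpha F_\eps\weakto\divm^\alpha F$ can be dropped entirely.
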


\begin{proof} 
The proof is analogous to the one of~\cite{Comi-et-al21}*{Prop.~3}, so we only sketch it for the reader's convenience.
By a routine regularization-by-convolution argument, it is not restrictive to assume that  
$\xi\in W^{1,q}(\R^n)\cap\Lip_b(\R^n)\cap C^\infty(\R^n)$.
Letting $(\eta_R)_{R>0}\subset C^\infty_c(\R^n)$ be a family of cut-off functions as in~\cite{Comi-Stefani19}*{Sec.~3.3}, by~\cite{Comi-Stefani22-A}*{Lems.~2.3 and~2.4} we can write
\begin{equation}
\label{eq:R_sobolev_test}
\int_{\R^n} \eta_R\, F \cdot \nabla^\alpha\xi\di x
=
\int_{\R^n} F \cdot \nabla^\alpha(\eta_R\xi)\di x
-
\int_{\R^n} \xi \, F \cdot\nabla^\alpha\eta_R\di x
-
\int_{\R^n} F \cdot \nabla^\alpha_{\rm NL}(\eta_R,\xi)\di x
\end{equation} 
for all $R>0$.
Moreover, since $\xi\eta_R\in C^\infty_c(\R^n)$, we have
\begin{equation*}
\int_{\R^n} F \cdot \nabla^\alpha(\eta_R\xi)\di x
=
-
\int_{\R^n} \eta_R\xi\di  \divm^\alpha F
\end{equation*}
for all $R>0$.
Since 
\begin{equation*}
\lim_{R\to+\infty}
\int_{\R^n} \xi \, F \cdot\nabla^\alpha\eta_R\di x
=
\lim_{R\to+\infty}
\int_{\R^n} F \cdot \nabla^\alpha_{\rm NL}(\eta_R,\xi)\di x
=0,
\end{equation*}
the conclusion follows by passing to the limit as $R\to+\infty$ in~\eqref{eq:R_sobolev_test}.
\end{proof}

\subsection{Relation between \texorpdfstring{$\DM^{\alpha,p}$ and $\DM^{1,p}$}{fractional DM and classical DM vector fields}}
\label{subsec:relation_DM_with_DM_frac}

We now deal with point~\eqref{item:abs_frac_div_intermediate} of \cref{res:abs_frac_div}.
To this aim, we study the relationship between $\DM^{1,p}$ and $\DM^{\alpha,p}$ vector fields.

As one may expect, $\DM^{1,p}$ vector fields can be regarded as $\DM^{\alpha,p}$ vector fields, but only locally with respect to the divergence-measure.
For $\alpha\in(0,1)$ and $p\in[1,+\infty]$, we write $F\in\DM^{\alpha,p}_{\loc}(\R^n)$ if $F\in L^p(\R^n;\R^n)$ and, for any $U\subset\R^n$ bounded open set, 
\begin{equation*}
\sup\set*{\int_{\R^n}F\cdot\nabla^\alpha \xi\di x : \xi\in C^\infty_c(\R^n),\ \|\xi\|_{L^\infty(\R^n)}\le 1,\ \supp\xi\subset U}
<+\infty.
\end{equation*}
Consequently, the Radon measure $\divm^\alpha F\in\M_{\loc}(\R^n)$ given by~\eqref{eq:DM_alpha_p_duality} may be such that  $|\divm^\alpha F|(\R^n)=+\infty$.
This issue is quite normal, and essentially due to the properties of Riesz potential, in view of the representation $\nabla^\alpha=\nabla I_{1-\alpha}$, see~\cite{Comi-Stefani19}*{Sec.~2.3}.

\begin{lemma}[Inclusion]
\label{res:DM_into_DM_frac}
If $\alpha\in(0,1)$ and $p\in[1,+\infty]$, then $\DM^{1,p}(\R^n)\subset\DM^{\alpha,p}_{\loc}(\R^n)$. 
\end{lemma}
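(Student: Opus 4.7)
The plan is to exploit the representation $\nabla^\alpha = \nabla I_{1-\alpha}$ on smooth functions (see~\cite{Comi-Stefani19}*{Sec.~2.3}) to reduce the fractional integration-by-parts for $F \in \DM^{1,p}(\R^n)$ to the classical formula~\eqref{eq:ibp_classical}. Given $\xi \in C^\infty_c(U)$ with $U \subset \R^n$ a fixed bounded open set and $\|\xi\|_{L^\infty(\R^n)} \le 1$, the target identity I would aim for is
\[
\int_{\R^n} F \cdot \nabla^\alpha \xi \di x \;=\; -\int_{\R^n} I_{1-\alpha}\xi \di \divm F,
\]
after which the local bound follows at once: one has the estimate $\|I_{1-\alpha}\xi\|_{L^\infty(\R^n)} \le C(n,\alpha)\,\mathrm{diam}(U)^{1-\alpha}$, as a consequence of the uniform bound $\sup_{x\in\R^n}\int_U |x-y|^{-(n-1+\alpha)}\di y < +\infty$ (recall that $n-1+\alpha<n$), which makes the supremum defining $\DM^{\alpha,p}_{\loc}(\R^n)$ finite on every bounded~$U$.

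The main technical obstacle is that $\varphi:=I_{1-\alpha}\xi$ is smooth but \emph{not} compactly supported, so~\eqref{eq:ibp_classical} cannot be applied to it directly. I would circumvent this via a standard cutoff argument: picking $\eta_R \in C^\infty_c(\R^n)$ with $\eta_R \equiv 1$ on $B_R$, $\supp\eta_R \subset B_{2R}$ and $|\nabla \eta_R| \le C/R$, one has $\eta_R\varphi \in C^\infty_c(\R^n)$, so the classical formula~\eqref{eq:ibp_classical} combined with the Leibniz rule $\nabla(\eta_R\varphi) = \eta_R \nabla\varphi + \varphi\,\nabla\eta_R$ gives
\[
\int_{\R^n} F \cdot \eta_R\,\nabla\varphi \di x \;=\; -\int_{\R^n} \eta_R\varphi \di \divm F \;-\; \int_{\R^n} F \cdot \varphi\,\nabla \eta_R \di x,
\]
and I would then pass to the limit $R \to +\infty$ in each of the three terms.

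The workhorse is the decay $|\varphi(x)| = O(|x|^{-(n-1+\alpha)})$ and $|\nabla\varphi(x)| = O(|x|^{-(n+\alpha)})$ as $|x|\to+\infty$, which is immediate from the Riesz kernel of $I_{1-\alpha}$ and the compact support of~$\xi$. Together with H\"older's inequality, it yields $|F|\,|\nabla\varphi|\in L^1(\R^n)$, so dominated convergence handles the left-hand side; boundedness of $\varphi$ and finiteness of $|\divm F|(\R^n)$ handle the first term on the right via dominated convergence against the measure $|\divm F|$; and the cutoff error vanishes because
\[
\left|\int_{\R^n} F \cdot \varphi\,\nabla\eta_R \di x\right| \lesssim \tfrac{1}{R}\,\|F\|_{L^p(\R^n)}\,\|\varphi\|_{L^q(B_{2R}\setminus B_R)} \lesssim R^{\,n/q-(n+\alpha)}\,\|F\|_{L^p(\R^n)},
\]
whose exponent is strictly negative for every $q\in[1,+\infty]$ (with $\frac1p+\frac1q=1$), since $n/q \le n < n+\alpha$.

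The only genuinely conceptual point---as opposed to a technical hurdle---is that the inclusion has to be only \emph{local}: precisely because $\varphi = I_{1-\alpha}\xi$ is never compactly supported, the bound $\|I_{1-\alpha}\xi\|_{L^\infty}$ depends on $\mathrm{diam}(U)$, and one cannot expect $|\divm^\alpha F|(\R^n)<+\infty$ in general, as already flagged by the authors in the paragraph preceding the statement.
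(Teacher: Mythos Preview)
Your proposal is correct and follows essentially the same approach as the paper: both use the representation $\nabla^\alpha\xi=\nabla I_{1-\alpha}\xi$ to obtain the identity $\int_{\R^n}F\cdot\nabla^\alpha\xi\di x=-\int_{\R^n}I_{1-\alpha}\xi\di\divm F$, and then bound the right-hand side via $\|I_{1-\alpha}\xi\|_{L^\infty(\R^n)}\le C_{n,\alpha}\,\mathrm{diam}(U)^{1-\alpha}$ (which the paper cites as~\cite{Comi-Stefani19}*{Lem.~2.4}). The only difference is cosmetic: the paper simply asserts the integration-by-parts against the non-compactly supported test $I_{1-\alpha}\xi\in C^\infty_b(\R^n)$ with gradient in $L^{p'}$, whereas you spell out the underlying cutoff argument explicitly---which is a perfectly valid (and arguably cleaner) way to justify that step.
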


\begin{proof}
Let $F\in\DM^{1,p}(\R^n)$.
Given $\xi\in C^\infty_c(\R^n)$, since $I_{1-\alpha}\xi\in C^\infty_b(\R^n)$ with $\nabla^\alpha\xi=\nabla I_{1-\alpha}\xi\in L^{p'}(\R^n)$, we can write
\begin{equation*}
\int_{\R^n}F\cdot\nabla^\alpha \xi\di x
=
\int_{\R^n}F\cdot\nabla I_{1-\alpha}\xi\di x
=
-
\int_{\R^n}I_{1-\alpha}\xi\di \divm F.
\end{equation*}
Hence, for any bounded open set $U\supset\supp\xi$, by~\cite{Comi-Stefani19}*{Lem.~2.4} we can find a constant $C_{n,\alpha,U}>0$, depending only on $n$, $\alpha$ and $\mathrm{diam}(U)$, such that  
\begin{equation*}
\abs*{\,\int_{\R^n}F\cdot\nabla^\alpha \xi\di x
\,}
\le 
C_{n,\alpha,U}|\divm F|(\R^n)\|\xi\|_{L^\infty(\R^n)}.
\end{equation*} 
This implies that $F\in\DM^{\alpha,p}_{\loc}(\R^n)$, as desired.
\end{proof}

The inclusion given by \cref{res:DM_into_DM_frac} can be somewhat reversed, as done in \cref{res:DM_frac_to_DM} below.
Note that this result, besides providing analogues of~\cite{Comi-Stefani19}*{Lem.~3.28}, \cite{Comi-Stefani22-A}*{Lem.~3.7} and~\cite{Comi-et-al21}*{Prop.~4}, proves point~\eqref{item:abs_frac_div_intermediate} of \cref{res:abs_frac_div}

\begin{lemma}[Relation between $\DM^{\alpha,p}$ and $\DM^{1,p}$] \label{res:DM_frac_to_DM}
Let $\alpha \in (0,1)$, $p\in\left(1,\frac{n}{1-\alpha}\right)$ and $q = \frac{np}{n - (1-\alpha)p}$. 
If $F \in \DM^{\alpha, p}(\R^n)$, then $G = I_{1 - \alpha} F \in \DM^{1,q}(\R^n)$, with
\begin{equation*}
\|G\|_{L^q(\R^n; \,\R^n)}
\le 
c_{n,\alpha,p}\,\|F\|_{L^p(\R^n;\, \R^n)}
\quad\text{and}\quad
\divm\, G = \divm^\alpha F\ \text{in}\ \M(\R^{n}).
\end{equation*}
As a consequence, the operator $I_{1-\alpha}\colon \DM^{\alpha,p}(\R^n)\to \DM^{1,q}(\R^n)$ is continuous.
Moreover, for $p \in \left [ \frac{n}{n-\alpha} , \frac{n}{1-\alpha} \right )$, if $F \in \DM^{\alpha,p}(\R^{n})$ then $|\divm^\alpha F|(B) = 0$ on Borel sets $B\subset\R^n$ of $\sigma$-finite $\Haus{n - \frac{q}{q - 1}}$ measure.  
\end{lemma}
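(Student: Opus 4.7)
The plan is to combine three ingredients: the Hardy–Littlewood–Sobolev inequality, the self-adjointness of the Riesz potential, and the factorization $\nabla^\alpha=\nabla I_{1-\alpha}$ on smooth compactly supported functions.

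First, I would obtain the $L^q$-bound on $G$ directly from \eqref{eq:Riesz_potential_boundedness}. The assumption $p\in(1,n/(1-\alpha))$ is equivalent to $1<p<q<\infty$ together with $\frac1q=\frac1p-\frac{1-\alpha}n$, so applying \eqref{eq:Riesz_potential_boundedness} componentwise gives $G=I_{1-\alpha}F\in L^q(\R^n;\R^n)$ with $\|G\|_{L^q}\le c_{n,\alpha,p}\|F\|_{L^p}$.

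Next, I would verify \eqref{eq:ibp_classical} for $G$, with $\divm G=\divm^\alpha F$. Fix $\xi\in C^\infty_c(\R^n)$. Since $\nabla\xi$ has compact support and $F\in L^p$, a splitting of the Riesz kernel into $|x-y|<1$ and $|x-y|\ge1$ combined with Hölder's inequality shows that
\begin{equation*}
\int_{\R^n}\!\int_{\R^n}\frac{|F(y)||\nabla\xi(x)|}{|x-y|^{n-1+\alpha}}\di y\di x<+\infty,
\end{equation*}
so Fubini together with the symmetry of the Riesz kernel yields
\begin{equation*}
\int_{\R^n} G\cdot\nabla\xi\di x
=\int_{\R^n}(I_{1-\alpha}F)\cdot\nabla\xi\di x
=\int_{\R^n}F\cdot I_{1-\alpha}(\nabla\xi)\di x.
\end{equation*}
Differentiation under the integral sign shows that $I_{1-\alpha}(\nabla\xi)=\nabla(I_{1-\alpha}\xi)=\nabla^\alpha\xi$ pointwise, so by \eqref{eq:DM_alpha_p_duality} the right-hand side equals $-\int_{\R^n}\xi\di\divm^\alpha F$. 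Arbitrariness of $\xi$ forces $G\in\DM^{1,q}(\R^n)$ with $\divm G=\divm^\alpha F$ in $\M(\R^n)$. The continuity of $I_{1-\alpha}\colon\DM^{\alpha,p}(\R^n)\to\DM^{1,q}(\R^n)$ is then immediate from the $L^q$-estimate together with $|\divm G|(\R^n)=|\divm^\alpha F|(\R^n)$.

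Finally, for the absolute continuity statement, I would transfer \cref{res:abs_div_classical}\eqref{item:classical_subcritical}–(iii) from $G$ to $F$. A direct computation gives
\begin{equation*}
\frac{q}{q-1}=\frac{np}{n(p-1)+(1-\alpha)p}=\frac{p}{p-1+(1-\alpha)\frac pn},
\end{equation*}
and for $p\in[n/(n-\alpha),n/(1-\alpha))$ the corresponding $q$ lies in $[n/(n-1),+\infty)$. Hence the intermediate case of \cref{res:abs_div_classical} applies to $G$, giving $|\divm G|(B)=0$ on Borel sets of $\sigma$-finite $\Haus{n-q/(q-1)}$ measure, and the identity $\divm G=\divm^\alpha F$ carries the same vanishing to $|\divm^\alpha F|$. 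The only mildly delicate step is the Fubini justification, but since the singular kernel is paired against a compactly supported smooth factor the absolute integrability is elementary.
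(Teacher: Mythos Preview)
Your proof is correct and follows essentially the same route as the paper's: Hardy--Littlewood--Sobolev for the $L^q$-bound, Fubini together with $\nabla^\alpha\xi=I_{1-\alpha}\nabla\xi$ to identify $\divm\,G=\divm^\alpha F$, and then transfer of the classical absolute continuity result for $\DM^{1,q}$ back to $F$. The only cosmetic difference is that the paper justifies Fubini by noting $I_{1-\alpha}|\nabla\xi|\in L^{q'}$ via Hardy--Littlewood--Sobolev, whereas you split the kernel directly; both arguments are equivalent here.
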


\begin{proof}
Let $p'=\frac{p}{p-1}$, $q'=\frac q{q-1}$ and note that $r = \frac{n p'}{n + (1 - \alpha)p'}\in\left(1,\frac{n}{1-\alpha}\right)$.
By the Hardy--Littlewood--Sobolev inequality, we immediately get that $G = I_{1 - \alpha} F \in L^q(\R^n;\R^n)$.
Moreover, given $\xi \in C^{\infty}_{c}(\R^{n})$, we clearly have $I_{1 - \alpha} |\nabla \xi| \in L^{q'}(\R^{n})$, because $|\nabla \xi|\in L^r(\R^n)$.
Hence, by Fubini Theorem, we can write
\begin{equation}
\label{eq:ibp_Riesz}
\int_{\R^{n}} F \cdot \nabla^{\alpha} \phi \di x = \int_{\R^{n}} F \cdot I_{1 - \alpha} \nabla \phi \di x = \int_{\R^{n}} G \cdot \nabla \phi \di x
\end{equation}
for all $\xi \in C^{\infty}_{c}(\R^{n})$, proving that $\divm^{\alpha} F = \divm\, G$ in $\M(\R^{n})$.
The remaining part of the statement easily follows from \cref{res:abs_div_classical} (also see~\cite{Silhavy05}*{Th.~3.2}).
\end{proof}

\subsection{Decay estimates}
\label{subsec:decay_estimates}

We now deal with point~\eqref{item:abs_frac_div_supercritical} of \cref{res:abs_frac_div}.
To this aim, we prove some decay estimates of the fractional divergence-measure on balls.

Let us begin with the following result, which may be considered as a toy case for the more general result in \cref{res:decay_estimate_p} below. 

\begin{lemma}[Decay estimate for $\divm^\alpha F\ge0$]
\label{res:decay_estimate_pos}
Let $\alpha \in (0, 1]$ and $p \in [1, + \infty]$. 
If $F \in \DM^{\alpha, p}(\R^n)$ satisfies $\divm^{\alpha} F \ge 0$ on some open set $A\subset\R^n$, then 
\begin{equation} 
\label{eq:decay_estimate_pos}
\divm^{\alpha}F(B_{r}(x)) \le C_{n, \alpha, p} \, \|F\|_{L^{p}(\R^n; \,\R^{n})}\, r^{n - \alpha - \frac{n}{p}}. \end{equation}
for all $x\in A$ and $r>0$ such that  $B_{2r}(x)\subset A$.
\end{lemma}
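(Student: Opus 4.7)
The plan is a duality argument with a rescaled cut-off. First, fix once and for all a reference test function $\eta\in C^\infty_c(B_2)$ with $0\le\eta\le1$ and $\eta\equiv1$ on $B_1$, and for each admissible pair $(x,r)$ with $B_{2r}(x)\subset A$ set $\xi_{x,r}(y):=\eta\bigl(\tfrac{y-x}{r}\bigr)$. By construction $\xi_{x,r}\in C^\infty_c(B_{2r}(x))\subset C^\infty_c(A)$, $0\le\xi_{x,r}\le1$, and $\xi_{x,r}\equiv1$ on $B_r(x)$.

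Next, I would combine the sign hypothesis $\divm^\alpha F\ge0$ on $A$ with the integration-by-parts formula~\eqref{eq:DM_alpha_p_duality} and Hölder's inequality. Since $\supp\xi_{x,r}\subset A$ and $\chi_{B_r(x)}\le\xi_{x,r}\le1$ on $\R^n$, these ingredients give at once
\begin{equation*}
\divm^\alpha F(B_r(x))
\le\int_{\R^n}\xi_{x,r}\di\divm^\alpha F
=-\int_{\R^n}F\cdot\nabla^\alpha\xi_{x,r}\di y
\le\|F\|_{L^p(\R^n;\R^n)}\,\|\nabla^\alpha\xi_{x,r}\|_{L^{p'}(\R^n;\R^n)},
\end{equation*}
where $p'$ denotes the Hölder conjugate of~$p$ (with the usual conventions $p'=1$ for $p=+\infty$ and $p'=+\infty$ for $p=1$).

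Finally, I would read off the $r$-scaling of $\|\nabla^\alpha\xi_{x,r}\|_{L^{p'}}$. A direct change of variables in the defining integral~\eqref{eq:frac_nabla} yields the pointwise identity $\nabla^\alpha\xi_{x,r}(x+r\,\cdot\,)=r^{-\alpha}\,\nabla^\alpha\eta$, and a further change of variables then produces
\begin{equation*}
\|\nabla^\alpha\xi_{x,r}\|_{L^{p'}(\R^n;\R^n)}
=r^{n/p'-\alpha}\,\|\nabla^\alpha\eta\|_{L^{p'}(\R^n;\R^n)}
=r^{n-\alpha-n/p}\,\|\nabla^\alpha\eta\|_{L^{p'}(\R^n;\R^n)},
\end{equation*}
so that~\eqref{eq:decay_estimate_pos} follows with $C_{n,\alpha,p}=\|\nabla^\alpha\eta\|_{L^{p'}}$. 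The only small verification is that $\nabla^\alpha\eta\in L^{p'}(\R^n;\R^n)$ for every $p'\in[1,+\infty]$; this follows from $\nabla^\alpha\eta\in L^\infty$ (by smoothness of~$\eta$) together with the pointwise decay $|\nabla^\alpha\eta(y)|\le C_\eta\,|y|^{-n-\alpha}$ for $|y|$ large, itself immediate from~\eqref{eq:frac_nabla} and the compact support of~$\eta$. No genuine obstacle arises; the scaling identity is the true substance of the argument.
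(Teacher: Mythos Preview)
Your proof is correct and follows essentially the same approach as the paper: a nonnegative cut-off supported in $B_{2r}(x)$ and equal to $1$ on $B_r(x)$, the duality formula~\eqref{eq:DM_alpha_p_duality}, H\"older's inequality, and the scaling identity $\nabla^\alpha\bigl(\eta(\tfrac{\cdot-x}{r})\bigr)=r^{-\alpha}(\nabla^\alpha\eta)(\tfrac{\cdot-x}{r})$. Your additional verification that $\nabla^\alpha\eta\in L^{p'}(\R^n;\R^n)$ for all $p'\in[1,+\infty]$ is a welcome detail that the paper leaves implicit.
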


\begin{proof}
Let $\xi \in C^{\infty}_{c}(B_2)$ be such that $\xi \ge 0$ and $\xi \equiv 1$ on $B_1$.
Then, for $x\in A$ and $r>0$ such that $B_{2r}(x)\subset A$, we can estimate
\begin{equation*} \divm^{\alpha}F(B_{r}(x)) \le \int_{\R^n} \xi\left (\frac{y - x}{r} \right ) \di \divm^{\alpha}F(y) = - \int_{\R^n} F(y) \cdot ( \nabla^{\alpha} \xi) \left (\frac{y - x}{r} \right ) r^{- \alpha} \di y. \end{equation*}
Thus we easily get
\begin{align*} \divm^{\alpha}F(B_{r}(x)) & \le \|F\|_{L^{p}(\R^n; \R^{n})}\, r^{- \alpha}\left ( \int_{\R^n} |\nabla^{\alpha} \xi(y)|^{p'} r^{n} \di y \right )^{\frac{1}{p'}}  \\
& = \|F\|_{L^{p}(\R^n; \R^{n})}\, \|\nabla^{\alpha} \xi \|_{L^{p'}(\R^n;\, \R^{n})}\, r^{n - \alpha - \frac{n}{p}}, 
\end{align*}
from which the conclusion immediately follows.
\end{proof}

\cref{res:decay_estimate_pos}, despite its simplicity, allows to recover the following rigidity result, which may be seen as the natural fractional analogue of~\cite{Phuc-Torres08}*{Th.~3.1}.

\begin{proposition}[Rigidity]
Let $\alpha \in (0, 1]$ and $p \in \left [1, \frac{n}{n-\alpha} \right ]$. 
If $F \in \DM^{\alpha, p}(\R^n)$ satisfies $\divm^{\alpha} F \ge 0$, then $\divm^\alpha F = 0$.
\end{proposition}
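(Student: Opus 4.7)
The plan is to split the argument according to whether $p$ lies strictly below or exactly at the critical exponent $\frac{n}{n-\alpha}$.

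For $p\in\left[1,\frac{n}{n-\alpha}\right)$, I would apply \cref{res:decay_estimate_pos} with $A=\R^n$: since the sign condition holds on all of $\R^n$, the inclusion $B_{2r}(x)\subset A$ is automatic, so that
\begin{equation*}
0\le\divm^\alpha F(B_r(x))\le C_{n,\alpha,p}\,\|F\|_{L^p(\R^n;\R^n)}\,r^{n-\alpha-\frac{n}{p}}
\end{equation*}
for every $x\in\R^n$ and every $r>0$. In this regime the exponent is strictly negative, so sending $r\to+\infty$ and using continuity from below of the finite non-negative measure $\divm^\alpha F$ forces $\divm^\alpha F(\R^n)=0$, and hence $\divm^\alpha F=0$ by non-negativity.

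The genuinely delicate case is the endpoint $p=\frac{n}{n-\alpha}$, where the decay estimate becomes scale-invariant and gives no information on the total mass. For $\alpha\in(0,1)$, my plan is to invoke \cref{res:DM_frac_to_DM}: the exponent $p=\frac{n}{n-\alpha}$ belongs to $\left(1,\frac{n}{1-\alpha}\right)$ (using $n\ge2$), so $G=I_{1-\alpha}F\in\DM^{1,q}(\R^n)$ with $q=\frac{np}{n-(1-\alpha)p}$ simplifying to $q=\frac{n}{n-1}$, and $\divm G=\divm^\alpha F\ge0$ in $\M(\R^n)$. This puts $G$ precisely at the critical exponent of the classical Phuc--Torres rigidity theorem~\cite{Phuc-Torres08}*{Th.~3.1}, which then yields $\divm G=0$ and therefore $\divm^\alpha F=0$. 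The remaining case $\alpha=1$ is not new and coincides with Phuc--Torres itself.

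The main obstacle is exactly this endpoint: the test-function proof behind \cref{res:decay_estimate_pos} is scale-critical at $p=\frac{n}{n-\alpha}$, so no direct decay-at-infinity argument is available in the spirit of the first regime, and one must instead transfer the sign information from the fractional to the classical setting via the Riesz potential, exploiting that the conjugate exponent produced by \cref{res:DM_frac_to_DM} lands exactly on the critical value $\frac{n}{n-1}$ covered by the classical rigidity.
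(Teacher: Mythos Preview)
Your argument in the subcritical regime $p<\frac{n}{n-\alpha}$ is exactly the paper's. At the endpoint $p=\frac{n}{n-\alpha}$, however, you take a genuinely different route. The paper works directly: it shows that $I_\alpha\divm^\alpha F=\div^0 F$ in $L^{\frac{n}{n-\alpha}}(\R^n)$ via \cref{res:sobolev_test} and \cref{rem:alpha_0}, and then exploits the pointwise lower bound
\[
I_\alpha\divm^\alpha F(x)\ge \tilde c_{n,\alpha}\,\frac{\divm^\alpha F(B_R)}{(|x|+R)^{n-\alpha}},
\]
which fails to be in $L^{\frac{n}{n-\alpha}}$ unless the total mass vanishes. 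Your reduction via \cref{res:DM_frac_to_DM} is correct and more modular: the computation $q=\frac{np}{n-(1-\alpha)p}=\frac{n}{n-1}$ is right, the hypothesis $p\in\bigl(1,\frac{n}{1-\alpha}\bigr)$ of that lemma is satisfied thanks to $n\ge2$, and you land exactly at the classical critical exponent covered by~\cite{Phuc-Torres08}*{Th.~3.1}. The trade-off is that your proof imports the classical rigidity as a black box, whereas the paper's argument is self-contained within the fractional framework and does not rely on~\cite{Phuc-Torres08}; on the other hand, your route avoids the machinery around $\div^0$ and the results from~\cite{Brue-et-al20} that the paper invokes.
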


\begin{proof}
If $p<\frac{n}{n-\alpha}$, so that $n - \alpha - \frac{n}{p}<0$, then
\begin{equation*} 0 \le \divm^{\alpha}F(B_{r}) \le C_{n, \alpha, p}  \|F\|_{L^{p}(\R^n; \R^{n})} r^{n - \alpha - \frac{n}{p}} \end{equation*}
for all $r>0$ by \cref{res:decay_estimate_pos} in the case $x=0$.
Hence the conclusion follows by taking the limit as $r \to + \infty$.
If instead $p = \frac{n}{n-\alpha}$, then $I_\alpha \divm^\alpha F=\div^0 F$ in  $L^{\frac n{n-\alpha}}(\R^n)$, since
\begin{equation*}
\int_{\R^n}I_\alpha\xi \di\divm^\alpha F
=
-
\int_{\R^n} F\cdot\nabla^\alpha I_\alpha\xi\di x
=
- \int_{\R^n}F\cdot \nabla^0\xi\di x = \int_{\R^n} \xi \div^0 F \, \di x
\end{equation*}
for all $\xi\in C^\infty_c(\R^n)$ by \cref{res:sobolev_test}, \cref{rem:alpha_0} and \cite{Brue-et-al20}*{Prop. 7 and Lem.~26}.
However, for all $R > 0$ and $x \in \R^n$ we also have 
\begin{equation*}
I_\alpha \divm^\alpha F(x) 
\ge 
c_{n, \alpha} 
\int_{B_R} \frac{1}{|x-y|^{n-\alpha}} \di \divm^{\alpha} F(y) 
\ge 
\tilde{c}_{n, \alpha}\, 
\frac{\divm^{\alpha} F(B_R)}{(|x| + R)^{n-\alpha}}, 
\end{equation*}
and thus $I_\alpha \divm^\alpha F\notin L^{\frac n{n-\alpha}}$  unless $\divm^\alpha F=0$.
The proof is complete.
\end{proof}

To remove the non-negativity assumption $\divm^\alpha F\ge0$ from the conclusion~\eqref{eq:decay_estimate_pos} in  \cref{res:decay_estimate_pos} we need to deal with integration-by-parts for $\DM^{\alpha,p}$ fields on balls. 
The following result is the analogue of~\cite{Comi-et-al21}*{Th.~9}.

\begin{theorem}[Integration by parts on balls] \label{res:int_by_parts_E_ball} 
Let $\alpha \in (0, 1)$ and 
$p\in\left(\frac{1}{1-\alpha},+\infty\right]$. 
If $F \in \DM^{\alpha, p}(\R^{n})$, 
$\xi\in\Lip_c(\R^{n})$
and $x \in \R^n$, then 
\begin{equation} 
\label{eq:int_by_parts_on_ball} 
\int_{B_r(x)} F \cdot \nabla^{\alpha} \xi \di y 
+ 
\int_{\R^{n}} \xi F \cdot \nabla^{\alpha} \chi_{B_r(x)}\di y  
+ 
\int_{\R^{n}} F \cdot \nabla^{\alpha}_{\rm NL} (\chi_{B_r(x)}, \xi) \di y
=
-\int_{B_r(x)} \xi \di \divm^{\alpha} F
\end{equation}
for $\Leb{1}$-a.e.\ $r > 0$.
\end{theorem}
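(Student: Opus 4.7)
The plan is to derive the identity by approximating $\chi_{B_r(x)}$ by smooth functions, applying \cref{res:sobolev_test} to the product with $\xi$, and expanding via the pointwise Leibniz rule for $\nabla^\alpha$. Concretely, for $\delta \in (0, r)$ I set $\phi_{r,\delta} = \rho_\delta * \chi_{B_r(x)} \in C^\infty_c(\R^n)$, so that $0 \le \phi_{r,\delta} \le 1$ and $\phi_{r,\delta} \to \chi_{B_r(x)}$ pointwise off $\partial B_r(x)$. Since $\xi \phi_{r,\delta} \in \Lip_c(\R^n) \subset W^{1,q}(\R^n) \cap C_b(\R^n)$, \cref{res:sobolev_test} together with the pointwise Leibniz identity
\begin{equation*}
\nabla^\alpha(\xi \phi_{r,\delta}) = \phi_{r,\delta}\, \nabla^\alpha \xi + \xi\, \nabla^\alpha \phi_{r,\delta} + \nabla^\alpha_{\rm NL}(\phi_{r,\delta}, \xi),
\end{equation*}
valid by~\cite{Comi-Stefani22-A}*{Lems.~2.3 and~2.4} (as already exploited in the proof of \cref{res:sobolev_test}), yields the analogue of~\eqref{eq:int_by_parts_on_ball} with $\phi_{r,\delta}$ in place of $\chi_{B_r(x)}$.

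It then remains to pass to the limit $\delta \to 0^+$. For the right-hand side and for the term $\int \phi_{r,\delta}\, F \cdot \nabla^\alpha \xi \di y$, dominated convergence suffices: for $\xi \in \Lip_c$, the pointwise bound $|\nabla^\alpha \xi(y)| \lesssim (1+|y|)^{-n-\alpha}$ at infinity together with local boundedness gives $\nabla^\alpha \xi \in L^{p'}(\R^n; \R^n)$ and hence $F \cdot \nabla^\alpha \xi \in L^1(\R^n)$, while $|\divm^\alpha F|(\partial B_r(x)) = 0$ for all but countably many $r > 0$. The crucial observation for the remaining two terms is that the hypothesis $p > 1/(1-\alpha)$ is equivalent to $\alpha p' < 1$, which is the sharp threshold ensuring $\chi_{B_r(x)} \in W^{\alpha, p'}(\R^n)$ by a direct computation of its Slobodeckij seminorm. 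By continuity of $\nabla^\alpha$ from $W^{\alpha, p'}$ to $L^{p'}$ established in~\cite{Comi-Stefani19}, this yields $\nabla^\alpha \chi_{B_r(x)} \in L^{p'}(\R^n; \R^n)$, and the identity $\nabla^\alpha \phi_{r,\delta} = \rho_\delta * \nabla^\alpha \chi_{B_r(x)}$ then gives the strong convergence $\nabla^\alpha \phi_{r,\delta} \to \nabla^\alpha \chi_{B_r(x)}$ in $L^{p'}$; Hölder against $\xi F \in L^p$ closes the argument for the second integral.

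The main obstacle is the nonlocal term $\int_{\R^n} F \cdot \nabla^\alpha_{\rm NL}(\phi_{r,\delta}, \xi) \di y$. By the bilinearity of $\nabla^\alpha_{\rm NL}$ it suffices to show $\nabla^\alpha_{\rm NL}(\phi_{r,\delta} - \chi_{B_r(x)}, \xi) \to 0$ in a norm that pairs with $F \in L^p$. I would combine the continuity estimates for $\nabla^\alpha_{\rm NL}$ on Besov and Sobolev spaces from~\cite{Comi-Stefani22-L}*{Cor.~2.7} with the strong convergence of $\phi_{r,\delta}$ to $\chi_{B_r(x)}$ in $W^{\alpha, p'}(\R^n)$ and the Lipschitz regularity of $\xi$ to deduce the required $L^{p'}$-convergence; the quantitative bilinear estimates pinned down in the earlier work (e.g.\ the norm inequality stated in \cref{res:leibniz_rules} for $\div^\alpha_{\rm NL}$) suggest that a symmetric estimate is available in the relevant regime. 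The restriction to $\Leb{1}$-a.e.\ $r > 0$ is dictated solely by the vanishing of $|\divm^\alpha F|$ on $\partial B_r(x)$, in agreement with~\cite{Comi-et-al21}*{Th.~9} and its classical counterpart.
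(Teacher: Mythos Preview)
Your approach is correct and takes a genuinely different route from the paper's. The paper, following~\cite{Comi-et-al21}*{Th.~9}, first treats the case $p=+\infty$ by approximating $\chi_{B_r(x)}$ with a \emph{specific} piecewise-linear Lipschitz cutoff $h_{\eps,r,x}$, for which an explicit integral formula for $\nabla^\alpha h_{\eps,r,x}$ is available via~\cite{Comi-Stefani19}*{Lem.~5.1}; the passage to the limit in the critical term $\int_{\R^n}\xi F\cdot\nabla^\alpha h_{\eps,r,x}\di y$ is then carried out by rewriting it as an average over annuli $B_{r+\eps}(x)\setminus B_r(x)$ and invoking Lebesgue's Differentiation Theorem. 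The case $p<+\infty$ is afterwards reduced to $p=+\infty$ by regularizing $F$ (not the indicator) and using that $\nabla^\alpha\chi_{B_r(x)}$ and $\nabla^\alpha_{\rm NL}(\chi_{B_r(x)},\xi)$ lie in $L^{p'}$. You instead mollify $\chi_{B_r(x)}$ directly and handle all $p\in\left(\frac{1}{1-\alpha},+\infty\right]$ uniformly via strong convergence $\rho_\delta*\chi_{B_r(x)}\to\chi_{B_r(x)}$ in $W^{\alpha,p'}(\R^n)$, which is legitimate precisely because $\alpha p'<1$.

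Your hesitation on the non-local term is unwarranted and can be dispelled without invoking~\cite{Comi-Stefani22-L}*{Cor.~2.7} in full: since $\xi\in\Lip_c(\R^n)$, splitting the inner integral at $|z|=1$ and applying Minkowski's integral inequality (near) and Young's convolution inequality (far) gives $\|\nabla^\alpha_{\rm NL}(g,\xi)\|_{L^{p'}(\R^n;\R^n)}\le C(\xi)\,\|g\|_{L^{p'}(\R^n)}$, so even plain $L^{p'}$-convergence of $\phi_{r,\delta}$ suffices. Your route is more streamlined and avoids the case distinction; the paper's is more explicit and recovers the representation of $\nabla^\alpha\chi_{B_r(x)}$ through the surface integral over $\partial B_r(x)$ as a by-product.
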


\begin{proof}
The proof is very similar to that of~\cite{Comi-et-al21}*{Th.~9}, so we only sketch it for the reader's convenience.
Fix $x \in \R^{n}$ and $\xi \in \Lip_{c}(\R^{n})$ be fixed.

In the case $p=+\infty$, we consider $h_{\eps, r, x}\in\Lip_c(\R^n)$ for $\eps>0$ and $r>0$ defined as
\begin{equation*} 
h_{\eps, r, x}(y) 
= 
\begin{cases} 
1 
& \text{if} \ 0 \le |y - x| \le r, \\[3mm]
\dfrac{r + \eps - |y - x|}{\eps} 
& \text{if} \ r < |y - x| < r + \eps, \\[4mm] 
0 
& \text{if} \  |y -x| \ge r + \eps, 
\end{cases} 
\end{equation*}
for all $y\in\R^n$.
By~\cite{Comi-Stefani19}*{Lem.~5.1}, 
$\nabla^\alpha h_{\eps,r,x}\in L^1(\R^n;\R^n)$ with
\begin{equation} \label{eq:frac_grad_cutoff_ball} 
\nabla^{\alpha} h_{\eps, r, x}(y) 
= 
\frac{\mu_{n, \alpha}}{\eps (n + \alpha - 1)} 
\int_{B_{r + \eps}(x) \setminus B_r(x)} 
\frac{x - z}{|x - z|}\,|z - y|^{1 - n - \alpha}\di z 
\end{equation}
for $\Leb{n}$-a.e.\ $y \in \R^{n}$. 

Since $h_{\eps, r, x}(y) \to \chi_{\closure{B_r(x)}}(y)$ as $\eps \to 0^+$ for all $y \in \R^{n}$ and $|\divm^{\alpha} F|(\de B_r(x)) = 0$ for $\Leb{1}$-a.e.\ $r > 0$, we can use $h_{\eps, r, x}$ to approximate $\chi_{B_r(x)}$ in~\eqref{eq:int_by_parts_on_ball}.
On the one hand, since $h_{\eps, r, x}\,\phi\in\Lip_c(\R^n;\R^n)$, by \cref{res:sobolev_test} we have 
\begin{equation} \label{eq:IBP_E_1} 
\int_{\R^{n}} F \cdot \nabla^{\alpha}(h_{\eps, r, x}\,\phi) \di y 
= 
- \int_{\R^{n}} h_{\eps, r, x}\,\phi \di \divm^{\alpha} F.
\end{equation}
On the other hand, by~\cite{Comi-Stefani19}*{Lem.~2.6}, we can compute
\begin{equation} 
\label{eq:Leibniz_rule_h_phi} 
\nabla^{\alpha}(h_{\eps, r, x}\,\phi) 
= 
h_{\eps, r, x}\, \nabla^{\alpha}\phi 
+ 
\phi \, \nabla^{\alpha} h_{\eps, r, x} 
+ 
\nabla_{\rm NL}^{\alpha}(h_{\eps, r, x}, \phi). 
\end{equation}
One then has to deal with each term of the right-hand side of~\eqref{eq:Leibniz_rule_h_phi} separately. 
The most difficult term is the second one, for which one has to observe that, by~\eqref{eq:frac_grad_cutoff_ball}, 
\begin{align*} 
\int_{\R^{n}} \xi(y) \, F(y)
&\cdot 
\nabla^{\alpha} h_{\eps, r, x}(y) \di y
\\ 
&= 
\frac{\mu_{n, \alpha}}{\eps (n + \alpha - 1)} 
\int_{\R^{n}} \xi(y) \, F(y)
\cdot 
\int_{B_{r+\eps}(x) \setminus B_r(x)} \frac{x - z}{|x - z|} |z - y|^{1 - n - \alpha}\di z \di y
\\
& = 
\int_{B_{r+\eps}(x) \setminus B_r(x)}\frac{x - z}{|x - z|}\cdot \int_{\R^{n}} F(y) \, \xi(y) \,|z - y|^{1 - n - \alpha} \di y \di z 
\\
& = 
\int_{r}^{r + \eps} \int_{\partial B_{\rho}(x)} \frac{x - z}{|x - z|}\cdot \int_{\R^{n}} F(y) \, \xi(y)\,|z - y|^{1 - n - \alpha} \di y \di  \Haus{n - 1}(z) \di  \rho.
\end{align*}
Hence, by Lebesgue's Differentiation Theorem,
\begin{equation*}
\begin{split}
\lim_{\eps\to0}\,
\frac{1}{\eps}
\int_{\R^{n}} 
& 
\xi(y) \, F(y) \cdot \int_{B_{r+\eps}(x) \setminus B_r(x)} \frac{x - z}{|x - z|}\, |z - y|^{1 - n - \alpha}\di z \di y
\\
&=
\int_{\partial B_r(x)} \frac{x - z}{|x - z|}\cdot \int_{\R^{n}} F(y) \, \xi(y)\,|z - y|^{1 - n - \alpha} \di y \di  \Haus{n - 1}(z)
\\
&=
\int_{\R^{n}} \xi(y) \, F(y) \cdot\int_{\R^n} |z - y|^{1 - n - \alpha}\di D\chi_{B_r(x)}(z)\di y
\end{split}
\end{equation*}
for $\Leb{1}$-a.e.\ $r > 0$. 
Thus, by~\cite{Comi-Stefani19}*{Th.~3.18, Eq.~(3.26)}, we get that 
\begin{equation} \label{eq:IBP_conv_3} 
\begin{split}
\lim_{\eps\to0}
\int_{\R^{n}} 
& 
\xi \, F \cdot \nabla^{\alpha}  h_{\eps, r, x} \di y 
\\
& = 
\frac{\mu_{n, \alpha}}{n + \alpha - 1} 
\int_{\R^{n}} \xi(y) \, F(y) \cdot\int_{\R^{n}} |z - y|^{1 - n - \alpha} \di  D \chi_{B_{r}(x)}(z)  \di y 
\\
& = 
\int_{\R^{n}} \xi \, F \cdot \nabla^{\alpha} \chi_{B_{r}(x)} \di y  
\end{split}
\end{equation}
for $\Leb{1}$-a.e.\ $r > 0$.
The other terms are easier and hence left to the reader.

In the case $p \in \left (\frac{1}{1 - \alpha}, +\infty \right )$, instead, one regularizes  $F\in\DM^{\alpha,p}(\R^n)$ to $(F_{\eps})_{\eps>0} 
\subset 
\DM^{\alpha, p}(\R^{n})
\cap 
L^{\infty}(\R^{n};\R^n) 
\cap 
C^{\infty}(\R^{n}; \R^n)$  via convolution to reduce to the previous case $p=+\infty$.
The conclusion then follows by exploiting the convergence properties given by \cref{res:approx_DM} and recalling that, thanks to \cite{Comi-et-al21}*{Cor.~1}, $\nabla^{\alpha} \chi_{B_{r}(x)} \in L^{q}(\R^{n}; \R^{n})$ for any $p \in \left (\frac{1}{1 - \alpha}, \infty \right )$, where $q = \frac{p}{p-1}$, and that $\nabla^\alpha_{\rm NL}(\chi_{B_r(x)}, \xi) \in L^q(\R^n; \R^n)$ as well, thanks to \cite{Comi-Stefani22-L}*{Cor.~2.7}.
We leave the details to the reader.
\end{proof}

We are now ready to generalize \cref{res:decay_estimate_pos} beyond the non-negativity assumption, as done in~\cite{Comi-et-al21}*{Th.~10} for $BV^{\alpha,p}$ functions.

\begin{theorem}[Decay estimates for $\DM^{\alpha,p}$ functions for $p>\frac 1{1-\alpha}$]
\label{res:decay_estimate_p} 
Let $\alpha \in (0,1)$ and
$p\in\left(\frac{1}{1-\alpha},+\infty\right]$.
There exist two constants $A_{n,\alpha,p}, B_{n,\alpha,p} > 0$, depending on $n$, $\alpha$ and $p$ only, with the following property. 
If $F \in \DM^{\alpha,p}(\R^n)$ then,
for $|\divm^{\alpha} F|$-a.e.\ $x \in \R^n$, there exists $r_x > 0$ such that
\begin{equation}
\label{eq:decay_div_alpha_F_B_1_prime} 
|\divm^{\alpha} F|(B_r(x)) 
\le 
A_{n,\alpha,p} 
\|F\|_{L^p(\R^n; \R^n)}\,
r^{\frac{n}{q} - \alpha} 
\end{equation}
and
\begin{equation}
\label{eq:decay_div_alpha_F_B_2_prime}  
|\divm^{\alpha} (\chi_{B_r(x)} F)|(\R^{n}) 
\le 
B_{n,\alpha,p} 
\|F\|_{L^p(\R^n; \R^n)}\,
r^{\frac{n}{q} - \alpha} 
\end{equation}
for all $r \in (0, r_x)$, 
where $q\in[1,+\infty)$ is such that $\frac1p+\frac1q=1$.
\end{theorem}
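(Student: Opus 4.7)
The plan is to derive both decay estimates from the integration-by-parts formula on balls of \cref{res:int_by_parts_E_ball}. As a first step, I would establish the measure-theoretic Leibniz-type identity
\begin{equation*}
\divm^{\alpha}(\chi_{B_r(x)} F)
=
\chi_{B_r(x)}\, \divm^{\alpha} F
+
F \cdot \nabla^{\alpha} \chi_{B_r(x)} \, \Leb{n}
+
\div^{\alpha}_{\rm NL}(\chi_{B_r(x)}, F) \, \Leb{n}
\quad
\text{in}\ \M(\R^n),
\end{equation*}
valid for every $x \in \R^n$ and $\Leb{1}$-a.e.\ $r > 0$. This follows from~\eqref{eq:int_by_parts_on_ball} tested against a generic $\xi \in \Lip_c(\R^n)$, combined with the Fubini-type symmetrization $\int F \cdot \nabla^\alpha_{\rm NL}(\chi_{B_r(x)}, \xi)\di y = \int \xi\, \div^\alpha_{\rm NL}(\chi_{B_r(x)}, F)\di y$ justified by~\cite{Comi-Stefani22-L}*{Cor.~2.7}.

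Next, I would bound the two correction terms in $L^1(\R^n)$. H\"older's inequality gives $\|F \cdot \nabla^\alpha \chi_{B_r(x)}\|_{L^1} \le \|F\|_{L^p}\, \|\nabla^\alpha \chi_{B_r(x)}\|_{L^q}$, while the continuity estimate in~\cite{Comi-Stefani22-L}*{Cor.~2.7} yields $\|\div^\alpha_{\rm NL}(\chi_{B_r(x)}, F)\|_{L^1} \le \mu_{n,\alpha}\, [\chi_{B_r(x)}]_{B^\alpha_{q,1}}\, \|F\|_{L^p}$. The dilation identity $\nabla^\alpha \chi_{B_r(x)}(x + r\,\cdot) = r^{-\alpha}\, \nabla^\alpha \chi_{B_1}$ and its Besov analogue give the scalings $\|\nabla^\alpha \chi_{B_r(x)}\|_{L^q} = c\, r^{n/q - \alpha}$ and $[\chi_{B_r(x)}]_{B^\alpha_{q,1}} = c'\, r^{n/q - \alpha}$, with $c, c' < +\infty$ precisely under the standing hypothesis $p > 1/(1-\alpha)$: indeed, this is equivalent to $q < 1/\alpha$, which guarantees both the $L^q$-integrability of $\nabla^\alpha \chi_{B_1}$ from~\cite{Comi-et-al21}*{Cor.~1} and the convergence of the Besov integral $\int_{|h|<1} |h|^{1/q - n - \alpha} \di h$. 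Combining everything via the triangle inequality for measures yields the two-sided comparison
\begin{equation*}
\abs*{|\divm^{\alpha}(\chi_{B_r(x)} F)|(\R^n) - |\divm^{\alpha} F|(B_r(x))}
\le
C_{n, \alpha, p}\, \|F\|_{L^p(\R^n;\,\R^n)}\, r^{n/q - \alpha}
\quad
\text{for $\Leb{1}$-a.e.\ $r>0$,}
\end{equation*}
reducing the two estimates~\eqref{eq:decay_div_alpha_F_B_1_prime} and~\eqref{eq:decay_div_alpha_F_B_2_prime} to a single one, modulo adjusting the final constants $A, B$.

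The hard part is then to close the argument by establishing the absolute decay of one of the two quantities at $|\divm^{\alpha} F|$-a.e.\ $x$ with a $\|F\|_{L^p}$-dependent but otherwise universal constant. My plan is to reduce to smooth fields $F^\eps = F \ast \rho_\eps$ via~\cref{res:approx_DM}: splitting the defining integral for $\div^\alpha(\chi_{B_r(x)} F^\eps)$ at the sphere $\partial B_r(x)$, one obtains the pointwise representation
\begin{equation*}
\div^\alpha (\chi_{B_r(x)} F^\eps)(y)
=
\begin{cases}
\displaystyle\div^\alpha F^\eps(y)
-
\mu_{n,\alpha}\int_{\R^n \setminus B_r(x)}\frac{F^\eps(z)\cdot(z-y)}{|z-y|^{n+\alpha+1}}\di z
&
\text{for}\ y \in B_r(x),
\\[4mm]
\displaystyle\mu_{n,\alpha}\int_{B_r(x)}\frac{F^\eps(z)\cdot(z-y)}{|z-y|^{n+\alpha+1}}\di z
&
\text{for}\ y \notin B_r(x).
\end{cases}
\end{equation*}
The off-diagonal Riesz-type integrals appearing here can be estimated in $L^1(\R^n)$ by $C_{n,\alpha,p}\, \|F^\eps\|_{L^p}\, r^{n/q - \alpha}$ precisely under $p > 1/(1-\alpha)$, which is what makes the kernel $|z-y|^{-(n+\alpha)}$ integrable near $\partial B_r(x)$ in the relevant $L^p$--$L^q$ duality; this computation is morally the same as the one needed for the finiteness of $[\chi_{B_1}]_{B^\alpha_{q,1}}$. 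Passing to the limit $\eps \to 0^+$ through the lower semicontinuity of the total variation from~\cref{res:approx_DM} and restricting to $|\divm^\alpha F|$-a.e.\ $x$ (in particular away from the countably many atoms of $|\divm^\alpha F|$) finally yields the claim for all $r \in (0, r_x)$.
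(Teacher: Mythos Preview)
Your reduction step is correct and matches how the paper deduces~\eqref{eq:decay_div_alpha_F_B_2_prime} from~\eqref{eq:decay_div_alpha_F_B_1_prime}: the Leibniz identity for $\divm^\alpha(\chi_{B_r(x)}F)$ together with the $L^q$ scaling of $\nabla^\alpha\chi_{B_r}$ and of the nonlocal term indeed yields the two-sided comparison $\bigl||\divm^\alpha(\chi_{B_r(x)}F)|(\R^n)-|\divm^\alpha F|(B_r(x))\bigr|\le C\,\|F\|_{L^p}\,r^{n/q-\alpha}$.

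The gap is in the ``hard part''. Your pointwise splitting of $\div^\alpha(\chi_{B_r(x)}F^\eps)$ contains, for $y\in B_r(x)$, the term $\div^\alpha F^\eps(y)$ itself. Hence your $L^1$ estimate reads
\begin{equation*}
|\divm^\alpha(\chi_{B_r(x)}F^\eps)|(\R^n)
\le
\int_{B_r(x)}|\div^\alpha F^\eps|\di y
+
C\,\|F^\eps\|_{L^p}\,r^{n/q-\alpha},
\end{equation*}
and the first term on the right is precisely $|\divm^\alpha F^\eps|(B_r(x))$, which after $\eps\to0^+$ becomes $|\divm^\alpha F|(B_r(x))$. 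You have only rederived the comparison already obtained in the first step; no absolute decay with a universal constant ever appears. Removing atoms of $|\divm^\alpha F|$ does not help either: it only gives $|\divm^\alpha F|(B_r(x))\to0$, not the quantitative rate $A\,\|F\|_{L^p}\,r^{n/q-\alpha}$ with $A=A_{n,\alpha,p}$ independent of $F$ and $x$.

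What is missing is the sign-matching device. Write $\divm^\alpha F=\sigma_F^\alpha\,|\divm^\alpha F|$ with $|\sigma_F^\alpha|=1$ $|\divm^\alpha F|$-a.e., and test~\eqref{eq:int_by_parts_on_ball} against the Lipschitz cutoff $\xi_{x,r}$ equal to the constant $\sigma_F^\alpha(x)$ on $B_r(x)$ and vanishing outside $B_{2r}(x)$. At every $|\divm^\alpha F|$-Lebesgue point of $\sigma_F^\alpha$ (hence $|\divm^\alpha F|$-a.e.) there is $r_x>0$ such that
\begin{equation*}
\int_{B_r(x)}\xi_{x,r}\di\divm^\alpha F
=
\int_{B_r(x)}\sigma_F^\alpha(x)\,\sigma_F^\alpha(y)\di|\divm^\alpha F|(y)
\ge
\tfrac12\,|\divm^\alpha F|(B_r(x))
\quad\text{for }r<r_x,
\end{equation*}
while the left-hand side is bounded, via~\eqref{eq:int_by_parts_on_ball} and the same scaling estimates you already used, by $C_{n,\alpha,p}\,\|F\|_{L^p}\,r^{n/q-\alpha}$. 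This is the step that produces the universal constant and breaks the circularity; once~\eqref{eq:decay_div_alpha_F_B_1_prime} is in place, your comparison argument correctly yields~\eqref{eq:decay_div_alpha_F_B_2_prime}.
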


\begin{proof}
The proof follows the same line of that of~\cite{Comi-et-al21}*{Th.~10}, so we only sketch it for the reader's ease.
Since $F \in \DM^{\alpha,p}(\R^n)$, by the Polar Decomposition Theorem for Radon measures there exists a Borel function $\sigma_{F}^{\alpha}\colon\R^n\to\R$ such that 
\begin{equation}
\label{eq:polar_decomp_div_alpha}
\divm^{\alpha} F 
= 
\sigma_{F}^{\alpha}\,|\divm^{\alpha} F| 
\quad\text{with}\quad 
|\sigma_{F}^{\alpha}(x)| = 1\ 
\text{for} \ 
|\divm^{\alpha} F|\text{-a.e.}\ 
x \in \R^{n}.
\end{equation}
For $x\in\R^n$ such that $|\sigma_{F}^{\alpha}(x)| = 1$, given $r>0$ we define $\xi_{x,r}\colon\R^n\to\R$ as
\begin{equation} 
\label{eq:phi_x_r_def}
\xi_{x, r}(y) = 
\begin{cases} 
\sigma_{F}^{\alpha}(x) 
& \text{if} \,  y \in B_{r}(x),\\
\sigma_{F}^{\alpha}(x) \left (2 - \frac{|y - x|}{r} \right ) 
& \text{if} \, y \in B_{2r}(x) \setminus B_{r}(x),\\
0 
& \text{if} \, y \notin B_{2 r}(x),
\end{cases}
\end{equation}
for all $y\in\R^n$.
Since 
$\xi_{x, r}\in \Lip_{c}(\R^{n})$ 
with
$\|\phi\|_{L^{\infty}(\R^{n})} \le 1$, we can find $r_x\in(0,1)$ such that
\begin{equation} 
\label{eq:decay_estimate_1_p} 
\int_{B_r(x)} \xi_{x, r}(y) \di \divm^{\alpha} F(y) 
= 
\int_{B_{r}(x)} \sigma_{F}^{\alpha}(x) \, \sigma_{F}^{\alpha}(y) \di |\divm^{\alpha}F|(y) 
\ge 
\frac{1}{2} |\divm^{\alpha} F|(B_r(x)) 
\end{equation}
for all $r\in(0,r_x)$. 
Also, by~\eqref{eq:int_by_parts_on_ball}, we can estimate
\begin{equation}
\label{eq:decay_estimate_1.5_p}
\begin{split} 
\int_{B_r(x)} \xi_{x, r} \di \divm^{\alpha} F 
&\le 
\abs*{\,\int_{B_{r}(x)} F \cdot \nabla^{\alpha} \xi_{x, r} \di y\,} 
+
\abs*{\,\int_{\R^{n}} \xi_{x, r} \, F \cdot \nabla^{\alpha} \chi_{B_r(x)} \di x\,}\\
&\quad+
\abs*{\, \int_{\R^{n}} F \cdot \nabla^{\alpha}_{\rm NL} (\chi_{B_r(x)}, \xi_{x, r}) \di y \, } 
\end{split}
\end{equation}
for $\Leb{1}$-a.e.\ $r\in(0,r_x)$.
Hence the inequality in~\eqref{eq:decay_div_alpha_F_B_1_prime} follows by estimating the three terms in the right-hand side of~\eqref{eq:decay_estimate_1.5_p}, recalling~the scaling property of $\nabla^\alpha$, \cite{Comi-et-al21}*{Cor.~1} and \cite{Comi-Stefani22-L}*{Cor.~2.7}.
For the inequality in~\eqref{eq:decay_div_alpha_F_B_2_prime}, instead, one notes that, given any $\xi \in \Lip_{c}(\R^{n})$ with $\|\xi\|_{L^{\infty}(\R^{n})} \le 1$, from~\eqref{eq:int_by_parts_on_ball} it holds
\begin{align*} 
\abs*{\,\int_{B_r(x)} F \cdot \nabla^{\alpha} \xi \di y\,} 
&\le 
|\divm^{\alpha} F|(B_r(x)) 
+ 
\|F\|_{L^{p}(\R^{n};\,\R^n)} \, 
\|\nabla^{\alpha} \chi_{B_r(x)}\|_{L^{q}(\R^n; \R^{n})}\\
&\quad+ 
\|F\|_{L^{p}(\R^{n};\,\R^n)} \|\nabla^{\alpha}_{\rm NL}(\chi_{B_r(x)}, \xi)\|_{L^{q}(\R^{n}; \R^n)}
\end{align*}
for $\Leb{1}$-a.e.\ $r\in(0,r_x)$. The conclusion thus follows from~\eqref{eq:decay_div_alpha_F_B_1_prime} and again~\cite{Comi-et-al21}*{Cor.~1} and \cite{Comi-Stefani22-L}*{Cor.~2.7}.
We leave the details to the reader.
\end{proof}

As a consequence of \cref{res:decay_estimate_p}, we get the following result, in particular proving the validity of point~\eqref{item:abs_frac_div_supercritical} in \cref{res:abs_frac_div}.
Note that \cref{res:abs_continuity_p} below is actually relevant only in the case of point \eqref{item:abs_frac_div_supercritical} of \cref{res:abs_frac_div}, since 
$
n - \frac{p}{p-1+(1 - \alpha)\frac pn} \le n - \alpha - \frac{n}{p}
$
if and only if $p \ge \frac{n}{1 - \alpha}$ and $p \le \frac{n}{n - \alpha}$, but in this second case both exponents are negative.

\begin{corollary}[$|\divm^{\alpha} F|\ll \Haus{n -\alpha - \frac{n}{p}}$ for $p>\frac1{1-\alpha}$] 
\label{res:abs_continuity_p} 
Let $\alpha \in (0,1)$ and
$p\in\left(\frac{1}{1-\alpha},+\infty\right]$.
If $F \in \DM^{\alpha,p}(\R^n)$, then there exists a $|\divm^\alpha F|$-negligible set $Z_F^{\alpha,p}\subset\R^n$ such that
\begin{equation}
\label{eq:abs_cont_estimate}
|\divm^{\alpha} F| 
\le 
2^{\frac nq -\alpha}\,
\frac{A_{n,\alpha,p}}{\omega_{\frac nq-\alpha}} \,
\|F\|_{L^p(\R^{n}; \R^n)} \, 
\Haus{\frac nq - \alpha} \res \R^{n} \setminus Z_F^{\alpha,p},
\end{equation}
where $A_{n,\alpha,p}$ is as in~\eqref{eq:decay_div_alpha_F_B_1_prime} and $q\in[1,+\infty)$ is such that $\frac1p+\frac1q=1$.
\end{corollary}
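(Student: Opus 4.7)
The approach is a direct application of \cref{res:decay_estimate_p} combined with a standard measure-theoretic differentiation principle for Radon measures. Set $s := \frac{n}{q} - \alpha$ and observe first that $s > 0$ under the running hypotheses: indeed, $p > \frac{1}{1-\alpha}$ gives $q < \frac{1}{\alpha}$, hence $\frac{n}{q} > n\alpha \ge \alpha$ since we always assume $n \ge 2$. So the exponent $s$ appearing in \eqref{eq:abs_cont_estimate} is a legitimate positive Hausdorff dimension, and the normalizing constant $\omega_s = \pi^{s/2}/\Gamma(s/2 + 1)$ is well defined.

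The plan is as follows. I would let $Z_F^{\alpha,p} \subset \R^n$ be the $|\divm^\alpha F|$-negligible set provided by \cref{res:decay_estimate_p}, outside of which the decay estimate \eqref{eq:decay_div_alpha_F_B_1_prime} holds for all sufficiently small radii. Dividing \eqref{eq:decay_div_alpha_F_B_1_prime} by $\omega_s r^s$ and passing to the $\limsup$ as $r \to 0^+$, for every $x \in \R^n \setminus Z_F^{\alpha,p}$ one obtains
\[
\limsup_{r \to 0^+} \frac{|\divm^\alpha F|(B_r(x))}{\omega_s\, r^s} \le \frac{A_{n,\alpha,p}}{\omega_s}\, \|F\|_{L^p(\R^n;\,\R^n)}.
\]
At this stage I would invoke the classical differentiation theorem for Radon measures (see, e.g., \cite{AFP00}*{Thm.~2.56}): if $\mu$ is a positive Radon measure on $\R^n$, $E \subset \R^n$ is a Borel set, and $\limsup_{r \to 0^+} \mu(B_r(x))/(\omega_s r^s) \le \lambda$ for every $x \in E$, then $\mu \res E \le 2^s\, \lambda\, \Haus{s} \res E$. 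Applying this with $\mu = |\divm^\alpha F|$, with $E = \R^n \setminus Z_F^{\alpha,p}$, and with $\lambda = A_{n,\alpha,p}\, \|F\|_{L^p(\R^n;\,\R^n)} / \omega_s$, I would obtain \eqref{eq:abs_cont_estimate} at once.

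Since the substantial analytic content is entirely concentrated in \cref{res:decay_estimate_p}, no real obstacle is anticipated here: the present corollary is a formal upgrade of the pointwise ball decay to a global comparison with $\Haus{s}$. The only minor bookkeeping is the Borel measurability of the exceptional set $Z_F^{\alpha,p}$, which is standard and follows from the fact that its complement can be written as a countable intersection of Borel conditions indexed by rational radii and rational thresholds. The numerical constant $2^s/\omega_s$ in \eqref{eq:abs_cont_estimate} matches exactly the constant produced by Theorem~2.56 of \cite{AFP00}, so no refinement of the argument is required.
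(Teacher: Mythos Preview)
Your proof is correct and follows essentially the same route as the paper: both arguments take the exceptional set from \cref{res:decay_estimate_p}, bound the upper $s$-density of $|\divm^\alpha F|$ on its complement via~\eqref{eq:decay_div_alpha_F_B_1_prime}, and then invoke~\cite{AFP00}*{Th.~2.56} to upgrade to the global comparison with $\Haus{s}$. Your additional remarks on the positivity of $s$ and the Borel measurability of $Z_F^{\alpha,p}$ are harmless embellishments that the paper leaves implicit.
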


\begin{proof}
By \cref{res:decay_estimate_p}, there exists a set $Z_{F}^{\alpha,p}\subset\R^n$, which we can assume to be Borel without loss of generality, such that $|\divm^{\alpha}F|(Z_{F}^{\alpha,p}) = 0$ and~\eqref{eq:decay_div_alpha_F_B_1_prime} holds for any $x \notin Z_{F}^{\alpha,p}$. 
Hence, for all 
$x\in\R^{n}\setminus Z_{F}^{\alpha,p}$, 
we have
\begin{equation*} 
\Theta^{*}_{\frac nq - \alpha}(|\divm^{\alpha} F|, x) 
= 
\limsup_{r \to 0^+} \frac{|\divm^{\alpha} F|(B_{r}(x))}{\omega_{\frac nq - \alpha} r^{\frac nq - \alpha}} 
\le 
\frac{A_{n, \alpha,p}}{\omega_{\frac nq - \alpha}}\, 
\|F\|_{L^{p}(\R^{n}; \R^n)}.
\end{equation*}
Inequality~\eqref{eq:abs_cont_estimate} thus follows from~\cite{AFP00}*{Th.~2.56}.
\end{proof}

\begin{remark}\cref{res:abs_continuity_p} holds true also in the limit case as $\alpha \to 1^-$. 
Indeed, if $F \in \DM^{1, \infty}(\R^n)$, then~\cite{Silhavy19}*{Prop.~1} implies that
\begin{equation*}
|\divm F| \le c_{n} \|F\|_{L^{\infty}(\R^n; \,\R^n)} \Haus{n-1} \res (\R^n \setminus Z^{1, \infty}_{F}),
\end{equation*}
for some constant $c_n > 0$ and any $|\divm F|$-negligible set $Z^{1, \infty}_{F}\subset\R^n$.
\end{remark}

\subsection{Proof of \texorpdfstring{\cref{res:leibniz_rules}}{fractional Leibniz rules}}
\label{subsec:proof_leibinz}

We begin with the following technical result.

\begin{lemma}[Zero total divergence-measure]
\label{res:zero_total_mesure}
Let $\alpha\in (0,1]$ and $p \in \left [1, \frac{n}{n-\alpha} \right )$. 
If $F \in \DM^{\alpha,p}(\R^n)$, then $\divm^\alpha F(\R^n)=0$.
\end{lemma}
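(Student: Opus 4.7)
The plan is to exploit the integration-by-parts formula \eqref{eq:DM_alpha_p_duality} against a family of rescaled cut-off functions and use a scaling argument together with the duality between $L^p$ and $L^{p'}$. The subcritical condition $p < \frac{n}{n-\alpha}$ exactly guarantees that the resulting boundary-like term vanishes in the limit.

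More precisely, I fix $\eta \in C^\infty_c(\R^n)$ with $\eta \equiv 1$ on $B_1$, $\supp \eta \subset B_2$, and $\|\eta\|_{L^\infty} = 1$. For $R > 0$, I set $\eta_R(x) = \eta(x/R)$, so that $\eta_R \in C^\infty_c(\R^n)$, $\eta_R \to 1$ pointwise on $\R^n$, and by the standard scaling of the fractional gradient I have $\nabla^\alpha \eta_R(x) = R^{-\alpha}(\nabla^\alpha \eta)(x/R)$, whence
\begin{equation*}
\|\nabla^\alpha \eta_R\|_{L^{p'}(\R^n;\,\R^n)} = R^{\frac{n}{p'} - \alpha}\,\|\nabla^\alpha \eta\|_{L^{p'}(\R^n;\,\R^n)},
\end{equation*}
where $p' \in [1,+\infty]$ is the conjugate exponent of $p$. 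Note that $\nabla^\alpha \eta$ is a smooth function with the decay $|\nabla^\alpha \eta(x)| \lesssim (1+|x|)^{-(n+\alpha)}$ at infinity (see, e.g., \cite{Comi-Stefani19}*{Sec.~2.2}), so $\nabla^\alpha \eta \in L^{p'}(\R^n;\R^n)$ for every $p' \in [1,+\infty]$.

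Next, I apply \eqref{eq:DM_alpha_p_duality} to the test function $\eta_R$ to get
\begin{equation*}
\int_{\R^n} \eta_R \di \divm^\alpha F = -\int_{\R^n} F \cdot \nabla^\alpha \eta_R \di x.
\end{equation*}
By H\"older's inequality, the right-hand side is bounded in absolute value by
\begin{equation*}
\|F\|_{L^p(\R^n;\,\R^n)}\,\|\nabla^\alpha \eta_R\|_{L^{p'}(\R^n;\,\R^n)} = \|F\|_{L^p(\R^n;\,\R^n)}\,\|\nabla^\alpha \eta\|_{L^{p'}(\R^n;\,\R^n)}\,R^{\frac{n}{p'} - \alpha}.
\end{equation*}
The subcritical hypothesis $p < \frac{n}{n-\alpha}$ is equivalent to $p' > \frac{n}{\alpha}$, i.e., $\frac{n}{p'} - \alpha < 0$, so the right-hand side tends to $0$ as $R \to +\infty$. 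On the other hand, since $\|\eta_R\|_{L^\infty} \le 1$, $\eta_R \to 1$ pointwise, and $|\divm^\alpha F|(\R^n) < +\infty$, the dominated convergence theorem gives
\begin{equation*}
\lim_{R\to+\infty}\int_{\R^n} \eta_R \di \divm^\alpha F = \divm^\alpha F(\R^n).
\end{equation*}
Combining the two limits yields $\divm^\alpha F(\R^n) = 0$, as desired.

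The only delicate point is verifying the decay of $\nabla^\alpha \eta$ needed for $\nabla^\alpha \eta \in L^{p'}$ uniformly in all cases $p' \in [1,+\infty]$, including the endpoint $p=1$ where $p'=+\infty$; this is however routine given the explicit pointwise formula \eqref{eq:frac_nabla} and the compact support of $\eta$, and it is already noted in the authors' previous works. Once this is in place, the scaling argument is essentially immediate.
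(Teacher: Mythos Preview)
Your proof is correct and follows essentially the same approach as the paper's own proof: both test the integration-by-parts identity~\eqref{eq:DM_alpha_p_duality} against rescaled cut-offs $\eta_R(x)=\eta(x/R)$, use the $\alpha$-homogeneity of $\nabla^\alpha$ to obtain $\|\nabla^\alpha\eta_R\|_{L^{p'}}=R^{n/p'-\alpha}\|\nabla^\alpha\eta\|_{L^{p'}}$, observe that $p<\frac{n}{n-\alpha}$ forces the exponent to be negative, and then conclude by Dominated Convergence with respect to $|\divm^\alpha F|$. The only extra content in your write-up is the explicit justification that $\nabla^\alpha\eta\in L^{p'}$ for all $p'$, which the paper takes for granted.
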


\begin{proof}
Let $\eta \in C^{\infty}_c(B_2)$ be such that $\eta \equiv 1$ on $B_1$ and set $\eta_k(x) = \eta \left (\frac{x}{k} \right )$ for $k\in\N$ and $x\in\R^n$. 
By~\eqref{eq:DM_alpha_p_duality} and the $\alpha$-homogeneity of the fractional gradient, we have
\begin{align*}
\left | \int_{\R^n} \eta_k \di \divm^\alpha F \right |
&= 
\left | \int_{\R^n} F \cdot \nabla^\alpha \eta_k \di x \right | 
\\
&\le 
k^{\frac{n}{q} - \alpha}
\,
\|F\|_{L^p(\R^n; \,\R^n)} 
\, 
\|\nabla^\alpha \eta \|_{L^q(\R^n; \R^n)} \to 0 \quad
\text{as}\ k \to + \infty
\end{align*}
for $q > \frac{n}{\alpha}$, which means $p < \frac{n}{n - \alpha}$. 
Hence, by the Dominated Convergence Theorem with respect to the measure $|\divm^\alpha F|$, we get that
\begin{equation*}
\divm^\alpha F(\R^n) 
= 
\int_{\R^n} \di \divm^{\alpha} F 
= 
\lim_{k \to + \infty} \int_{\R^n} \eta_k \di \divm^\alpha F
=
0
\end{equation*}
concluding the proof.
\end{proof}

We can now deal with the Leibniz rule for $\DM^{\alpha,p}$ vector fields and bounded continuous Besov functions, in analogy with~\cite{Comi-Stefani22-L}*{Th.~3.1}. 
To this purpose, we need to recall the notion of {\em non-local fractional divergence}
\begin{equation*}
\div^\alpha_{\rm NL} (g, F)(x)
=
\mu_{n,\alpha}
\int_{\R^n}\frac{(g(y)-g(x))(F(y)-F(x)) \cdot (y-x)}{|y-x|^{n+\alpha+1}}\,dy,
\quad
x\in\R^n,	
\end{equation*}
of a couple $(g, F)$, where $g \in \Lip_c(\R^n)$ and $F \in \Lip_c(\R^n; \R^n)$.
The operator $\div^\alpha_{\rm NL}$ can be continuously extended to Lebesgue and Besov spaces, see~\cite{Comi-Stefani22-L}*{Cor.~2.7}.

\begin{theorem}[Leibniz rule for $\DM^{\alpha,p}$ with $C_b\cap B^\alpha_{q,1}$ for $\frac1p+\frac1q=1$]
\label{res:BV_alpha_p_leibniz_C_b}
Let $\alpha\in(0,1)$ and let $p,q\in[1,+\infty]$ be such that $\frac1p+\frac1q=1$.
If $F \in \DM^{\alpha, p}(\R^n)$ and $g\in C_b(\R^n)\cap B^\alpha_{q,1}(\R^n)$, then
$gF \in \DM^{\alpha,r}(\R^n)$ for all $r\in[1,p]$, with $\div^{\alpha}_{\rm NL}(g, F) \in L^1(\R^n)$ and
\begin{equation} 
\label{eq:BV_alpha_p_leibniz_C_b}
\divm^{\alpha}(gF) 
= 
g\, \divm^{\alpha} F 
+ 
F\cdot
\nabla^{\alpha}g\,
\Leb{n} 
+ 
\div^{\alpha}_{\rm NL}(g, F)\,\Leb{n}
\quad 
\text{in}\ \M (\R^n).
\end{equation}
In addition,
\begin{equation}
\label{eq:BV_alpha_p_leibniz_C_b_D_alpha_zero}
\divm^\alpha(gF)(\R^n)=0,
\qquad
\int_{\R^n}\div^\alpha_{\rm NL}(g,F)\di x=0,
\end{equation}
and 
\begin{equation}
\label{eq:BV_alpha_p_leibniz_C_b_ibp}
\int_{\R^n}
F\cdot \nabla^\alpha g\di x
=
-
\int_{\R^n}
g\di \divm^\alpha F.
\end{equation}
\end{theorem}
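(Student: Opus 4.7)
The plan is to derive the Leibniz formula~\eqref{eq:BV_alpha_p_leibniz_C_b} by testing against $\xi\in C^\infty_c(\R^n)$ and reducing to the case of a smooth $g$ via mollification. As a preliminary $L^1$-bound, the substitution $h=y-x$ together with H\"older's inequality in~$x$ (conjugate exponents $q,p$) and the crude bound $\|F(\cdot+h)-F\|_{L^p}\le 2\|F\|_{L^p}$ yields
\begin{equation*}
\|\div^\alpha_{\rm NL}(g,F)\|_{L^1(\R^n)}
\le
\mu_{n,\alpha}\int_{\R^n}\frac{\|g(\cdot+h)-g\|_{L^q(\R^n)}\,\|F(\cdot+h)-F\|_{L^p(\R^n;\,\R^n)}}{|h|^{n+\alpha}}\di h
\le
2\mu_{n,\alpha}\,[g]_{B^\alpha_{q,1}(\R^n)}\,\|F\|_{L^p(\R^n;\,\R^n)}.
\end{equation*}

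Next, I would set $g_\eps=g*\rho_\eps\in C^\infty_b(\R^n)$, so that $g_\eps\xi\in C^\infty_c(\R^n)$ and~\eqref{eq:DM_alpha_p_duality} gives $\int F\cdot\nabla^\alpha(g_\eps\xi)\di x=-\int g_\eps\xi\di\divm^\alpha F$. Expanding $\nabla^\alpha(g_\eps\xi)=g_\eps\nabla^\alpha\xi+\xi\nabla^\alpha g_\eps+\nabla^\alpha_{\rm NL}(g_\eps,\xi)$ via the pointwise non-local Leibniz rule of~\cite{Comi-Stefani19}*{Lem.~2.6}, and recognizing the symmetrization identity
\begin{equation*}
\int_{\R^n}F\cdot\nabla^\alpha_{\rm NL}(g_\eps,\xi)\di x
=
\int_{\R^n}\xi\,\div^\alpha_{\rm NL}(g_\eps,F)\di x
\end{equation*}
(which follows by Fubini from the exchange $(x,y)\leftrightarrow(y,x)$ in the double integrals, justified by the $L^1$-bound above), one obtains~\eqref{eq:BV_alpha_p_leibniz_C_b} with $g_\eps$ in place of~$g$.

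The limit $\eps\to 0^+$ is then treated term by term: dominated convergence (using $g_\eps\to g$ pointwise by continuity of~$g$ and the uniform bound $|g_\eps|\le\|g\|_\infty$) handles the convergence of $\int g_\eps F\cdot\nabla^\alpha\xi\di x$ and of $\int g_\eps\xi\di\divm^\alpha F$; the $L^q$-convergence $\nabla^\alpha g_\eps=(\nabla^\alpha g)*\rho_\eps\to\nabla^\alpha g$ handles $\int\xi F\cdot\nabla^\alpha g_\eps\di x$; and the continuity of the bilinear operator $\div^\alpha_{\rm NL}\colon B^\alpha_{q,1}(\R^n)\times L^p(\R^n;\R^n)\to L^1(\R^n)$ from~\cite{Comi-Stefani22-L}*{Cor.~2.7} handles the non-local term. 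This yields~\eqref{eq:BV_alpha_p_leibniz_C_b} for~$g$; combined with $gF\in L^p(\R^n;\R^n)$ and the finiteness of all summands on the right-hand side, one then concludes that $gF\in\DM^{\alpha,r}(\R^n)$ in the claimed range of~$r$.

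Finally, $\int\div^\alpha_{\rm NL}(g,F)\di x=0$ follows from the $(x,y)$-antisymmetry of the kernel of $\div^\alpha_{\rm NL}$ combined with Fubini. To obtain the integration-by-parts formula~\eqref{eq:BV_alpha_p_leibniz_C_b_ibp} and $\divm^\alpha(gF)(\R^n)=0$, I would test~\eqref{eq:BV_alpha_p_leibniz_C_b} against the cut-off $\eta_k=\eta(\cdot/k)$ from the proof of~\cref{res:zero_total_mesure} and let $k\to\infty$; dominated convergence on the three right-hand-side terms reduces the matter to showing that $\int gF\cdot\nabla^\alpha\eta_k\di x\to 0$. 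I expect this last step to be the main technical obstacle: it is immediate in the regime $p<\frac{n}{n-\alpha}$ via the decay $\|\nabla^\alpha\eta_k\|_{L^q}=k^{n/q-\alpha}\|\nabla^\alpha\eta\|_{L^q}\to 0$, whereas in the complementary regime one would have to approximate $F$ by a compactly supported smoothing $F_m=(F\chi_{B_m})*\rho_{1/m}$, apply~\cref{res:zero_total_mesure} to $gF_m\in L^1(\R^n;\R^n)$ to obtain $\divm^\alpha(gF_m)(\R^n)=0$, and then pass to the limit $m\to\infty$ exploiting the Leibniz formula just established together with the stability of $\divm^\alpha$ under convolution and truncation.
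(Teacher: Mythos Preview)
Your Step~1 is correct but proceeds differently from the paper: you mollify~$g$ and use $g_\eps\xi\in C^\infty_c(\R^n)$ as a test in~\eqref{eq:DM_alpha_p_duality}, whereas the paper keeps~$g$ fixed and mollifies~$F$, observing that $F_\eps=\rho_\eps*F\in W^{1,p}(\R^n;\R^n)\subset B^\alpha_{p,1}(\R^n;\R^n)$, so that the duality~\cite{Comi-Stefani22-L}*{Lem.~2.6} applies directly between $F_\eps$ and $g\xi\in B^\alpha_{q,1}(\R^n)$; the limit $\eps\to0^+$ then only needs the convergence of \cref{res:approx_DM}. Both routes work; yours requires a little extra care in the endpoint $q=+\infty$ (the ``$L^q$-convergence of $\nabla^\alpha g_\eps$'' you invoke fails in general for $q=+\infty$, though dominated convergence against $\xi F\in L^1$ rescues the argument since $\rho_\eps*\nabla^\alpha g\to\nabla^\alpha g$ a.e.\ with uniform $L^\infty$ bound).

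Your Step~2, however, is needlessly convoluted, and the ``main technical obstacle'' you anticipate is not one. You have already established $gF\in L^1(\R^n;\R^n)$ (from $g\in L^q$, $F\in L^p$) and $gF\in\DM^{\alpha,1}(\R^n)$ via Step~1. Hence \cref{res:zero_total_mesure} applied with $p=1$ (which is always $<\tfrac{n}{n-\alpha}$) gives $\divm^\alpha(gF)(\R^n)=0$ directly; equivalently, your vanishing term is simply
\begin{equation*}
\abs*{\int_{\R^n} gF\cdot\nabla^\alpha\eta_k\di x}
\le
\|gF\|_{L^1(\R^n;\,\R^n)}\,\|\nabla^\alpha\eta_k\|_{L^\infty(\R^n;\,\R^n)}
=
k^{-\alpha}\,\|gF\|_{L^1(\R^n;\,\R^n)}\,\|\nabla^\alpha\eta\|_{L^\infty(\R^n;\,\R^n)}
\to 0,
\end{equation*}
with no need to split into regimes of~$p$ or to approximate~$F$ by compactly supported fields. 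Combining this with your (correct) observation $\int_{\R^n}\div^\alpha_{\rm NL}(g,F)\di x=0$ and integrating~\eqref{eq:BV_alpha_p_leibniz_C_b} over~$\R^n$ yields~\eqref{eq:BV_alpha_p_leibniz_C_b_ibp}, exactly as in the paper.
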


\begin{proof}
We mimic the proof of~\cite{Comi-Stefani22-L}*{Th.~3.1}. 
Since $g\in L^q(\R^n)\cap L^\infty(\R^n)$, we have $gF \in L^1(\R^n)\cap L^p(\R^n)$ by H\"older's inequality. 
In addition, \cite{Comi-Stefani22-L}*{Cor.~2.7} implies that $\div^{\alpha}_{\rm NL}(g, F) \in L^1(\R^n)$.
We now divide the proof in two steps.

\smallskip

\textit{Step~1: proof of~\eqref{eq:BV_alpha_p_leibniz_C_b}}.
Let $\xi \in\Lip_c(\R^n)$ be given.
By \cite{Comi-Stefani22-L}*{Lem.~3.2(i)}, we have
\begin{equation*}
\nabla^\alpha(g\xi)
=
g\,\nabla^\alpha \xi
+
\xi \nabla^\alpha g
+\nabla_{\rm NL}^\alpha(g,\xi)
\quad
\text{in}\
L^q(\R^n),
\end{equation*} 
so that
\begin{align*}
\int_{\R^n}g F \cdot \nabla^\alpha \xi \di x 
=
\int_{\R^n}F \cdot \nabla^\alpha(g\xi) \di x
-
\int_{\R^n}
\xi F \cdot\nabla^\alpha g
\di x
-
\int_{\R^n}
F \cdot \nabla^\alpha_{\rm NL}(g,\xi)\di x.
\end{align*}
By \cite{Comi-Stefani22-L}*{Lem.~2.9}, we have that
\begin{equation*}
\int_{\R^n}
F \cdot \nabla^\alpha_{\rm NL}(g,\xi)\,
dx
=
\int_{\R^n}
\xi \, \div^\alpha_{\rm NL}(g,F)\,
dx.
\end{equation*}
Now let $(F_\eps)_{\eps>0}\subset \DM^{\alpha,p}(\R^n)\cap C^\infty(\R^n; \R^n)$ be given by $F_\eps=\rho_\eps*F$ for all $\eps>0$. 
In particular, we have $F_{\eps} \in W^{1, p}(\R^n; \R^n)$ for each $\eps > 0$. Note that $W^{1,p}(\R^n; \R^n) \subset B^{\alpha}_{p,q}(\R^n; \R^n)$ for all $\alpha \in (0,1)$ and $p, q \in [1, +\infty]$, see~\cite{Leoni17}*{Th.~17.33}. 
As a consequence,  $F_\eps\in B^\alpha_{p,1}(\R^n; \R^n)$ for each $\eps>0$.
Since $g\xi \in B^\alpha_{q,1}(\R^n)$, by \cite{Comi-Stefani22-L}*{Lem.~2.6} we can write
\begin{equation*}
\int_{\R^n}F_\eps \cdot
\nabla^\alpha(g\xi)
\di x
=
-
\int_{\R^n}g\xi \,
\div^\alpha F_\eps\di x
\end{equation*}
for all $\eps>0$.
On the one side, we have
\begin{equation*}
\lim_{\eps\to0^+}
\int_{\R^n}F_\eps \cdot \nabla^\alpha(g\xi)
\di x
=
\int_{\R^n}
F \cdot \nabla^\alpha(g\xi)
\di x
\end{equation*}
by H\"older's inequality in the case $p<+\infty$ and by the Dominated Convergence Theorem in the case $p=+\infty$.
On the other side, since $g\xi \in C_c(\R^n)$, we also have
\begin{equation*}
\lim_{\eps\to0^+}
\int_{\R^n}g\xi \, \div^\alpha F_\eps\di x
=
\int_{\R^n}g\xi \di \divm^\alpha F,
\end{equation*}
thanks to \cref{res:approx_DM}.
We thus conclude that 
\begin{equation*}
\int_{\R^n}F \cdot \nabla^\alpha(g\xi)
\di x
=
-
\int_{\R^n}g\xi \di\divm^\alpha F,
\end{equation*}
so that, for all $\xi\in\Lip_c(\R^n)$,
\begin{equation*}
\int_{\R^n}g F \cdot \nabla^\alpha\xi\di x 
=
-
\int_{\R^n}g\xi \di\divm^\alpha F
-
\int_{\R^n}
\xi\, F \cdot\nabla^\alpha g
\di x
-
\int_{\R^n}
\xi \, \div^\alpha_{\rm NL}(g,F)\di x.
\end{equation*}
By a standard approximation argument for the test function, we get~\eqref{eq:BV_alpha_p_leibniz_C_b}. 

\smallskip

\textit{Step~2: proof of~\eqref{eq:BV_alpha_p_leibniz_C_b_D_alpha_zero} and~\eqref{eq:BV_alpha_p_leibniz_C_b_ibp}}.
Since $g F \in \DM^{\alpha,1}(\R^n)$ by Step~1, the first equation in~\eqref{eq:BV_alpha_p_leibniz_C_b_D_alpha_zero} readily follows from \cref{res:zero_total_mesure}.
Moreover, since obviously $\nabla^\alpha_{\rm NL}(g,v)=0$ for all $v\in\R$,
by~\cite{Comi-Stefani22-L}*{Lem.~2.9} we get
\begin{align*}
v
\int_{\R^n}\div^\alpha_{\rm NL}(g,F)\di x
=
\int_{\R^n}v \, \div^\alpha_{\rm NL}(g,F)\di x
=
\int_{\R^n}F \cdot \nabla^\alpha_{\rm NL}(g,v)\di x
=
0
\end{align*} 
for all $v\in\R$ and also the second equation in~\eqref{eq:BV_alpha_p_leibniz_C_b_D_alpha_zero} immediately follows.
By combining~\eqref{eq:BV_alpha_p_leibniz_C_b} with~\eqref{eq:BV_alpha_p_leibniz_C_b_D_alpha_zero}, we get~\eqref{eq:BV_alpha_p_leibniz_C_b_ibp} and the proof is complete.  
\end{proof}

We are now in position to prove our second main result \cref{res:leibniz_rules}.

\begin{proof}[Proof of \cref{res:leibniz_rules}]
The proofs of the cases $p\in\left[1,\frac n{n-\alpha}\right)$, $p\in\left[\frac n{1-\alpha},+\infty\right)$ and $p = + \infty$ are analogous to those of \cite{Comi-Stefani22-L}*{Cors.~3.3, 3.6 and~3.7}, respectively, and are hence omitted. 
We thus deal with the case $p\in\left[\frac n{n-\alpha},\frac n{1-\alpha}\right)$.
We start by noticing that 
$\gamma_{n,p,\alpha} \ge \alpha$ if and only if $p \ge \frac{n}{n-\alpha}$,
so that $B^{\gamma}_{q,1}(\R^n) \subset B^{\alpha}_{q,1}(\R^n)$, thanks to \cite{Leoni17}*{Th.~17.82}. Hence $g \in B^{\alpha}_{q, 1}(\R^n)$ and so $\nabla^\alpha g \in L^q(\R^n; \R^n)$ by \cite{Comi-Stefani22-L}*{Cor.~23 and Lem.~2.6}.
Let $(\rho_\eps)_{\eps > 0}$ be as in \cref{res:approx_DM} and set $g_\eps=\rho_\eps*g$ for all $\eps>0$. 
Arguing as in the proof of~\cite{Comi-Stefani22-L}*{Cor.~3.5}, we can exploit~\cite{Comi-et-al21}*{Sec.~5.1 and Th.~11} to conclude that 
\begin{equation}
\label{eq:g_eps_limit_quasievery_point}
\lim_{\eps\to0^+}
g_\eps(x)
=
g^\star (x)
\quad
\text{for all}\ x\in\R^n\setminus D_g,
\end{equation}
for some set $D_g\subset\R^n$ such that $\Haus{n-\gamma q+\delta}(D_g) = 0$ for any $\delta>0$ sufficiently small. 
Since
\begin{equation*}
n - \frac{nq}{n + (1-\alpha)q} > n - \gamma q \iff 
\gamma > \frac{n}{n + (1-\alpha)q},
\end{equation*}
we conclude that $|\divm^\alpha F|(D_g) = 0$, by \cref{res:abs_frac_div}. Since $g_\eps\in  C_b(\R^n)\cap B^\alpha_{q,1}(\R^n)$ for all $\eps>0$ thanks to \cite{Leoni17}*{Prop.~17.12}, by \cref{res:BV_alpha_p_leibniz_C_b} we get that $g_\eps F \in \DM^{\alpha,p}(\R^n)$, with
\begin{equation*}
\divm^{\alpha}(g_\eps F) 
= 
g_\eps \, \divm^{\alpha} F 
+ 
F\cdot
\nabla^{\alpha}g_\eps \,
\Leb{n} 
+ 
\div^{\alpha}_{\rm NL}(g_\eps, F)\,\Leb{n}
\quad 
\text{in}\ \M (\R^n).
\end{equation*}
Now
$\nabla^\alpha g_\eps
=
\rho_\eps*\nabla^\alpha g$ 
in
$L^q(\R^n;\R^n)$
(for example see~\cite{Comi-Stefani19}*{Lem.~3.5} and its proof), while \cite{Comi-Stefani22-L}*{Cor.~2.7} implies that 
\begin{align*}
\|\div^{\alpha}_{\rm NL}(g_\eps, F)
-
\div^{\alpha}_{\rm NL}(g, F)\|_{L^1(\R^n)}
&=
\|\div^{\alpha}_{\rm NL}(g_\eps-g, F)
\|_{L^1(\R^n)}
\\
&\le
2\mu_{n,\alpha}
\,
\|F\|_{L^p(\R^n; \R^n)}
\,
[g-g_\eps]_{B^\alpha_{q,1}(\R^n)}
\end{align*} 
for all $\eps>0$. 
Therefore, since $\rho_\eps*\nabla^\alpha g \to \nabla^{\alpha} g$ in $L^{q}(\R^n; \R^n)$ and, by~\cite{Leoni17}*{Prop.~17.12}, $[g-g_\eps]_{B^\alpha_{q,1}(\R^n)} \to 0$, the conclusion follows by exploiting~\eqref{eq:g_eps_limit_quasievery_point} and the Dominated Convergence Theorem with respect to the measure $|\divm^\alpha F|$. Finally, equations~\eqref{eq:ping} and~\eqref{eq:pong} can be proved as \eqref{eq:BV_alpha_p_leibniz_C_b_D_alpha_zero} and \eqref{eq:BV_alpha_p_leibniz_C_b_ibp} in \cref{res:BV_alpha_p_leibniz_C_b}.
\end{proof}

\section{Examples}
\label{sec:examples}

In this section, we illustrate some examples concerning \cref{res:abs_frac_div}.

\subsection{Example for \texorpdfstring{point~\eqref{item:abs_frac_div_subcritical} of \cref{res:abs_frac_div}}{point (i) in Theorem 1.6}}
\label{subsec:examples}

\cref{exa:F_frac} below  shows that, if $p \in \left [1, \frac{n}{n - \alpha} \right )$, the fractional divergence-measure of $\DM^{\alpha, p}$ vector fields is not absolutely continuous with respect to $\Haus{\eps}$ for any $\eps > 0$, in general, proving point~\eqref{item:abs_frac_div_subcritical} of \cref{res:abs_frac_div}.

\begin{example}
\label{exa:F_frac}
Let $\alpha \in (0, 1)$, $y, z \in \R^n$, and define 
\begin{equation}
\label{eq:esempio_frac}
F_{y, z, \alpha}(x) = \mu_{n, -\alpha} \left (\frac{(x - y)}{|x - y|^{n + 1 - \alpha}} -  \frac{(x - z)}{|x - z|^{n + 1 - \alpha}} \right ),
\quad
x\in\R^n\setminus\set*{y,z}.
\end{equation}
A plain computation yields $F_{y, z, \alpha} \in L^p(\R^n ; \R^n)$ for all $p \in \left [1, \frac{n}{n - \alpha} \right)$ (for example, see the proof of~\cite{Comi-Stefani19}*{Prop.~3.14}).
Moreover, by~\cite{Comi-Stefani19}*{Prop.~3.13}, we know that 
\begin{equation*}
\divm^\alpha F_{y, z, \alpha} = \delta_y - \delta_z.
\end{equation*}
Consequently, $F_{y, z, \alpha} \in \DM^{\alpha, p}(\R^n)$ for all $p \in \left [1, \frac{n}{n - \alpha} \right )$. 
\end{example}

Interestingly, the vector field~\eqref{eq:esempio_frac} of \cref{exa:F_frac} works also in the limit case $\alpha = 1$, proving point~\eqref{item:classical_subcritical} of \cref{res:abs_div_classical}, see~\cite{Silhavy05}*{Prop.~6.1}.

\begin{example}
\label{exa:F_int}
Let $y,z\in\R^n$ and define
\begin{equation*}
F_{y, z, 1}(x) = \mu_{n, -1} \left (\frac{(x - y)}{|x - y|^{n}} -  \frac{(x - z)}{|x - z|^{n}} \right ),
\quad
x\in\R^n\setminus\set*{y,z}.
\end{equation*}
Computations as in \cref{exa:F_frac} show that $F_{y, z, 1} \in L^p(\R^n ; \R^n)$ for all $p \in \left ( 1, \frac{n}{n - 1} \right )$, with 
\begin{equation*}
\divm F_{y, z, 1} = \delta_y - \delta_z.
\end{equation*}
Hence $F_{y, z, 1} \in \DM^{1, p}(\R^n)$ for all $p \in \left ( 1, \frac{n}{n - 1} \right )$. 
Actually, we have $F_{y, z, 1} \in \DM^{1, 1}_{\rm loc}(\R^n)$.
\end{example}

\subsection{Partial sharpness of \texorpdfstring{\cref{res:abs_frac_div}}{Theorem 1.6}}
\label{subsec:sharp}

Arguing as in~\cite{Silhavy05}*{Exam.~3.3 and Prop.~6.1}, we can exploit the properties of the vector field~\eqref{eq:esempio_frac} in \cref{exa:F_frac} to construct additional examples proving a partial sharpness of \cref{res:abs_frac_div}.

The following result is the analogue of~\cite{Comi-et-al21}*{Prop.~5}.

\begin{proposition}[The vector field $G_\alpha=F_\alpha*\nu$]
\label{res:F_alpha}
Let $\alpha\in(0,1)$ and $F_\alpha = F_{0, {\rm e}_1,\alpha}$ be as in \cref{exa:F_frac}, and let $\nu\in\M(\R^n)$.
Then we have 
\begin{equation*}
G_\alpha=F_\alpha*\nu \in \DM^{\alpha,p}(\R^n) 
\quad
\text{for all}\ p \in \left [1, \frac{n}{n - \alpha} \right ),	
\end{equation*}
with 
\begin{equation} \label{eq:fract_der_nu}
\divm^{\alpha}\, G_{\alpha} = \nu - (\tau_{{\rm e}_1})_{\#} \nu,
\end{equation}
where 
$\tau_{x}(y) = y + x$ for all $x,y\in\R^n$. In addition, if there exist $C, \eps > 0$ such that
\begin{equation}
\label{eq:mu_radius}
|\nu|(B_r(x))
\le
Cr^\eps
\quad\text{for all $x\in\R^n$ and $r>0$},
\end{equation}
then 
\begin{equation}
\label{eq:u_alpha_integrability}
G_\alpha \in \DM^{\alpha,p}(\R^n)
\quad
\text{for all}\
p\in
\begin{cases}
\left [1, \frac{n - \eps}{n - \alpha - \eps}\right )
& 
\text{if}\ \eps \in (0, n - \alpha),
\\[1.5ex]
[1,+\infty)
& 
\text{if}\ \eps = n - \alpha,
\\[1.5ex]
[1,+\infty]
& 
\text{if}\ \eps \in (n - \alpha,n].\\[2mm]
\end{cases}
\end{equation}
\end{proposition}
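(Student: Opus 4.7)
The plan is to treat the three claims in turn, the first two being almost bookkeeping. For the $L^p$ inclusion, Example~\ref{exa:F_frac} gives $F_\alpha \in L^p(\R^n;\R^n)$ for every $p \in [1, n/(n-\alpha))$; Young's convolution inequality for a function against a finite Radon measure then yields $\|G_\alpha\|_{L^p} \le \|F_\alpha\|_{L^p}\,|\nu|(\R^n)$. For the divergence identity, I would test against an arbitrary $\xi \in C^\infty_c(\R^n)$: since $F_\alpha \in L^1_{\loc}$, $\nabla^\alpha \xi \in L^1 \cap L^\infty$ and $|\nu|$ is finite, Fubini applies and, using the translation invariance of $\nabla^\alpha$ and the identity $\divm^\alpha F_\alpha = \delta_0 - \delta_{e_1}$ from Example~\ref{exa:F_frac},
\[
\int_{\R^n} G_\alpha \cdot \nabla^\alpha \xi \di x = \int_{\R^n}\!\!\int_{\R^n} F_\alpha(z)\cdot \nabla^\alpha(\xi(\cdot + y))(z) \di z\, d\nu(y) = -\int_{\R^n}\bigl(\xi(y) - \xi(y + e_1)\bigr)d\nu(y),
\]
which is exactly the pairing of $-\xi$ with $\nu - (\tau_{e_1})_\# \nu$. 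Hence $\divm^\alpha G_\alpha = \nu - (\tau_{e_1})_\#\nu$, a finite Radon measure with $|\divm^\alpha G_\alpha|(\R^n) \le 2|\nu|(\R^n)$, and therefore $G_\alpha \in \DM^{\alpha,p}(\R^n)$ in the claimed range.

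For the improved integrability under the Morrey-type bound~\eqref{eq:mu_radius}, the plan is to combine the Young range above with a pointwise estimate. Starting from the inequality $|F_\alpha(z)| \le \mu_{n,-\alpha}(|z|^{\alpha - n} + |z - e_1|^{\alpha - n})$, one obtains
\[
|G_\alpha(x)| \le \mu_{n,-\alpha}\bigl(V(x) + V(x - e_1)\bigr),\qquad V(x) := \int_{\R^n}|x - y|^{\alpha - n}\,d|\nu|(y).
\]
A dyadic decomposition of $V$ over annuli $\{y : 2^k \le |x - y| < 2^{k+1}\}$, together with the Morrey-type bound $|\nu|(B_{2^{k+1}}(x)) \le C\min(2^{(k+1)\eps},\,|\nu|(\R^n))$ from~\eqref{eq:mu_radius}, yields: uniform boundedness of $V$ when $\eps > n - \alpha$ (the series at $k \to -\infty$ converges geometrically), so $G_\alpha \in L^\infty$; $V \in L^p_{\loc}$ for all $p < \infty$ when $\eps = n - \alpha$ (only logarithmic divergence of the dyadic sum); and $V \in L^p_{\loc}$ for $p < (n - \eps)/(n - \alpha - \eps)$ when $\eps \in (0, n - \alpha)$, via a standard Morrey-Riesz (Hedberg-/Adams-type) estimate. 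Decay of $V$ at infinity is $\lesssim |x|^{\alpha - n}|\nu|(\R^n)$, which is $L^p$ at infinity as soon as $p > n/(n-\alpha)$; for $p < n/(n-\alpha)$ the already-established Young bound controls $G_\alpha$ globally. Matching these contributions in each regime produces the three ranges announced in~\eqref{eq:u_alpha_integrability}.

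The hard part will be the subcritical regime $\eps \in (0, n - \alpha)$, where obtaining the sharp threshold $p < (n - \eps)/(n - \alpha - \eps)$ requires careful Morrey-Riesz analysis rather than naive scaling; the exponent is the one produced by the local dimensional count $\int |V|^p \sim \int_{\supp|\nu|} d|\nu|(y)\int_0^1 r^{p(\eps - n + \alpha) + (n - \eps) - 1}dr$, convergent iff $p(n-\alpha-\eps) < n - \eps$. The two remaining regimes $\eps \ge n - \alpha$ are much softer, reducing to (near-)uniform convergence of the dyadic series. The overall strategy is the natural transcription to $\DM^{\alpha,p}$ of the argument in~\cite{Comi-et-al21}*{Prop.~5} for $BV^{\alpha,p}$ scalar functions.
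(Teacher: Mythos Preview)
Your treatment of the first two claims (the $L^p$ inclusion via Young's inequality and the identity $\divm^\alpha G_\alpha = \nu - (\tau_{e_1})_\#\nu$ via Fubini and translation invariance) matches the paper's Step~1 essentially verbatim.

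For the improved integrability under~\eqref{eq:mu_radius}, your route differs from the paper's. You pass to the pointwise majorant $|G_\alpha(x)| \le \mu_{n,-\alpha}\bigl(V(x) + V(x-e_1)\bigr)$ with $V(x) = \int|x-y|^{\alpha-n}\,d|\nu|(y)$, and then analyze $V$ via dyadic annuli plus a far-field/near-field split. The paper instead keeps $F_\alpha$ intact and uses a single H\"older interpolation on the convolution: writing $|F_\alpha| = |F_\alpha|^{\delta/q}\,|F_\alpha|^{1-\delta/q}$ and applying H\"older gives
\[
|G_\alpha(x)|^p \le \Bigl(\int |F_\alpha(x-y)|^\delta\,d|\nu|(y)\Bigr)^{p-1}\int |F_\alpha(x-y)|^{p-\delta(p-1)}\,d|\nu|(y).
\]
The first factor is bounded uniformly in $x$ (Morrey on the dyadic annuli near each singularity, plain boundedness of $|F_\alpha|$ elsewhere) as soon as $\delta(n-\alpha) < \eps$; integrating the second factor in $x$ via Tonelli reduces to $|\nu|(\R^n)\,\|F_\alpha\|_{L^{p-\delta(p-1)}}^{p-\delta(p-1)}$, finite iff $p-\delta(p-1) < n/(n-\alpha)$. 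Optimizing in $\delta$ yields all three regimes of~\eqref{eq:u_alpha_integrability} at once, with the cancellation in $F_\alpha$ (which is what puts $F_\alpha\in L^1$) never discarded.

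Your approach can be pushed through, but two points need repair. First, the claim ``$V(x)\lesssim |x|^{\alpha-n}|\nu|(\R^n)$ at infinity'' is false when $\nu$ has noncompact support; the correct statement is that the far piece $V_2(x)=\int_{|x-y|\ge 1}|x-y|^{\alpha-n}\,d|\nu|(y)$ lies in $L^p$ for every $p>n/(n-\alpha)$, which follows from Minkowski's integral inequality (not from pointwise decay). Second, and more seriously, your ``standard Morrey--Riesz (Hedberg/Adams-type) estimate'' giving the sharp local threshold $p<(n-\eps)/(n-\alpha-\eps)$ for the near piece is not a consequence of the dyadic bound you wrote (that series only gives a pointwise estimate when $\eps>n-\alpha$, and your logarithmic-divergence remark for $\eps=n-\alpha$ does not by itself yield $L^p_{\loc}$ for all finite $p$). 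To extract the sharp exponent rigorously you would have to run precisely the H\"older splitting above on the kernel $|z|^{\alpha-n}\mathbf 1_{|z|<1}$; at that point you have reproduced the paper's argument with an extra detour through~$V$. The heuristic dimensional count you give is correct intuition for $\nu=\Haus{\eps}\res K$, but is not a proof for a general measure satisfying only~\eqref{eq:mu_radius}.
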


\begin{proof}
We divide the proof into two steps.

\smallskip

\textit{Step~1}.
Let $\nu\in\mathscr M(\R^n)$. 
We claim that $G_{\alpha}\in \DM^{\alpha, p}(\R^n)$ for all 
$p \in \left [1, \frac{n}{n - \alpha} \right )$ and that $G_{\alpha}$ satisfies \eqref{eq:fract_der_nu}.
Indeed, by Young's inequality (for Radon measures), we can estimate
\begin{equation*}
\|G_{\alpha}\|_{L^{1}(\R^n; \R^n)} 
\le 
\|F_{\alpha}\|_{L^{1}(\R^n; \R^n)} |\nu|(\R^n).
\end{equation*}
Moreover, thanks to the translation invariance of $\nabla^{\alpha}$ and exploiting the explicit expression of $F_\alpha$ given in \cref{exa:F_frac}, we can write 
\begin{align*}
\int_{\R^n} G_{\alpha}(x) \cdot \nabla^{\alpha} \xi(x) \di x 
& = 
\int_{\R^n} \int_{\R^n} F_{\alpha}(x - y) \cdot \nabla^{\alpha} \xi (x) \di  \nu(y) \di x
\\ 
& = 
\int_{\R^n} \int_{\R^n} F_{\alpha}(x - y) \cdot \nabla^{\alpha} \xi (x) \di x \di  \nu(y) 
\\
& = 
- \int_{\R^n} \int_{\R^n} \xi(x + y) \di  \left ( \delta_{0}(x) - \delta_{{\rm e}_1}(x) \right ) \di  \nu(y) \\
& = 
- \int_{\R^n} \left ( \xi(y) - \xi(y + {\rm e}_1) \right ) \di  \nu(y)
\end{align*}
for all $\xi \in C^\infty_c(\R^n)$.
This proves $G_{\alpha} \in \DM^{\alpha, 1}(\R^n)$ and~\eqref{eq:fract_der_nu}.
In addition, by Jensen's inequality and Tonelli's Theorem, we can estimate
\begin{align*}
\int_{\R^n} |G_{\alpha}(x)|^{p} \di x 
& \le \int_{\R^n}
|\nu|(\R^n)^{p - 1}
\int_{\R^n} |F_{\alpha}(x - y)|^{p} \di  |\nu|(y) \di x 
\\
&= |\nu|(\R^n)^{p} \,\|F_{\alpha}\|_{L^{p}(\R^n; \R^n)}^p 
< + \infty
\end{align*}
for all $p\in\left[1,\frac{n}{n-\alpha}\right)$, thanks to the integrability properties of $F_\alpha$ given in \cref{exa:F_frac}.

\smallskip

\textit{Step~2}.
We prove that~\eqref{eq:mu_radius} implies~\eqref{eq:u_alpha_integrability}.
To this aim, let $q=\frac p{p-1}$ and $0 < \delta \le q$.
Since 
$|F_{\alpha}| = |F_{\alpha}|^{\frac{\delta}{q}} |F_{\alpha}|^{1 - \frac{\delta}{q}}$,
by H\"older's inequality we get
\begin{align*}
|G_{\alpha}(x)|^{p} 
& \le 
\left ( \int_{\R^n} |F_{\alpha}(x - y)|^{\frac{\delta}{q}}\, |F_{\alpha}(x - y)|^{1 - \frac{\delta}{q}} \di  |\nu|(y) \right )^{p} 
\\
& \le 
\left ( \int_{\R^n} |F_{\alpha}(x - y)|^{\delta} \di  |\nu|(y) \right )^{\frac{p}{q}} 
\left ( \int_{\R^n} |F_{\alpha}(x - y)|^{p \left (1 - \frac{\delta}{q} \right )} \di  |\nu|(y) \right )
\end{align*}
for a.e.\ $x\in\R^n$.
We now recall the explicit expression of $F_\alpha$ in \cref{exa:F_frac} and write
\begin{equation}
\label{eq:u_alpha_splitting}
\begin{split}
\int_{\R^n} |F_{\alpha}(x - y)|^{\delta} \di  |\nu|(y) 
& = 
\int_{\R^n \setminus \left (B_{\frac{1}{2}}(x) \cup B_{\frac{1}{2}}(x - {\rm e}_1) \right )} |F_{\alpha}(x - y)|^{\delta} \di  |\nu|(y) 
\\
& \quad + \sum_{j = 1}^{\infty} \int_{C_{j}\left (x, \frac{1}{2} \right ) \cup C_{j}\left (x - {\rm e}_1, \frac{1}{2} \right )} |F_{\alpha}(x - y)|^{\delta} \di  |\nu|(y),
\end{split}
\end{equation}
where we have set
\begin{equation*}
C_{j}(x, r) = B_{r^j}(x) \setminus B_{r^{j+1}}(x) 
\end{equation*}
for all $x\in\R^n$, $r\in(0,1)$ and $j\in\N$, $j \ge 1$, for brevity.
Now, on the one hand, if 
$y \in \R^n \setminus \left (B_{\frac{1}{2}}(x) \cup B_{\frac{1}{2}}(x - {\rm e}_1) \right )$, 
then $x - y \in \R^n \setminus \left (B_{\frac{1}{2}} \cup B_{\frac{1}{2}}({\rm e}_1) \right )$, so that
\begin{equation*}
|F_{\alpha}(x - y)| \le \mu_{n, - \alpha} \left ( 2^{n - \alpha} + 2^{n - \alpha} \right ) = \mu_{n, - \alpha}\, 2^{n+1 - \alpha}
\end{equation*}
for all 
$y \in \R^n \setminus \left (B_{\frac{1}{2}}(x) \cup B_{\frac{1}{2}}(x - {\rm e}_1) \right )$.
Therefore, we can estimate
\begin{equation}
\label{eq:u_alpha_split_est1}
\int_{\R^n \setminus \left (B_{\frac{1}{2}}(x) \cup B_{\frac{1}{2}}(x - {\rm e}_1) \right )} |F_{\alpha}(x - y)|^{\delta} \di  |\nu|(y) 
\le 
\left (\mu_{n, - \alpha}\, 2^{n+1 - \alpha} \right )^{\delta} |\nu|(\R^n)
\end{equation}
for all $x\in\R^n$.
On the other hand, 
for all $x \in \R^n$ and $j \ge 1$, 
we have
\begin{align}
\int_{C_{j}\left (x, \frac{1}{2} \right)} |F_{\alpha}(x - y)|^{\delta} \di  |\nu|(y) 
& \le 
\mu_{n, - \alpha}^{\delta} \int_{C_{j}\left (x, \frac{1}{2} \right)} \left ( |x - y|^{\alpha - n} + |x - y - {\rm e}_1|^{\alpha - n} \right )^{\delta} \di  |\nu|(y) 
\nonumber\\
& \le \mu_{n, - \alpha}^{\delta} \left ( 2^{(j + 1)(n - \alpha)} + \left ( 1 - 2^{-j} \right )^{\alpha - n} \right )^{\delta} |\nu|( B_{2^{-j}}(x) )  
\nonumber\\
& \le \label{eq:u_alpha_split_est2} 
\mu_{n, - \alpha}^{\delta} \left ( 2^{(j + 1)(n - \alpha)} + 2^{n - \alpha} \right )^{\delta} C \, 2^{- j \eps}.
\end{align}
Reasoning analogously, we obtain
\begin{equation}
\label{eq:u_alpha_split_est3}
\int_{C_{j}\left (x- {\rm e}_1, \frac{1}{2} \right)} |F_{\alpha}(x - y)|^{\delta} \di  |\nu|(y) \le C \mu_{n, - \alpha}^{\delta} \left ( 2^{(j + 1)(n - \alpha)} + 2^{n - \alpha} \right )^{\delta} 2^{- j \eps}
\end{equation}
for all $x \in \R^n$ and $j \ge 1$.
Therefore, inserting~\eqref{eq:u_alpha_split_est1}, \eqref{eq:u_alpha_split_est2} and~\eqref{eq:u_alpha_split_est3} in~\eqref{eq:u_alpha_splitting}, we get that 
\begin{equation} 
\label{eq:f_alpha_delta_bound}
\int_{\R^n} |F_{\alpha}(x - y)|^{\delta} \di  |\nu|(y) 
\le 
C_{\alpha, \eps, \delta}
\end{equation}
for all $x\in\R^n$, where $C_{\alpha, \eps, \delta}>0$ is constant depending on~$\alpha$, $\eps$, and~$\delta$ which is finite provided that we choose $\delta <\frac\eps{n-\alpha}$, as we are assuming from now on.
We thus have
\begin{align*}
\int_{\R^n} |G_{\alpha}(x)|^{p} \di x 
& \le 
C_{\alpha, \eps, \delta}^{p - 1}\, 
\int_{\R^n} \int_{\R^n}
|F_{\alpha}(x - y)|^{p \left (1 - \frac{\delta}{q} \right )} \di  |\nu|(y) \di x 
\\
& = 
C_{\alpha, \eps, \delta}^{p - 1} 
\,
|\nu|(\R^n) 
\int_{\R^n} |F_{\alpha}(x)|^{p \left (1 - \frac{\delta}{q} \right )} \di x.
\end{align*}
Now, recalling \cref{exa:F_frac}, we immediately see that
\begin{equation*}
\int_{\R^n} |F_{\alpha}(x)|^{p \left (1 - \frac{\delta}{q} \right )} \di x < +\infty
\iff
p < \frac{n}{(n - \alpha) (1 - \delta)} - \frac{\delta}{1 - \delta} = \frac{n - \delta n + \alpha \delta}{(n - \alpha) (1 - \delta)}.
\end{equation*}
Hence, since the function
$
\delta\mapsto\frac{n - \delta n + \alpha \delta}{(n - \alpha) (1 - \delta)}
$
is monotone increasing, we easily see that
\begin{equation*}
\eps \in (0, n - \alpha)
\implies
\delta < \frac{\eps}{n - \alpha} < 1
\implies
p \in \left [1, \frac{n - \eps}{n - \alpha - \eps}\right)
\end{equation*}
and, similarly,
\begin{equation*}
\eps \in [n - \alpha, n]
\implies
\delta (n - \alpha) < \eps\
\text{for all}\ 
\delta\in(0,1)
\implies
p \in [1, +\infty).
\end{equation*}
Finally, in the case $\eps \in (n - \alpha, n]$, we exploit \eqref{eq:f_alpha_delta_bound} for $\delta = 1$ in order to conclude that
\begin{equation*}
|G_{\alpha}(x)| 
\le 
\int_{\R^n} |F_{\alpha}(x - y)| \di  |\nu|(y) 
= 
C_{\alpha, \eps} 
< 
+\infty
\end{equation*}
for all $x\in\R^n$, which implies that $G_{\alpha} \in L^{\infty}(\R^n)$.
The conclusion thus follows.
\end{proof}

Thanks to \cref{res:F_alpha}, we can now give the following example.

\begin{example}
\label{exa:falconer}
Let $\alpha\in(0,1)$ and let $\nu$ and $G_\alpha$ be as in \cref{res:F_alpha}.
By~\cite{Falconer14}*{Cor.~4.12}, there exists a compact set $K\subset\R$ such that $\nu = \Haus{\eps} \res K$, so that $|\divm^{\alpha} G_{\alpha}|\not\ll\Haus{s}$ for all $s > \eps$. 
Now we observe that, by~\eqref{eq:u_alpha_integrability}, we have the following situations:
\begin{itemize}
\item in order to have $G_\alpha \in \DM^{\alpha, p}(\R^n)$ for some $p \in \left [ \frac{n}{n-\alpha}, + \infty \right )$, we need $\eps > n - \alpha q$, since, if $\eps \in [n - \alpha, n]$, then $p \in [1, + \infty)$, while, for $\eps \in (0, n - \alpha)$, we have $p < \frac{n - \eps}{n - \alpha - \eps}$, which implies $\eps > n - \alpha q$;

\item in order to have $G_\alpha \in \DM^{\alpha, \infty}(\R^n)$, we need $\eps > n - \alpha$, since, if $\eps \in (n - \alpha, n]$, then $p \in [1, + \infty]$.
\end{itemize}
Therefore, these lower bounds on $\eps$ imply that, for $p \in \left [ \frac{n}{n-\alpha}, + \infty \right ]$, we have 
\begin{equation*}
|\divm^{\alpha} G_{\alpha}|\not\ll\Haus{s} \text{ for all } s > n - \alpha q. 
\end{equation*}
\end{example}

Notice that
\begin{equation*}
n - \alpha q 
\ge 
\max
\set*{
n -  \frac{nq}{nq + (1-\alpha)q} 
,\
\frac{n}{q} - \alpha
}
\end{equation*}
for all $q \in \left [1, \frac{n}{\alpha} \right ]$, which means $p \in \left [ \frac{n}{n-\alpha}, + \infty \right ]$, with equality only for $q = \frac{n}{\alpha}$ and $q = 1$.
Consequently, \cref{exa:falconer} shows that points~\eqref{item:abs_frac_div_intermediate} and~\eqref{item:abs_frac_div_supercritical} in \cref{res:abs_frac_div} cannot be improved beyond $|\divm^\alpha F| \ll \Haus{n - \alpha q}$, which is actually sharp for $p = + \infty$. 

\begin{remark}[Correction to~\cite{Comi-et-al21}*{Exam.~2}]
For $n=1$, \cref{exa:falconer} together with the above considerations corrects the conclusions of \cite{Comi-et-al21}*{Exam.~2}.
\end{remark}


\begin{bibdiv}
\begin{biblist}

\bib{Ambrosio-Crippa-Maniglia05}{article}{
      author={Ambrosio, Luigi},
      author={Crippa, Gianluca},
      author={Maniglia, Stefania},
       title={Traces and fine properties of a {$BD$} class of vector fields and
  applications},
        date={2005},
        ISSN={0240-2963},
     journal={Ann. Fac. Sci. Toulouse Math. (6)},
      volume={14},
      number={4},
       pages={527--561},
}

\bib{AFP00}{book}{
   author={Ambrosio, Luigi},
   author={Fusco, Nicola},
   author={Pallara, Diego},
   title={Functions of bounded variation and free discontinuity problems},
   series={Oxford Mathematical Monographs},
   publisher={The Clarendon Press, Oxford University Press, New York},
   date={2000},
}

\bib{Anzellotti84}{article}{
   author={Anzellotti, Gabriele},
   title={Pairings between measures and bounded functions and compensated compactness},
   journal={Ann. Mat. Pura Appl. (4)},
   volume={135},
   date={1983},
   pages={293--318 (1984)},
}

\bib{Brue-et-al20}{article}{
author={Bru{\`e}, Elia},
   author={Calzi, Mattia},
   author={Comi, Giovanni E.},
   author={Stefani, Giorgio},
   title={A distributional approach to fractional Sobolev spaces and
   fractional variation: asymptotics II},
   journal={C. R. Math. Acad. Sci. Paris},
   volume={360},
   date={2022},
   pages={589--626},
}

\bib{brue2022constancy}{article}{
  title={Constancy of the dimension in codimension one and locality of the unit normal on ${\rm RCD}(K, N)$ spaces},
  author={Bru{\'e}, Elia},
  author={Pasqualetto, Enrico}, 
  author={Semola, Daniele},
  journal={Ann. Sc. Norm. Super. Pisa Cl. Sci.},
  year={2022}
}

\bib{BuffaComiMira}{article}{
   author={Buffa, Vito},
   author={Comi, Giovanni E.},
   author={Miranda, Michele Jr.},
   title={On BV functions and essentially bounded divergence-measure fields
   in metric spaces},
   journal={Rev. Mat. Iberoam.},
   volume={38},
   date={2022},
   number={3},
   pages={883--946},
}

\bib{Chen-et-al19}{article}{
   author={Chen, Gui-Qiang G.},
   author={Comi, Giovanni E.},
   author={Torres, Monica},
   title={Cauchy fluxes and Gauss-Green formulas for divergence-measure fields over general open sets},
   journal={Arch. Ration. Mech. Anal.},
   volume={233},
   date={2019},
   number={1},
   pages={87--166},
}

\bib{Chen-Frid99}{article}{
   author={Chen, Gui-Qiang},
   author={Frid, Hermano},
   title={Divergence-measure fields and hyperbolic conservation laws},
   journal={Arch. Ration. Mech. Anal.},
   volume={147},
   date={1999},
   number={2},
   pages={89--118},
}

\bib{Chen-Frid01}{article}{
      author={Chen, Gui-Qiang},
      author={Frid, Hermano},
       title={On the theory of divergence-measure fields and its applications},
        date={2001},
     journal={Bol. Soc. Bras. Mat.},
      volume={32},
      number={3},
       pages={401--433},
}

\bib{Chen-Frid03}{article}{
      author={Chen, Gui-Qiang},
      author={Frid, Hermano},
       title={Extended divergence-measure fields and the {E}uler equations for
  gas dynamics},
        date={2003},
        ISSN={0010-3616},
     journal={Comm. Math. Phys.},
      volume={236},
      number={2},
       pages={251--280},
}

\bib{ChTo}{article}{
      author={Chen, Gui-Qiang},
      author={Torres, Monica},
       title={On the structure of solutions of nonlinear hyperbolic systems of
  conservation laws},
        date={2011},
     journal={Commun. Pure Appl. Anal.},
      volume={10},
      number={4},
       pages={1011\ndash 1036},
}

\bib{Cicalese-Trombetti03}{article}{
      author={Cicalese, Marco},
      author={Trombetti, Cristina},
       title={Asymptotic behaviour of solutions to {$p$}-{L}aplacian equation},
        date={2003},
        ISSN={0921-7134},
     journal={Asymptot. Anal.},
      volume={35},
      number={1},
       pages={27--40},
}

\bib{ComiPhD}{article}{
   author={Comi, Giovanni E.},
   title={Refined Gauss--Green formulas and evolution problems for Radon measures},
   journal={Scuola Normale Superiore, Pisa},
   date={2020},
 note={Ph.D.\ Thesis. Available at \href{https://cvgmt.sns.it/paper/4579/}{cvgmt.sns.it/paper/4579/}}
}

\bib{comi2022representation}{article}{
  author={Comi, Giovanni E.} 
  author={Crasta, Graziano},
  author={De Cicco, Virginia},
  author={Malusa, Annalisa},
  title={Representation formulas for pairings between divergence-measure fields and $BV$ functions},
   note={Preprint, available at \href{https://arxiv.org/abs/2208.10812}{arXiv:2208.10812}},
  year={2022},
}

\bib{Comi-Payne20}{article}{
   author={Comi, Giovanni E.},
   author={Payne, Kevin R.},
   title={On locally essentially bounded divergence measure fields and sets
   of locally finite perimeter},
   journal={Adv. Calc. Var.},
   volume={13},
   date={2020},
   number={2},
   pages={179--217},
}

\bib{ComiMagna}{article}{
   author={Comi, Giovanni E.},
   author={Magnani, Valentino},
   title={The Gauss-Green theorem in stratified groups},
   journal={Adv. Math.},
   volume={360},
   date={2020},
   pages={106916, 85},
}

\bib{Comi-et-al21}{article}{
  author={Comi, Giovanni E.},
   author={Spector, Daniel},
   author={Stefani, Giorgio},
   title={The fractional variation and the precise representative of
   $BV^{\alpha,p}$ functions},
   journal={Fract. Calc. Appl. Anal.},
   volume={25},
   date={2022},
   number={2},
   pages={520--558},
}

\bib{Comi-Stefani19}{article}{
   author={Comi, Giovanni E.},
   author={Stefani, Giorgio},
   title={A distributional approach to fractional Sobolev spaces and
   fractional variation: Existence of blow-up},
   journal={J. Funct. Anal.},
   volume={277},
   date={2019},
   number={10},
   pages={3373--3435},
}

\bib{Comi-Stefani22-A}{article}{
   author={Comi, Giovanni E.},
   author={Stefani, Giorgio},
   title={A distributional approach to fractional Sobolev spaces and fractional variation: asymptotics I},
 journal={Rev. Mat. Complut.},
   date={2022},
}

\bib{Comi-Stefani22-L}{article}{
  author={Comi, Giovanni E.},
  author={Stefani, Giorgio},
   title={Leibniz rules and Gauss-Green formulas in distributional
   fractional spaces},
   journal={J. Math. Anal. Appl.},
   volume={514},
   date={2022},
   number={2},
   pages={Paper No. 126312, 41},
}

\bib{Comi-Stefani23}{article}{
   author={Comi, Giovanni E.},
   author={Stefani, Giorgio},
    title={Failure of the local chain rule for the fractional variation},
 journal={Port. Math.},
  volume={},
    date={2023},
   number={},
   pages={}, 
}

\bib{Comi-Stefani23-On}{article}{
   author={Comi, Giovanni E.},
   author={Stefani, Giorgio},
    title={On sets with finite distributional fractional perimeter},
    date={2023},
note={Preprint, available at \href{https://arxiv.org/abs/2303.10989}{arXiv:2303.10989}},
}

\bib{Crasta-DeCicco19-Anzellotti}{article}{
   author={Crasta, Graziano},
   author={De Cicco, Virginia},
   title={Anzellotti's pairing theory and the Gauss-Green theorem},
   journal={Adv. Math.},
   volume={343},
   date={2019},
   pages={935--970},
}

\bib{Crasta-DeCicco19-An}{article}{
      author={Crasta, Graziano},
      author={De Cicco, Virginia},
       title={An extension of the pairing theory between divergence-measure
  fields and {BV} functions},
        date={2019},
        ISSN={0022-1236},
     journal={J. Funct. Anal.},
      volume={276},
      number={8},
       pages={2605--2635},
         url={https://doi.org/10.1016/j.jfa.2018.06.007},
      review={\MR{3926127}},
}

\bib{CD5}{article}{
author={Crasta, Graziano},
      author={De Cicco, Virginia},
       title={On the variational nature of the Anzellotti pairing},
       note={Preprint, available at \href{https://arxiv.org/abs/2207.06469}{arXiv:2207.06469}},
  year={2022},
}

\bib{Crasta-et-al22}{article}{
   author={Crasta, Graziano},
   author={De Cicco, Virginia},
   author={Malusa, Annalisa},
   title={Pairings between bounded divergence-measure vector fields and BV
   functions},
   journal={Adv. Calc. Var.},
   volume={15},
   date={2022},
   number={4},
   pages={787--810},
}

\bib{MR3978950}{article}{
   author={De Cicco, Virginia},
   author={Giachetti, Daniela},
   author={Oliva, Francescantonio},
   author={Petitta, Francesco},
   title={The Dirichlet problem for singular elliptic equations with general
   nonlinearities},
   journal={Calc. Var. Partial Differential Equations},
   volume={58},
   date={2019},
   number={4},
   pages={Paper No. 129, 40},
}

\bib{MR3939259}{article}{
   author={De Cicco, Virginia},
   author={Giachetti, Daniela},
   author={Segura de Le\'{o}n, Sergio},
   title={Elliptic problems involving the 1-Laplacian and a singular lower
   order term},
   journal={J. Lond. Math. Soc. (2)},
   volume={99},
   date={2019},
   number={2},
   pages={349--376},
}

\bib{DeLellis-Otto-Westdickenberg03}{article}{
      author={De~Lellis, Camillo},
      author={Otto, Felix},
      author={Westdickenberg, Michael},
       title={Structure of entropy solutions for multi-dimensional scalar
  conservation laws},
        date={2003},
        ISSN={0003-9527},
     journal={Arch. Ration. Mech. Anal.},
      volume={170},
      number={2},
       pages={137--184},
}

\bib{Degiovanni-et-al99}{article}{
      author={Degiovanni, Marco},
      author={Marzocchi, Alfredo},
      author={Musesti, Alessandro},
       title={Cauchy fluxes associated with tensor fields having divergence
  measure},
        date={1999},
        ISSN={0003-9527},
     journal={Arch. Ration. Mech. Anal.},
      volume={147},
      number={3},
       pages={197--223},
}

\bib{DiNezza-et-al12}{article}{
   author={Di Nezza, Eleonora},
   author={Palatucci, Giampiero},
   author={Valdinoci, Enrico},
   title={Hitchhiker's guide to the fractional Sobolev spaces},
   journal={Bull. Sci. Math.},
   volume={136},
   date={2012},
   number={5},
   pages={521--573},
}

\bib{Falconer14}{book}{
   author={Falconer, Kenneth},
   title={Fractal geometry},
   edition={3},
   note={Mathematical foundations and applications},
   publisher={John Wiley \& Sons, Ltd., Chichester},
   date={2014},
}

\bib{Frid12}{article}{
      author={Frid, Hermano},
       title={Remarks on the theory of the divergence-measure fields},
        date={2012},
        ISSN={0033-569X},
     journal={Quart. Appl. Math.},
      volume={70},
      number={3},
       pages={579--596},
}

\bib{Frid14}{article}{
   author={Frid, Hermano},
   title={Divergence-measure fields on domains with Lipschitz boundary},
   conference={
      title={Hyperbolic conservation laws and related analysis with
      applications},
   },
   book={
      series={Springer Proc. Math. Stat.},
      volume={49},
      publisher={Springer, Heidelberg},
   },
   date={2014},
   pages={207--225},
}

\bib{Grafakos14-C}{book}{
   author={Grafakos, Loukas},
   title={Classical Fourier analysis},
   series={Graduate Texts in Mathematics},
   volume={249},
   edition={3},
   publisher={Springer, New York},
   date={2014},
}

\bib{Grafakos14-M}{book}{
   author={Grafakos, Loukas},
   title={Modern Fourier analysis},
   series={Graduate Texts in Mathematics},
   volume={250},
   edition={3},
   publisher={Springer, New York},
   date={2014},
}

\bib{Kawhol-Schuricht07}{article}{
      author={Kawohl, Bernd},
      author={Schuricht, Friedemann},
       title={Dirichlet problems for the 1-{L}aplace operator, including the
  eigenvalue problem},
        date={2007},
        ISSN={0219-1997},
     journal={Commun. Contemp. Math.},
      volume={9},
      number={4},
       pages={515--543},
}

\bib{MR4241342}{article}{
   author={Latorre, Marta},
   author={Oliva, Francescantonio},
   author={Petitta, Francesco},
   author={Segura de Le\'{o}n, Sergio},
   title={The Dirichlet problem for the 1-Laplacian with a general singular
   term and $L^1$-data},
   journal={Nonlinearity},
   volume={34},
   date={2021},
   number={3},
   pages={1791--1816},
}

\bib{leonardi2023prescribed}{article}{
  title={The prescribed mean curvature measure equation in non-parametric form},
  author={Leonardi, Gian Paolo}
  author={Comi, Giovanni E.},
   note={Preprint, available at \href{https://arxiv.org/abs/2302.10592}{arXiv:2302.10592}},
  year={2023},
}

\bib{Leonardi-Saracco22}{article}{
   author={Leonardi, Gian Paolo},
   author={Saracco, Giorgio},
   title={Rigidity and trace properties of divergence-measure vector fields},
   journal={Adv. Calc. Var.},
   volume={15},
   date={2022},
   number={1},
   pages={133--149},
}

\bib{Leoni17}{book}{
   author={Leoni, Giovanni},
   title={A first course in Sobolev spaces},
   series={Graduate Studies in Mathematics},
   volume={181},
   edition={2},
   publisher={American Mathematical Society, Providence, RI},
   date={2017},
}

\bib{Liu-Xiao22}{article}{
      author={Liu, Liguang},
      author={Xiao, Jie},
       title={Divergence \& curl with fractional order},
        date={2022},
        ISSN={0021-7824},
     journal={J. Math. Pures Appl. (9)},
      volume={165},
       pages={190--231},
}

\bib{MR2502520}{article}{
   author={Mercaldo, Anna},
   author={Segura de Le\'{o}n, Sergio},
   author={Trombetti, Cristina},
   title={On the solutions to 1-Laplacian equation with $L^1$ data},
   journal={J. Funct. Anal.},
   volume={256},
   date={2009},
   number={8},
   pages={2387--2416},
}

\bib{Phuc-Torres08}{article}{
   author={Phuc, Nguyen Cong},
   author={Torres, Monica},
   title={Characterizations of the existence and removable singularities of
   divergence-measure vector fields},
   journal={Indiana Univ. Math. J.},
   volume={57},
   date={2008},
   number={4},
   pages={1573--1597},
}

\bib{Ponce-Spector20}{article}{
   author={Ponce, Augusto C.},
   author={Spector, Daniel},
   title={A boxing inequality for the fractional perimeter},
   journal={Ann. Sc. Norm. Super. Pisa Cl. Sci. (5)},
   volume={20},
   date={2020},
   number={1},
   pages={107--141},
}

\bib{Scheven-Schmidt16}{article}{
   author={Scheven, Christoph},
   author={Schmidt, Thomas},
   title={BV supersolutions to equations of 1-Laplace and minimal surface
   type},
   journal={J. Differential Equations},
   volume={261},
   date={2016},
   number={3},
   pages={1904--1932},
}

\bib{MR3813962}{article}{
   author={Scheven, Christoph},
   author={Schmidt, Thomas},
   title={On the dual formulation of obstacle problems for the total
   variation and the area functional},
   journal={Ann. Inst. H. Poincar\'{e} C Anal. Non Lin\'{e}aire},
   volume={35},
   date={2018},
   number={5},
   pages={1175--1207},
}

\bib{Schu}{article}{
      author={Schuricht, Friedemann},
       title={A new mathematical foundation for contact interactions in
  continuum physics},
        date={2007},
     journal={Arch. Ration. Mech. Anal.},
      volume={184},
      number={3},
       pages={495\ndash 551},
}



\bib{Silhavy05}{article}{
   author={{\v{S}}ilhav{\'{y}}, Miroslav},
   title={Divergence measure fields and Cauchy's stress theorem},
   journal={Rend. Sem. Mat. Univ. Padova},
   volume={113},
   date={2005},
   pages={15--45},
}

\bib{Silhavy07}{article}{
      author={{\v{S}}ilhav{\'{y}}, Miroslav},
       title={Divergence measure vectorfields: their structure and the
  divergence theorem},
        date={2007},
   conference={
title={Mathematical modelling of bodies with complicated bulk and
  boundary behavior},
},
book={
      series={Quad. Mat.},
      volume={20},
   publisher={Dept. Math., Seconda Univ. Napoli, Caserta},
},
       pages={217--237},
}

\bib{Silhavy09}{article}{
      author={{\v{S}}ilhav{\'{y}}, Miroslav},
       title={The divergence theorem for divergence measure vectorfields on
  sets with fractal boundaries},
        date={2009},
     journal={Math. Mech. Solids},
      volume={14},
      number={5},
       pages={445--455},
}

\bib{Silhavy19}{article}{
  author={{\v{S}}ilhav{\'{y}}, Miroslav},
 journal={	
Indiana Univ. Math. J.},
date={2019},
note={To appear.},
title={The Gauss-Green theorem for bounded vectorfields with divergence measure on sets of finite perimeter},
}

\bib{Silhavy20}{article}{
      author={{\v{S}}ilhav{\'{y}}, Miroslav},
       title={Fractional vector analysis based on invariance requirements
  (critique of coordinate approaches)},
        date={2020},
        ISSN={0935-1175},
     journal={Contin. Mech. Thermodyn.},
      volume={32},
      number={1},
       pages={207--228},
}

\bib{Silhavy22}{article}{
      author={{\v{S}}ilhav{\'{y}}, Miroslav},
       title={Fractional strain tensor and fractional elasticity},
        date={2022},
     journal={J. Elast.},
}

\bib{Stein70}{book}{
   author={Stein, Elias M.},
   title={Singular integrals and differentiability properties of functions},
   series={Princeton Mathematical Series, No. 30},
   publisher={Princeton University Press, Princeton, N.J.},
   date={1970},
}

\bib{Stein93}{book}{
   author={Stein, Elias M.},
   title={Harmonic analysis: real-variable methods, orthogonality, and oscillatory integrals},
   series={Princeton Mathematical Series},
   volume={43},
   publisher={Princeton University Press, Princeton, NJ},
   date={1993},
}

\end{biblist}
\end{bibdiv}

\end{document}